\title{Elliptic fibrations on toric $K3$ hypersurfaces and mirror symmetry derived from Fano polytopes}
\author{Tomonao Matsumura and Atsuhira Nagano}
\DeclareFontFamily{U}{mathx}{}
\DeclareFontShape{U}{mathx}{m}{n}{ <-> mathx10 }{}
\DeclareSymbolFont{mathx}{U}{mathx}{m}{n}
\DeclareMathAccent{\widecheck}{0}{mathx}{"71}
\def\bigzerou{\smash{\lower1.7ex\hbox{\b 0}}}
\newtheorem{thm}{Theorem}[section]
\newtheorem{df}{Definition}[section]
\newtheorem{lem}{Lemma}[section]
\newtheorem{prop}{Proposition}[section]
\newtheorem{rem}{Remark}[section]
\newtheorem{cor}{Corollary}[section]
\newtheorem{conj}{Conjecture}[section]
\def\comment#1{{ }}
\begin{document}
\maketitle

\begin{abstract}
We determine the N\'eron-Severi lattices of $K3$ hypersurfaces with large Picard number in toric three-folds derived from Fano polytopes.
On each $K3$ surface, 
we introduce a particular elliptic fibration.
In the proof of the main theorem,
we show that the N\'eron-Severi lattice of each $K3$ surface is generated by a general fibre, sections and appropriately selected components of the singular fibres of our elliptic fibration.
Our argument gives a certain proof of  the Dolgachev conjecture  for  Fano polytopes, which is a conjecture on mirror symmetry for  $K3$ surfaces.
\end{abstract}

\footnote[0]{Keywords:  $K3$ surfaces ; elliptic fibrations ; mirror symmetry ; Fano polytopes.  }
\footnote[0]{Mathematics Subject Classification 2020:  Primary 14J28 ; Secondary  14J27, 14J33, 52B10.}
\setlength{\baselineskip}{14 pt}

\section*{Introduction}
In this paper,
we  determine the N\'eron-Severi lattices and the transcendental lattices of $K3$ surfaces
which are given as hypersurfaces in toric three-folds derived from three-dimensional Fano polytopes.
These lattices are important in geometry and number theory.
Our argument  gives a  proof of a conjecture in mirror symmetry for $K3$ surfaces.
The proof of our main theorem is based on  precise studies of  appropriate Jacobian elliptic fibrations on our $K3$ surfaces.

Mirror symmetry is a phenomenon originally discovered by physicists  studying superstring theory.
It suggests  non-trivial relationships between geometric objects
and provides a lots of interesting problems in algebraic geometry.
Especially,
many mathematicians have intensively studied Calabi-Yau hypersurfaces in toric varieties coming from reflexive polytopes since the publication of \cite{B}.
Calabi-Yau hypersurfaces in toric three-folds are $K3$ surfaces.
There is a famous conjecture  on mirror symmetry for such toric $K3$ hypersurfaces by Dolgachev \cite{D}.
This conjecture is formulated in terms of the following lattices associated with $K3$ surfaces.
The $2$-cohomology group $H^2(S,\mathbb{Z})$ of a $K3$ surface $S$ gives an even unimodular lattice $L_{K3}$ of signature $(3,19).$ 
The N\'eron-Severi lattice ${\rm NS}(S)$ is a sublattice of $H^2(S,\mathbb{Z})$ defined as $H^2(S,\mathbb{Z}) \cap H^{1,1}(S)$.
We call $\rho={\rm rank}({\rm NS} (S))$ the Picard number of $S$.
Then, ${\rm NS}(S)$ is of signature $(1,\rho-1)$.
We will identify $H^2(S,\mathbb{Z})$ with the $2$-homology group $H_2(S,\mathbb{Z})$ by the Poincar\'e duality.
Thereupon, ${\rm NS}(S)$ is regarded as the sublattice of $H_2(S,\mathbb{Z})$ generated by divisors on $S$. 
The transcendental lattice ${\rm Tr}(S)$ is the orthogonal complement of ${\rm NS}(S)$ in $H_2(S,\mathbb{Z})$.
Here, arithmetic techniques of  even lattices are powerful tools to study the above lattices  (for example, see \cite{Ni}).
Thus, mirror symmetry for toric $K3$ hypersurfaces is an interesting research theme involving researchers from various fields.

Fano polytopes are special reflexive polytopes (see Definition \ref{dfFano}).
There exists a smooth Fano $n$-fold attached to each $n$-dimensional Fano polytope.
Since Fano polytopes have nice combinatorial properties,
many researchers have  studied them (for example, see  \cite{BToric}, \cite{BToric4},  \cite{WW}, \cite{Sa} and \cite{Ob}).
In particular, three-dimensional Fano polytopes are classified into $18$ types up to ${\rm GL}_3(\mathbb{Z})$-action.
The number of vertices of such a polytope is one of $4,5,6,7$ or $8$.
See Table 1.
Here, each column of the matrix in the table  stands for  the coordinates of each vertex of the corresponding  polytope.

{\small
\begin{longtable}{lll}
\caption{Fano polytopes $P_k$ and Lattices $L_k$}\\
\hline
  $k$ & $P_k$  & $L_k$ \vspace{1mm}  \\
  \hline
  \endhead
\vspace{1mm}$1$ & $ \left(
\begin{array}{cccccc}
1 & 0 & 0 & -1 \\
0 & 1 & 0 & -1  \\
0 & 0 & 1 & -1  \\
\end{array}\right)$ & $(4)$ \vspace{1mm}\\
\vspace{1mm}$2$ & $ \left(
\begin{array}{cccccc}
1 & 0 & 0 & -1 & 0  \\
0 & 1 & 0 & -1 & 0  \\
0 & 0 & 1 & 0 & -1  \\
\end{array}\right)$ &$\begin{pmatrix} 0 & 3 \\ 3 & 2 \end{pmatrix}$ \vspace{1mm}\\
\vspace{1mm}$3$ &
$ \left(
\begin{array}{cccccc}
1 & 0 & 0 & 0 & -1  \\
0 & 1 & 0 & 0 & -1  \\
0 & 0 & 1 & -1 & -1  \\
\end{array}\right)$ & $\begin{pmatrix} 2 & 2 \\ 2 & -2 \end{pmatrix}$ \vspace{1mm}\\
\vspace{1mm}$4$ &$ \left(
\begin{array}{cccccc}
1 & 0 & 0 & 0 & -1  \\
0 & 1 & 0 & 0 & -1  \\
0 & 0 & 1 & -1 & -2  \\
\end{array}\right)$ & 
$\begin{pmatrix} 2 & 1 \\ 1 & -2 \end{pmatrix}$ \vspace{1mm}\\
\vspace{1mm}$5$ &
$ \left(
\begin{array}{cccccc}
1 & 0 & 0 & -1 & -1  \\
0 & 1 & 0 & -1 & -1  \\
0 & 0 & 1 & 0 & -1  \\
\end{array}\right)$ & $\begin{pmatrix} 0 & 3 \\ 3 & -2 \end{pmatrix}$\vspace{1mm} \\
\vspace{1mm}$6$ &$ \left(
\begin{array}{cccccc}
1 & 0 & 0 & 0 & 0 & -1 \\
0 & 1 & 0 & 0 & -1 & 0 \\
0 & 0 & 1 & -1 & 0 & 0 \\
\end{array}\right)$ & $\left(\begin{array}{ccc}
0 & 2 & 2 \\
2 & 0 & 2 \\
2 & 2 & 0 \\
\end{array}\right) $\vspace{1mm} \\
\vspace{1mm}$7$ &$\left(
\begin{array}{cccccc}
1 & 0 & 0 & 0 & 0 & -1 \\
0 & 1 & 0 & 0 & -1 & 0 \\
0 & 0 & 1 & -1 & -1 & -1 \\
\end{array}\right) $ & $\left(\begin{array}{ccc}
-2 & 1 & 1 \\
1 & 0 & 2 \\
1 & 2 & 0 \\
\end{array}\right)$ \vspace{1mm}\\
\vspace{1mm}$8$ &$\left(
\begin{array}{cccccc}
1 & 0 & 0 & 0 & 0 & -1 \\
0 & 1 & 0 & 0 & -1 & 0 \\
0 & 0 & 1 & -1 & 1 & -1 \\
\end{array}\right)$ &$\left(\begin{array}{ccc}
-2 & 1 & 3 \\
1 & 0 & 2 \\
3 & 2 & 0 \\
\end{array}\right) $ \vspace{1mm}\\
\vspace{1mm}$9$ & $\left(
\begin{array}{cccccc}
1 & 0 & 0 & 0 & 0 & -1 \\
0 & 1 & 0 & 0 & -1 & -1 \\
0 & 0 & 1 & -1 & 0 & 0 \\
\end{array}\right)$ & $\left(\begin{array}{ccc}
0 & 1 & 2 \\
1 & -2 & 2 \\
2 & 2 & 0 \\
\end{array}\right)$ \vspace{1mm}\\
\vspace{1mm}$10$ & $\left(
\begin{array}{cccccc}
1 & 0 & 0 & 0 & 0 & -1 \\
0 & 1 & 0 & 0 & -1 & -1 \\
0 & 0 & 1 & -1 & -1 & -1 \\
\end{array}\right)$ & $\left(\begin{array}{ccc}
-2 & 1 & 1 \\
1 & -2 & 2 \\
1 & 2 & 0 \\
\end{array}\right)$ \vspace{1mm}\\
\vspace{1mm}$11$ & $\left(
\begin{array}{cccccc}
1 & 0 & 0 & 0 & 0 & -1 \\
0 & 1 & 0 & 0 & 1 & -1 \\
0 & 0 & 1 & -1 & -1 & -1 \\
\end{array}\right)$ & $\left(\begin{array}{ccc}
-2 & 1 & 1 \\
1 & -2 & 1 \\
1 & 1 & 2 \\
\end{array}\right)$ \vspace{1mm}\\
\vspace{1mm}$12$ & $\left(
\begin{array}{cccccc}
1 & 0 & 0 & 0 & 0 & -1 \\
0 & 1 & 0 & 0 & 1 & -1 \\
0 & 0 & 1 & -1 & -1 & 0 \\
\end{array}\right)$ & $\left(\begin{array}{ccc}
-2 & 2 & 2 \\
2 & -2 & 1 \\
2 & 1 & 2 \\
\end{array}\right)$ \vspace{1mm}\\
\vspace{1mm}$13$ & $\left(
\begin{array}{ccccccc}
1 & 0 & 0 & 0 & 0 & -1 & -1\\
0 & 1 & 0 & 0 & -1 & 0 & -1\\
0 & 0 & 1 & -1 & 0 & 0 & 0\\
\end{array}\right)$ & $\left(\begin{array}{cccc}
0 & 1 & 1 & 1 \\
1 & -2 & 0 & 2 \\
1 & 0 & -2 & 2 \\
1 & 2 & 2 & -2 \\
\end{array}\right)$ \vspace{1mm}\\
\vspace{1mm}$14$ & $\left(
\begin{array}{ccccccc}
1 & 0 & 0 & -1 & 0 & -1 & -1\\
0 & 1 & 0 & 0 & -1 & 0 & -1\\
0 & 0 & 1 & -1 & 0 & 0 & 0\\
\end{array}\right)$ & $\left(\begin{array}{cccc}
0 & 1 & 1 & 1 \\
1 & -2 & 0 & 2 \\
1 & 0 & -2 & 1 \\
1 & 2 & 1 & -2 \\
\end{array}\right) $ \vspace{1mm}\\
\vspace{1mm}$15$ & $\left(
\begin{array}{ccccccc}
1 & 0 & 0 & 1 & 0 & -1 & -1\\
0 & 1 & 0 & 0 & -1 & 0 & -1\\
0 & 0 & 1 & -1 & 0 & 0 & 0\\
\end{array}\right)$ &$\left(\begin{array}{cccc}
0 & 1 & 1 & 1 \\
1 & -2 & 0 & 2 \\
1 & 0 & -2 & 3 \\
1 & 2 & 3 & -2 \\
\end{array}\right)$ \vspace{1mm}\\
\vspace{1mm}$16$ & $\left(
\begin{array}{ccccccc}
1 & 0 & 0 & -1 & 0 & -1 & -1\\
0 & 1 & 0 & -1 & -1 & 0 & -1\\
0 & 0 & 1 & -1 & 0 & 0 & 0\\
\end{array}\right)$ & $\left(\begin{array}{cccc}
0 & 1 & 1 & 1 \\
1 & -2 & 0 & 1 \\
1 & 0 & -2 & 1 \\
1 & 1 & 1 & -2 \\
\end{array}\right)$ \vspace{1mm}\\
\vspace{1mm}$17$ & $\left(
\begin{array}{cccccccc}
1 & 0 & 0 & 0 & 0 & -1 & 1 & -1 \\
0 & 1 & 0 & 0 & -1 & 0 & -1 & 1 \\
0 & 0 & 1 & -1 & 0 & 0 & 0 & 0 \\
\end{array}\right)$ & $\left(\begin{array}{ccccc}
0 & 1 & 1 & 1 & 1 \\
1 & -2 & 2 & 2 & 0 \\
1 & 2 & -2 & 0 & 2 \\
1 & 2 & 0 & -2 & 0 \\
1 & 0 & 2 & 0 & -2 \\
\end{array}\right)$ \vspace{1mm}\\
$18$ & $\left(
\begin{array}{cccccccc}
1 & 0 & 0 & 1 & 0 & -1 & 1 & -1 \\
0 & 1 & 0 & -1 & -1 & 0 & -1 & 1 \\
0 & 0 & 1 & -1 & 0 & 0 & 0 & 0 \\
\end{array}\right)$ & $\left(\begin{array}{ccccc}
0 & 1 & 1 & 1 & 1 \\
1 & -2 & 2 & 1 & 0 \\
1 & 2 & -2 & 0 & 3 \\
1 & 1 & 0 & -2 & 0 \\
1 & 0 & 3 & 0 & -2 \\
\end{array}\right)$ \vspace{1mm}\\
\hline
\end{longtable} 
}

From a three-dimensional Fano polytope $P$, we obtain another polytope $P^\circ$, which is called the polar dual.
The duality between $P$ and $P^\circ$ derives two different families of $K3$ surfaces.
Namely, we have the family of $S_{P^\circ}$ with small Picard number 
and the family of $S_P$ with large Picard number.
Let $S_P$ and $S_{P^\circ}$ are very general members.
Dolgachev \cite{D} conjectures that $(S_P , S_{P^\circ})$ gives a mirror pair.
This means that
 there is  a  relationship between the lattices of $S_P$ and those of $S_{P^\circ}$ (for detail, see Section 1).
Here,
the $K3$ surface $S_{P^\circ}$ is a hypersurface in a smooth toric Fano three-fold.
By applying a result in algebraic geometry (see Proposition \ref{PropMoishezon}),
one can determine ${\rm NS}(S_{P^\circ})$ systematically.
Indeed, Koike \cite{K2} and Mase \cite{Mase} determine the N\'eron-Severi lattice  of $S_{P^\circ}$ for every three-dimensional Fano polytope $P$ (see Proposition \ref{PropLatticeDual}).
On the other hand,
to the best of the authors' knowledge,
it is not easy to determine the N\'eron-Severi lattice of $S_P$ for every $P$,
because its Picard number is large and its ambient toric variety has singularities.

The  subjects of the present paper are the $K3$ surfaces $S_P$.
Here, we note that 
Narumiya and Shiga \cite{NS} determine the lattice structure of $S_P$
if the number of vertices of  $P$ is $4$.
Also, if the number of vertices of $P$ is $5$,
the second author \cite{Na} determines the lattice structure of $S_P$ (see also \cite{I} and \cite{IIT}).
In this paper, 
we will determine the  lattice structure of  $S_{P}$ for every three-dimensional Fano polytope $P$ with $6,7$ or $8$ vertices.
Our main  theorem is given as follows.

\begin{thm}\label{ThmMain}
Let $P_k$ ($k\in \{1,\ldots,18\}$) be the Fano polytope in Table 1.
Then, the intersection matrix of the transcendental lattice ${\rm Tr}(S_{P_k})$ is isometric to $U\oplus L_k$,
where $U$ is the hyperbolic lattice of rank two
and $L_k$ is the lattice in Table 1.
\end{thm}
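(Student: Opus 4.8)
The plan is to realise each $S_{P_k}$ as a Jacobian elliptic $K3$ surface, to compute ${\rm NS}(S_{P_k})$ explicitly from the fibres and sections of this fibration, and then to obtain ${\rm Tr}(S_{P_k})$ as the orthogonal complement of ${\rm NS}(S_{P_k})$ inside the unimodular lattice $L_{K3}$. The splitting off of the hyperbolic plane $U$ is exactly the shape predicted by Dolgachev-Nikulin lattice mirror symmetry, so the statement is in effect Dolgachev's conjecture for the polytopes $P_k$, with $L_k$ playing the role of the Koike-Mase lattice ${\rm NS}(S_{P_k^\circ})$ of the mirror.

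First I would use the toric data of $P_k$ to write down an explicit elliptic pencil $\pi\colon S_{P_k}\to\mathbb{P}^1$ with a section, arising from a projection of the ambient toric three-fold; this yields a Weierstrass-type model $y^2=x^3+a(t)x+b(t)$ whose coefficients are read off from the defining Laurent polynomial of $S_{P_k}$. Running Tate's algorithm on the discriminant $\Delta(t)=4a(t)^3+27b(t)^2$ I would determine the Kodaira type of every singular fibre, and hence the negative-definite root lattice $R=\bigoplus_v R_v$ spanned by the non-identity components of the reducible fibres. Together with the classes $F$ of a general fibre and $O$ of the zero section, which span a copy of $U$, these produce the trivial lattice
\[
T=U\oplus R\subseteq{\rm NS}(S_{P_k}).
\]

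Next I would exhibit one explicit section of infinite order and, via the Shioda-Tate formula
\[
\rho(S_{P_k})=2+\mathrm{rank}(R)+\mathrm{rank}\,MW(\pi),
\]
confirm that $\mathrm{rank}\,MW(\pi)=1$; this forces $\mathrm{rank}\,{\rm Tr}(S_{P_k})=22-\rho=\mathrm{rank}(L_k)+2=\mathrm{rank}(U\oplus L_k)$. The crux is then to show that $F$, $O$, the selected fibre components, and this extra section generate ${\rm NS}(S_{P_k})$ over $\mathbb{Z}$, not merely up to finite index: a priori the height pairing on $MW$ or the discriminant group of $R$ could force a proper overlattice. I would control this by computing the Mordell-Weil height pairing and the discriminant form of $T$, exhibiting integral classes that realise the required glue vectors, and checking that the determinant of the resulting explicit Gram matrix equals the expected discriminant of ${\rm NS}(S_{P_k})$. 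This saturation step is where the toric origin of $S_{P_k}$ is indispensable and where the eighteen cases genuinely differ; I expect it to be the main obstacle.

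Finally I would pass to the transcendental lattice. With ${\rm NS}(S_{P_k})$ and its discriminant form $q_{\rm NS}$ in hand, a primitive embedding into $L_{K3}=U^{\oplus 3}\oplus E_8(-1)^{\oplus 2}$ gives an orthogonal complement of signature $(2,20-\rho)$ and discriminant form $-q_{\rm NS}$ by Nikulin's theory. To identify this complement with $U\oplus L_k$ it suffices to verify that $U\oplus L_k$ has precisely these invariants; since $U$ is unimodular one has $q_{U\oplus L_k}=q_{L_k}$, and the mirror relation $q_{L_k}=-q_{\rm NS}$ for the Koike-Mase lattice is checked directly from Table 1. Uniqueness of the indefinite lattice in its genus---or, in the down-to-earth spirit, an explicit choice of generators of the complement---then yields the isometry ${\rm Tr}(S_{P_k})\cong U\oplus L_k$.
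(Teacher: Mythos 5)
Your overall architecture is the same as the paper's (explicit Jacobian fibration, trivial lattice plus a section, discriminant forms, Nikulin uniqueness for the complement), but there is a genuine gap at the step you pass over most quickly: you cannot ``via the Shioda--Tate formula confirm that $\mathrm{rank}\,MW(\pi)=1$.'' Exhibiting $F$, $O$, the fibre components and one infinite-order section only gives the lower bound $\rho(S_{P_k})\geq 2+\mathrm{rank}(R)+1=23-\ell_k$; Shioda--Tate merely re-expresses $\rho$ in terms of the Mordell--Weil rank, so using it to pin down either quantity is circular. The reverse inequality $\rho\leq 23-\ell_k$ for a \emph{generic} member is precisely the hard genericity statement, and the paper devotes all of Section 3 to it: it marks the family by the evident lattice $E_k$, proves that the local period mapping is injective --- Lemma \ref{LemPsi} shows via the $j$-invariant and the configuration of singular fibres that isomorphic elliptic surfaces in the family force $\lambda^1=\lambda^2$, and Lemma \ref{LemSEquiv} uses Riemann--Roch to recover $\pi_k$ from the class of $F$, so that $S$-marked equivalence yields an isomorphism of elliptic surfaces --- and then concludes $\rho=23-\ell_k$ by local Torelli (Theorem \ref{ThmPic}): were $\rho$ larger, the period image would have dimension smaller than $\ell_k-3$, contradicting injectivity. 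Your proposal contains no substitute for this upper bound (a Batyrev-type Picard number formula for the resolved toric hypersurface, or an effective-parametrization count, would also do, but neither is invoked).

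The saturation step as you describe it also does not close, for a related reason: checking that the Gram determinant of your candidate lattice ``equals the expected discriminant of ${\rm NS}(S_{P_k})$'' is vacuous, since $\det{\rm NS}(S_{P_k})$ is not known independently --- an overlattice of index $d$ changes the determinant by $d^2$, and nothing in your plan excludes $d>1$. The paper's resolution avoids height pairings and glue vectors entirely: it computes $q_{E_k}\simeq -q_{L_k}$ by exhibiting explicit generators of the discriminant groups (Lemma \ref{LemFinAbel}, Tables 11--12), notes $L_k^\perp\simeq U\oplus\widecheck{L}_k$ with $\widecheck{L}_k$ primitive in $L_{K3}$ (Remark \ref{RemCheck}), and applies Morrison's lemma (Proposition \ref{PropMor}) together with the rank equality of Theorem \ref{ThmPic} to force $E_k={\rm NS}(S_k)$ (Theorem \ref{ThmNSEvident}); the Mordell--Weil statements then come out as consequences (Theorem \ref{ThmMW}), not inputs. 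Your height-pairing route is viable in principle --- it is essentially the first author's alternative proof in \cite{M1}, mentioned at the end of Section 5 --- but it too needs the Picard number bound first; as written, both of your middle steps presuppose the quantity they are meant to establish. Your final paragraph (signature and discriminant form of the complement, uniqueness in the genus by Nikulin) is correct and matches Corollary \ref{CorTr}.
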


Indeed, 
we will determine the structure of ${\rm NS} (S_{P_k})$ $(k\in \{6,\ldots,18\})$ in Theorem \ref{ThmNSEvident}.
Theorem \ref{ThmMain} immediately follows from  Theorem \ref{ThmNSEvident} (see Corollary \ref{CorTr}).
Also, this main theorem establishes that  the Dolgachev conjecture holds for Fano polytopes.

In our proof, it is important to study divisors on  $S_P$ with large Picard number.
For the cases of $S_P$,
unlike the cases of $S_{P^\circ}$,
it does not appears to be enough to apply  techniques of toric divisors.
Therefore,
our mathematical argument is much different to those of \cite{K2} or \cite{Mase}.
We will explicitly take a divisor  $F$ on each $S_P$ whose self intersection number is zero.  
Then, 
we obtain an elliptic fibration $\pi_P: S_P \rightarrow \mathbb{P}^1 (\mathbb{C})$
with a general fibre $F$.
Each  fibration $\pi_P$ has  sections $\mathbb{P}^1(\mathbb{C}) \rightarrow S_P$.
Such a fibration  is called a Jacobian elliptic fibration.
We will make full use of arithmetic properties of even lattices and  Mordell-Weil groups  in order to determine ${\rm NS}(S_P)$ in Section 4 and 5.

As a matter of fact,
our proof of the main theorem is heavily due to nice properties of the fibration $\pi_P$.
Let us summarize our argument.
For each $P$, take a very general member $S_P$ of the family.
We will introduce  a sublattice $E_P$ of ${\rm NS}(S_P)$ generated by a general fibre, sections and  appropriately selected irreducible components of the singular fibres of $\pi_P$ (see Table 5 and 6). 
The lattice $E_P$ will be called the evident lattice attached to $\pi_P.$
The absolute value $|\det (E_P)|$  is equal to $|\det({\rm NS}(S_{P^\circ}))|$,
which is already calculated in \cite{K2} or \cite{Mase}.
Moreover,
our lattice $E_P$ defines  a period mapping for the family of $S_P$  as in Section 3.
We will see  the Picard number of $S_P$ is equal to the rank of $E_P$ (Theorem \ref{ThmPic}). 
We will study the invariant of the even lattice $E_P$ in Section 4.
Furthermore, in Section 5, we will determine the structure of the Mordell-Weil group ${\rm MW}(\pi_P,O)$ associated with $\pi_P$ for each Fano polytope $P$.
Here, $O $ is the section of $\pi_P$ corresponding to the identity element of the group.
Then, we will prove that $E_P$ is equal to ${\rm NS}(S_P)$ (Theorem \ref{ThmNSEvident}) and that the Dolgachev conjecture holds for $P$ (Corollary \ref{CorTr}).
We note that each $S_P$  has  several Jacobian elliptic fibrations.
However,
not every Jacobian elliptic fibration is effective to determine the lattice structure in question.
Our fibration $\pi_P$ initially  appears in \cite{M1} and \cite{M2}.
It  has profitable properties for our purpose.

Here, we mention  Rohsiepe's work in 2004.
In the preprint \cite{R}, he argued that he resolved the Dolgachev conjecture affirmatively by  computer calculations.
In particular, he wrote down an outline of an algorithm for his calculations.
However, unfortunately, 
it is difficult for us to verify his mathematical argument accurately as of the writing of the preprint.  
To the best of the authors' knowledge, the details are obscured in the coarse description of the algorithm in \cite{R} and  a website where he uploaded data of his calculations does not seem to have been working.

On the other hand,
our  proof of Theorem \ref{ThmMain} is based on detailed investigations of  the particular  elliptic fibrations.
They enable us to study the period mappings and the moduli spaces of the marked $K3$ surfaces polarized by our evident lattices via an argument similar to \cite{NaP} Section 2.
The authors expect that $K3 $ surfaces coming from Fano polytopes  are interesting objects of research with various properties.
Indeed, the second author proved in  \cite{NaC} that the inverse correspondence of the period mapping for the family of $K3$ surfaces $S_{P_4}$, which is  derived from the Fano polytope $P_4$ with $5$ vertices, gives a tuple of Hilbert modular forms of two variables. 
Thus,  our results are connected to future applications of  $K3$ surfaces with large Picard number.

\section{Mirror pairs of toric $K3$ hypersurfaces from Fano polytopes}

\subsection{Preliminaries and notations}

We start this paper with surveying the construction of toric varieties from reflexive polytopes.
For detailed arguments or proofs, see  \cite{B}, \cite{Od} and \cite{CLS}.

Let $M$ be a free abelian group of rank $n$.
We identify $M$ with $\mathbb{Z}^n.$
Let $N$ be the dual of $M$:
$N={\rm Hom}(M,\mathbb{Z}).$
We set $M_\mathbb{R}=M\otimes_\mathbb{Z} \mathbb{R}$ and $N_\mathbb{R}=N\otimes_\mathbb{Z} \mathbb{R}.$
The non-degenerate pairing $N_\mathbb{R} \times M_\mathbb{R}\rightarrow \mathbb{R}$ is denoted by
$\langle \nu,m\rangle$ for $\nu\in N_\mathbb{R}$ and $m\in M_\mathbb{R}$.
In this paper,
$m\in M_\mathbb{R}$ ($n\in N_\mathbb{R}$, resp.) will be usually regarded as an $n$-component column (row, resp.) vector 
and
$\langle \nu,m\rangle$ is regarded as the canonical inner product of $\mathbb{R}^n$.
Let $\mathbb{T}$ be a torus: $\mathbb{T} \simeq (\mathbb{C}^\times)^n$.
For $m=(m_1,\ldots, m_n)\in M$,
let $\chi^m$ be a character $\mathbb{T} \rightarrow \mathbb{C}^\times$ defined by
$\chi^m (t_1,\ldots, t_n) = t_1^{m_1}\cdots t_n^{m_n}$.

Let  $P \subset M_\mathbb{R}$ be an  $n$-dimensional  convex lattice polytope.
Here, a lattice polytope means a polytope whose vertices are integral.
Let $\mathfrak{F}$ be a face of $P$ and $\mathfrak{F}_0$ be a facet which contains $\mathfrak{F}$.
We can take the unique primitive vector $n_{\mathfrak{F}_0} \in N$ which gives the normal vector of $\mathfrak{F}_0$
 pointing towards the interior of $P$.
Let $\sigma_{\mathfrak{F}}\subset N_\mathbb{R}$ be the cone generated by $n_{\mathfrak{F}_0}$ for all facets  $\mathfrak{F}_0$ satisfying   $\mathfrak{F} \subset \mathfrak{F}_0.$
Then,
$
\Sigma(P)=\{\sigma_\mathfrak{F} \mid \mathfrak{F} \text{ is a face of } P\}
$
gives a complete rational fan in $N_\mathbb{R}$.
The fan $\Sigma(P)$ determines an $n$-dimensional toric variety $X_{\Sigma(P)}$,
which will be shortly denoted by  $X_P$.
In this paper,
we call $X_P$ the toric 
variety determined by  $P$.

If an $n$-dimensional convex polytope $P\subset M_\mathbb{R}$ contains the origin as an interior point, 
the polar dual $P^\circ \subset N_\mathbb{R}$ of $P$ is defined by $\{\nu\in N_\mathbb{R}\mid\langle \nu,m\rangle \geq -1 \text{ for all } m\in P \}$.
We can see that 
$P^\circ$ is also  an $n$-dimensional convex polytope and contains the origin as the interior point.
Moreover, the relation $(P^\circ)^\circ =P$ holds.
If a convex lattice polytope $P$ contains the origin $0\in M$ in its interior and $P^\circ$ gives a lattice polytope,
then $P$ is called a reflexive polytope. 
Batyrev \cite{B} proves that 
the $n$-dimensional toric variety $X_P$ is a Fano variety with Gorenstein singularities 
whose anti-canonical sections are given by
\begin{align}\label{LatticePtsGenerators}
H^0(X_P, \mathcal{O}_{X_P} (-K_{X_P}) ) =\bigoplus_{m\in P\cap M} \mathbb{C} \chi^m.
\end{align}
Then, a general member of the linear system $|-K_{X_P}|$ defines an $(n-1)$-dimensional Calabi-Yau hypersurface in $X_P$ as in \cite{B} Section 3.

Let $P\subset M_\mathbb{R}$ be a reflexive polytope.
For a face $\mathfrak{F}\subset P^\circ \subset N_\mathbb{R}$, let $\mathbb{R}_{\geq 0} \mathfrak{F}$ be the cone $\{\lambda x\mid x\in \mathfrak{F}, \lambda \in \mathbb{R}_{\geq 0}  \}$.   
Then, the set $\Sigma[P^\circ]$ of $\mathbb{R}_{\geq 0} \mathfrak{F}$ for all faces $\mathfrak{F}$ of $P^\circ$ gives a complete fan in $N_\mathbb{R}$ such that
$X_{\Sigma[P^\circ]} = X_{\Sigma(P)}=X_P$
(\cite{B} Proposition 2.1.1, Corollary 4.1.11).

We introduce Fano polytopes as follows.
This terminology is due to   \cite{BToric4}, \cite{Sa}  and \cite{Ob}.

\begin{df}\label{dfFano}
An $n$-dimensional convex  integral polytope $P\subset M_\mathbb{R}$ is called a Fano polytope,
if $P$ satisfies the following conditions:
\begin{itemize}

\item[(i)] $0 \in M_\mathbb{R}$ is a inner point of $P$,

\item[(ii)] the vertices of every facet of $P$ give a $\mathbb{Z}$-basis of $M$.

\end{itemize}
\end{df}

If $P$ is a Fano polytope,
$X_{P^\circ}=X_{\Sigma(P^\circ)}=X_{\Sigma[P]}$ is a smooth toric Fano $n$-fold.

\subsection{Dolgachev conjecture for toric $K3$ hypersurfaces}

In this paper, we will study cases of $n=3$.
In such cases, we will consider $K3$ hypersurfaces corresponding to the anti-canonical sections of three-dimensional toric varieties. 
We will study  lattice theoretic properties of $K3$ surfaces.
In this paper, let $U$ be the unimodular hyperbolic lattice of rank two.
 Also, let $E_8 (-1)$ be  the even unimodular lattice of signature $(0,8)$.
 Then, the $K3$ lattice $L_{K3}$, which is isomorphic to the $2$-homology group of a $K3$ surface, is isometric to the even unimodular lattice
 $II_{3,19}\simeq U^{\oplus 3}\oplus   E_8(-1)^{\oplus 2}$ of signature $(3,19)$. 
For a $K3$ surface $S$, its N\'eron-Severi lattice is denoted by ${\rm NS}(S)$ and its transcendental lattice is denoted by ${\rm Tr}(S).$

For a three-dimensional reflexive polytope $P$,
a very general member  of $|-K_{X_P}|$ defines a $K3$ surface $S_P$.
Also, we obtain another $K3$ surface $S_{P^\circ}$ from a very general member of $|-K_{X_{P^\circ}}|$.
Let $L_P$ ($L_{P^\circ}$, resp.) be the lattice in $L_{K3}$ which is isometric to the  sublattice  generated by the elements of $\iota^* ({\rm NS}(X_P))$ ($\iota_\circ ^* ({\rm NS}(X_{P^\circ}))$, resp.), 
where $\iota^* : H^2(X_P,\mathbb{Z}) \rightarrow H^2(S_P,\mathbb{Z})$ ($\iota_\circ^* : H^2(X_{P^\circ},\mathbb{Z}) \rightarrow H^2(S_{P^\circ},\mathbb{Z})$, resp.)
is the pull back of $\iota: S_P \hookrightarrow X_P$ ($\iota_\circ : S_{P^\circ} \hookrightarrow X_{P^\circ}$, resp.).
Let $L_P^\perp$ ($L_{P^\circ}^\perp $, resp.) be the orthogonal complement of $L_P$ ($L_{P^\circ}$, resp.) in $L_{K3}$.

The following conjecture is  famous and important to study mirror symmetry for toric $K3$ hypersurfaces.

\begin{conj}(The Dolgachev conjecture, \cite{D} Conjecture (8.6))
The notation being as above,
the lattice  $L_{P^\circ}^\perp$ has an orthogonal decomposition
$ L_{P^\circ}^\perp \simeq U \oplus \widecheck{L}_{P^\circ} $ with the summand $U$
and there is a primitive embedding $L_P \hookrightarrow \widecheck{L}_{P^\circ}$.
Moreover, $L_P = \widecheck{L}_{P^\circ}$ holds if and only if $L_{P^\circ} \simeq {\rm NS}(S_{P^\circ})$.
\end{conj}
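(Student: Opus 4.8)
The plan is to deduce the conjecture for each Fano polytope $P=P_k$ by a purely lattice-theoretic assembly, taking as given Theorem \ref{ThmNSEvident} and Corollary \ref{CorTr} (which determine $\mathrm{NS}(S_P)$ and $\mathrm{Tr}(S_P)\simeq U\oplus L_k$) together with Proposition \ref{PropLatticeDual} of Koike \cite{K2} and Mase \cite{Mase} (which determines $\mathrm{NS}(S_{P^\circ})$). First I would record that, $P$ being Fano, the ambient $X_{P^\circ}$ is a smooth toric Fano threefold, so the toric divisor classes generate the Picard lattice of a generic anticanonical $S_{P^\circ}$; hence $L_{P^\circ}=\mathrm{NS}(S_{P^\circ})$. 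This already settles the right-hand side of the biconditional and pins down the lattice to be analysed, namely $L_{P^\circ}^{\perp}=\mathrm{NS}(S_{P^\circ})^{\perp}=\mathrm{Tr}(S_{P^\circ})$ inside $L_{K3}$.

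The heart of the argument is the identity $\mathrm{Tr}(S_{P^\circ})\simeq U\oplus \mathrm{NS}(S_P)$, which I would establish through discriminant forms. Since $L_{K3}$ is even unimodular and $\mathrm{NS}$ and $\mathrm{Tr}$ are mutually orthogonal primitive sublattices, one has $q_{\mathrm{Tr}(S_{P^\circ})}=-q_{\mathrm{NS}(S_{P^\circ})}$ and $q_{\mathrm{NS}(S_P)}=-q_{\mathrm{Tr}(S_P)}$. By Corollary \ref{CorTr}, $\mathrm{Tr}(S_P)\simeq U\oplus L_k$, whence $q_{\mathrm{NS}(S_P)}=-q_{L_k}$; comparing the two independent computations (the coincidence of Koike--Mase's $\mathrm{NS}(S_{P^\circ})$ with the lattice $L_k$ of Table 1, consistent with $|\det E_P|=|\det\mathrm{NS}(S_{P^\circ})|$ noted after Theorem \ref{ThmMain}) gives $q_{\mathrm{NS}(S_{P^\circ})}=q_{L_k}$, so $q_{\mathrm{Tr}(S_{P^\circ})}=-q_{L_k}$. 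As $U$ is unimodular, $q_{U\oplus\mathrm{NS}(S_P)}=q_{\mathrm{NS}(S_P)}=-q_{L_k}$ as well, and a signature count shows both lattices have signature $(2,20-\operatorname{rank}L_k)$. Both are even and indefinite of rank at least $\ell(q)+2$ (here $\operatorname{rank}\ge 17$ while $\ell\le\operatorname{rank}L_k\le 5$), so Nikulin's uniqueness theorem for such lattices \cite{Ni} forces an isometry $\mathrm{Tr}(S_{P^\circ})\simeq U\oplus\mathrm{NS}(S_P)$. This simultaneously produces the orthogonal decomposition $L_{P^\circ}^{\perp}\simeq U\oplus\widecheck{L}_{P^\circ}$ and identifies $\widecheck{L}_{P^\circ}\simeq\mathrm{NS}(S_P)$.

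It remains to verify the primitive embedding and the biconditional. For Fano $P$ I would show $L_P=\mathrm{NS}(S_P)$ directly: the toric lattice $L_P$, read off from the fan of the maximal crepant resolution of $X_P$, is a full-rank sublattice of $\mathrm{NS}(S_P)=\widecheck{L}_{P^\circ}$, and comparing $|\det L_P|$ with $|\det\mathrm{NS}(S_P)|=|\det L_k|$ shows the index is $1$, i.e. the non-toric correction to the Picard lattice of $S_P$ vanishes. Hence $L_P=\widecheck{L}_{P^\circ}$, the embedding $L_P\hookrightarrow\widecheck{L}_{P^\circ}$ is an isometry and is trivially primitive, and for every Fano polytope both $L_{P^\circ}=\mathrm{NS}(S_{P^\circ})$ and $L_P=\widecheck{L}_{P^\circ}$ hold; the biconditional is therefore valid with both sides true, and the Dolgachev conjecture follows for $P_1,\dots,P_{18}$. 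The steps I expect to be most delicate are the two bridges between the ambient toric data and the intrinsic lattices, namely checking $q_{\mathrm{NS}(S_{P^\circ})}=q_{L_k}$ by reconciling the two independent computations, and proving $L_P=\mathrm{NS}(S_P)$ case by case, while the recurring technical point is confirming the rank hypothesis $\operatorname{rank}\ge\ell+2$ needed to invoke Nikulin's uniqueness for each of the eighteen polytopes.
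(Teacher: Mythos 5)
Your assembly is correct, and its skeleton coincides with the paper's own treatment: the paper also derives the conjecture for Fano polytopes by combining Proposition \ref{PropLatticeDual} (Koike--Mase), Nikulin's lattice arithmetic, and its main results (Theorem \ref{ThmNSEvident}, Corollary \ref{CorTr}), reducing everything to the relation (\ref{mirrorK3}). You differ in two local steps, both worth noting. First, where the paper checks the splitting $L_k^\perp \simeq U \oplus \widecheck{L}_k$ case by case (Remark \ref{RemCheck}) and proves ${\rm Tr}(S_P)\simeq U\oplus L_{P^\circ}$, you instead apply Proposition \ref{PropQI} directly to ${\rm Tr}(S_{P^\circ})$ versus $U\oplus {\rm NS}(S_P)$ --- which is literally the lattice $L_{P^\circ}^{\perp}$ the conjecture speaks about; your signature and length counts (${\rm rank}=25-\ell_k\geq 17$, $l(\mathscr{A})\leq \ell_k-3\leq 5$) are correct, and this route is arguably cleaner, since it identifies $\widecheck{L}_{P^\circ}\simeq {\rm NS}(S_P)$ intrinsically rather than by inspection. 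Second, and more substantively, the paper obtains $L_{P_k}\simeq {\rm NS}(S_{P_k})$ by citing Kobayashi's duality theorem (\cite{Kob}, Corollary 4.3.6), whereas you propose to prove it by a full-rank-plus-determinant argument. That plan can be made to work, but two points need discharging: the full-rank claim is not automatic --- it holds precisely because $P$ is Fano, so that its facets are unimodular simplices, the edges of $P$ carry no interior lattice points, and hence the non-toric correction term in Batyrev's formula for the Picard lattice of $S_P$ vanishes, an argument you should state explicitly; and the equality $|\det L_P|=|\det L_k|$ is an eighteen-case toric intersection computation that you flag but do not perform, and which the appeal to \cite{Kob} is designed precisely to avoid (note that the determinant coincidence recorded in Proposition \ref{PropEvident} concerns $E_k$, not $L_P$, so it cannot be quoted here without circularity). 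With those two items supplied, your proof is complete and marginally more self-contained than the paper's.
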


We will study the cases for  three-dimensional Fano polytopes.
As in Table 1,
three-dimensional Fano polytopes are classified into 18 types  up to the action of ${\rm GL}_3(\mathbb{Z})$ by \cite{BToric} and \cite{WW}.
Suppose $P$ is a three-dimensional Fano polytope.
One can determine the structure of the N\'eron-Severi lattice ${\rm NS}(S_{P^\circ})$ of the $K3$ surface $S_{P^\circ}$
on the basis of the intersection theory for  divisors of toric manifolds.
We already have the following fact.

\begin{prop} (\cite{K2} or \cite{Mase})\label{PropLatticeDual}
 For $k\in \{1,\ldots, 18\},$
 let $P_k$ be the Fano polytope of Table 1. 
 Then,
the Neron-S\'everi lattice ${\rm NS}(S_{P_k^\circ})$ is isometric to the lattice $L_k$ of signature $(1,\ell_k-4)$ of Table 1,
where $L_k$ is equal to the lattice $\iota_\circ^* ({\rm NS} (X_{P_k^\circ}))$.
\end{prop}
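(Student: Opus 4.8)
The plan is to exploit the fact that the ambient space $X_{P_k^\circ}=X_{\Sigma[P_k]}$ is a \emph{smooth} toric Fano three-fold, whose defining fan is the fan over the faces of $P_k$; in particular its rays are generated by the vertices $v_1,\dots,v_{\ell_k}$ of $P_k$, which are primitive by the Fano condition (ii). Writing $D_1,\dots,D_{\ell_k}$ for the corresponding torus-invariant prime divisors, smoothness and completeness give ${\rm NS}(X_{P_k^\circ})={\rm Pic}(X_{P_k^\circ})=H^2(X_{P_k^\circ},\mathbb{Z})$ via the first Chern class, together with the standard toric exact sequence
\[
0 \longrightarrow N \longrightarrow \bigoplus_{i=1}^{\ell_k}\mathbb{Z}\,D_i \longrightarrow {\rm NS}(X_{P_k^\circ}) \longrightarrow 0,\qquad n \longmapsto \sum_{i=1}^{\ell_k}\langle n,v_i\rangle\,D_i .
\]
Hence ${\rm NS}(X_{P_k^\circ})$ is free of rank $\ell_k-3$, generated by the classes of the $D_i$ modulo the three relations above. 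First I would, for each $k$, read off an explicit $\mathbb{Z}$-basis of ${\rm NS}(X_{P_k^\circ})$ directly from this sequence.

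Next I would compute the lattice $L_k=\iota_\circ^*({\rm NS}(X_{P_k^\circ}))$ with its intersection form. By adjunction and Bertini a generic $S=S_{P_k^\circ}\in|-K_{X_{P_k^\circ}}|$ is a smooth $K3$ surface, and for classes $D,D'$ on the three-fold
\[
\iota_\circ^*D\cdot\iota_\circ^*D' \;=\; D\cdot D'\cdot S \;=\; D\cdot D'\cdot(-K_{X_{P_k^\circ}}),
\]
a triple intersection number computed purely combinatorially from the fan $\Sigma[P_k]$ (products of three invariant divisors meeting along cones, together with the linear equivalences of the sequence above). Evaluating this form on the chosen basis yields a Gram matrix which, after a change of $\mathbb{Z}$-basis, is the matrix $L_k$ recorded in Table 1. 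The map $\iota_\circ^*$ is then seen to be injective: if $\iota_\circ^* D=0$ then $D\cdot D'\cdot(-K_{X_{P_k^\circ}})=0$ for all $D'$, but the pairing $(D,D')\mapsto D\cdot D'\cdot(-K_{X_{P_k^\circ}})$ on ${\rm NS}(X_{P_k^\circ})_{\mathbb{R}}$ is nondegenerate by the Hard Lefschetz theorem for the ample class $-K_{X_{P_k^\circ}}$ on the smooth three-fold. Thus $L_k$ is nondegenerate of rank $\ell_k-3$; since it contains the class $\iota_\circ^*(-K_{X_{P_k^\circ}})$, which is ample on $S$ and so of positive self-intersection, the Hodge index theorem forces its signature to be $(1,\ell_k-4)$.

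The genuine content, and the step I expect to be the main obstacle, is to promote the inclusion $L_k=\iota_\circ^*({\rm NS}(X_{P_k^\circ}))\subseteq{\rm NS}(S)$ to an equality for a generic member $S$. This is a Noether--Lefschetz type assertion: one must exclude algebraic classes on the very general anti-canonical $K3$ that do not descend from the ambient three-fold. The route I would take is to show that the period map of the family $\{S_{P_k^\circ}\}$ has maximal variation, equivalently that the infinitesimal variation of Hodge structure built from the residues of the anti-canonical monomials $\chi^m$ $(m\in P_k^\circ\cap N)$ is nondegenerate; then the Picard-number-jumping locus is a countable union of proper analytic subsets of the parameter space, so the very general $S$ has ${\rm NS}(S)$ of rank exactly $\ell_k-3$ and therefore ${\rm NS}(S)=L_k$. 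Since this equality is precisely the statement recorded in \cite{K2} and \cite{Mase}, the economical route for these eighteen explicit polytopes is to invoke their per-case computations for the equality, while retaining the toric triple-intersection computation above as an independent verification of the Gram matrices $L_k$ and of the signature $(1,\ell_k-4)$.
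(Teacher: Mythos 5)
Your computational scaffolding is sound and is essentially the computation behind the cited sources: the toric exact sequence identifies ${\rm NS}(X_{P_k^\circ})$ as a rank-$(\ell_k-3)$ quotient of the invariant divisors, the projection formula reduces $\iota_\circ^*D\cdot\iota_\circ^*D'$ to the triple product $D\cdot D'\cdot(-K_{X_{P_k^\circ}})$ computable from the fan, nondegeneracy of that pairing follows from Hard Lefschetz, and the Hodge index theorem pins down the signature $(1,\ell_k-4)$. The gap is in the step you yourself flag as the main obstacle, and it is twofold. First, you miss the mechanism the paper actually records: Moishezon's criterion (Proposition \ref{PropMoishezon}) states that for a general hyperplane section $\iota\colon S\hookrightarrow X$ of a smooth projective three-fold, $\iota^*\colon{\rm NS}(X)\to{\rm NS}(S)$ is \emph{surjective} whenever $h^{2,0}(S)>h^{2,0}(X)$. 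Here $h^{2,0}(S_{P_k^\circ})=1$ while $h^{2,0}(X_{P_k^\circ})=0$ (a smooth toric Fano three-fold is rational), and $-K_{X_{P_k^\circ}}$ is very ample, so a generic anticanonical member is a general hyperplane section in the anticanonical embedding. Surjectivity plus your injectivity argument gives ${\rm NS}(S_{P_k^\circ})=L_k$ outright, for a general (not merely very general) member, with no period-map or Noether--Lefschetz input whatsoever.

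Second, your proposed substitute route has a genuine logical defect even if the variation statement were established: from ``the very general $S$ has Picard number exactly $\ell_k-3$'' you conclude ``therefore ${\rm NS}(S)=L_k$,'' but equality of ranks only makes $L_k$ a finite-index sublattice of ${\rm NS}(S)$; a priori ${\rm NS}(S)$ could be a proper even overlattice of $L_k$, and nothing in your argument excludes this (one would need to rule out nontrivial even overlattices case by case, or prove primitivity of $\iota_\circ^*{\rm NS}(X_{P_k^\circ})$ in $H^2(S,\mathbb{Z})$ by a separate argument --- which is exactly what surjectivity in Proposition \ref{PropMoishezon} delivers for free). Moreover, nondegeneracy of the infinitesimal variation at a point only gives local immersivity of the period map; to conclude that the jumping locus is a countable union of proper subsets you need the image to have the full dimension $23-\ell_k$ of the $L_k$-polarized period domain, and the required dimension count for $|-K_{X_{P_k^\circ}}|$ modulo ${\rm Aut}(X_{P_k^\circ})$ is nowhere carried out --- otherwise the image could sit entirely inside a single Noether--Lefschetz divisor. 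Your closing fallback of simply invoking \cite{K2} and \cite{Mase} for the equality is legitimate (the paper itself states the proposition by citation), but the self-contained argument you sketch does not close as written.
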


In practice,   \cite{K2} proves the proposition by applying the following  result.

 \begin{prop} (\cite{Mo} Theorem 7.5)\label{PropMoishezon}
Let $X$ be a three-dimensional smooth  projective algebraic variety.
Let $\iota : S\hookrightarrow X$ be an embedding of a very general hyperplane section.
Then, the pull-back $\iota^*: {\rm NS}(X) \rightarrow {\rm NS}(S)$ is a surjective mapping 
if and only if 
$X $ and $S$ satisfy the following (i) or (ii):
\begin{itemize}

\item[(i)] $b_2(X)=b_2(S)$,

\item[(ii)] $h^{2,0} (S) > h^{2,0}(X)$.

\end{itemize} 
\end{prop}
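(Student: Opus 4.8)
The plan is to pass to a Hodge-theoretic setting and to isolate the obstruction to surjectivity inside the vanishing cohomology of $S$. First I would invoke the Lefschetz hyperplane theorem: since $X$ is a smooth projective threefold and $S$ is a smooth hyperplane section with $\dim_{\mathbb{C}} S = 2$, the restriction $\iota^* : H^2(X,\mathbb{Q}) \to H^2(S,\mathbb{Q})$ is injective and is a morphism of Hodge structures. Consequently every Hodge number satisfies $h^{p,q}(X) \le h^{p,q}(S)$; in particular $h^{2,0}(X) \le h^{2,0}(S)$ and $b_2(X) \le b_2(S)$, so conditions (i) and (ii) are really statements about which of these two inequalities is an equality. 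Because ${\rm NS}$ consists exactly of the integral $(1,1)$-classes (Lefschetz $(1,1)$-theorem, which on a surface identifies ${\rm NS}$ with $H^2\cap H^{1,1}$) and $\iota^*$ preserves algebraicity, the surjectivity of $\iota^* : {\rm NS}(X) \to {\rm NS}(S)$ becomes a question about $(1,1)$-classes of $S$ that do not come from $X$.

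Next I would introduce the orthogonal decomposition, with respect to the cup-product form, $H^2(S,\mathbb{Q}) = \iota^* H^2(X,\mathbb{Q}) \oplus H^2(S,\mathbb{Q})_{\mathrm{van}}$, where the second summand is the vanishing cohomology; after embedding $S$ in a Lefschetz pencil, $\iota^* H^2(X,\mathbb{Q})$ is the monodromy-invariant part and $H^2(S,\mathbb{Q})_{\mathrm{van}}$ is spanned by the vanishing cycles. Both summands are sub-Hodge-structures, and the Hodge numbers of the vanishing part satisfy $h^{2,0}_{\mathrm{van}} = h^{2,0}(S) - h^{2,0}(X)$, with the rank of the vanishing part equal to $b_2(S)-b_2(X)$. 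Since a Hodge class of $S$ lying in $\iota^* H^2(X,\mathbb{Q})$ is the restriction of a class forced to be of type $(1,1)$ on $X$ (as $\iota^*$ is an injective morphism of Hodge structures), one obtains $\iota^*{\rm NS}(X)_\mathbb{Q} = {\rm NS}(S)_\mathbb{Q} \cap \iota^* H^2(X,\mathbb{Q})$. Hence $\iota^*$ is surjective rationally if and only if ${\rm NS}(S)_\mathbb{Q} \cap H^2(S,\mathbb{Q})_{\mathrm{van}} = 0$, and the whole proof reduces to deciding when the vanishing cohomology contains no nonzero algebraic class.

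Then I would split into the two regimes governed by $h^{2,0}_{\mathrm{van}} = h^{2,0}(S) - h^{2,0}(X)$. If $h^{2,0}_{\mathrm{van}} = 0$ (the failure of (ii)), the vanishing cohomology is a Hodge structure purely of type $(1,1)$, so every one of its classes is algebraic; thus ${\rm NS}(S)_\mathbb{Q} \cap H^2(S,\mathbb{Q})_{\mathrm{van}} = H^2(S,\mathbb{Q})_{\mathrm{van}}$, which is zero exactly when $b_2(X) = b_2(S)$, i.e. exactly under (i). This simultaneously proves that (i) implies surjectivity and that, when both (i) and (ii) fail, the nonzero purely-$(1,1)$ vanishing cohomology produces algebraic classes on $S$ outside the image, so $\iota^*$ is not surjective, which gives the converse direction. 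If instead $h^{2,0}_{\mathrm{van}} > 0$, which is precisely condition (ii), I would appeal to the Noether--Lefschetz theorem: for a very general $S$ in the linear system the vanishing cohomology carries no nonzero Hodge class, so ${\rm NS}(S)_\mathbb{Q} \cap H^2(S,\mathbb{Q})_{\mathrm{van}} = 0$ and $\iota^*$ is surjective (and in fact ${\rm NS}(S) = \iota^*{\rm NS}(X)$ integrally, as in the classical Noether theorem). I would also record that the three regimes just described are mutually exclusive, since (i) forces all Hodge numbers of $X$ and $S$ to coincide.

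The hard part is the Noether--Lefschetz step in case (ii). Its proof rests on two ingredients: the irreducibility of the monodromy action of the Lefschetz pencil on $H^2(S,\mathbb{Q})_{\mathrm{van}}$, and Deligne's global invariant cycle theorem together with the theorem of the fixed part (or, equivalently, an infinitesimal Noether--Lefschetz computation showing that each component of the Noether--Lefschetz locus is proper, plus the countability of that locus). Concretely, any class algebraic for a very general member must be monodromy-invariant; but the invariants of the irreducible, nonconstant variation of Hodge structure on the vanishing cohomology, which is nonconstant precisely because $h^{2,0}_{\mathrm{van}} > 0$, vanish, so no such class exists. The remaining care is in upgrading the rational surjectivity to the integral statement about the lattices ${\rm NS}(X)$ and ${\rm NS}(S)$: in case (ii) this is exactly the very-general Noether--Lefschetz conclusion ${\rm NS}(S) = \iota^*{\rm NS}(X)$, whereas in case (i) one uses that $\iota^*$ is an isomorphism of integral Hodge structures modulo torsion. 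I would treat the verification that ``general'' may be taken in the precise sense required here (very general, i.e. outside a countable union of proper subvarieties) as the only genuinely delicate point.
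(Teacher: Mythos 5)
The paper does not prove this proposition at all: it is quoted verbatim from Moishezon (\cite{Mo}, Theorem 7.5) and used as a black box to compute ${\rm NS}(S_{P^\circ})$, so there is no internal proof to compare yours against. Judged on its own, your argument is essentially correct and is the standard modern Hodge-theoretic proof, which is moreover close in spirit to Moishezon's original Lefschetz-pencil/monodromy argument. The architecture is sound: the orthogonal splitting $H^2(S,\mathbb{Q})=\iota^*H^2(X,\mathbb{Q})\oplus H^2(S,\mathbb{Q})_{\rm van}$ into sub-Hodge-structures (nondegeneracy of the form on the image follows from hard Lefschetz on $X$), the identification $\iota^*{\rm NS}(X)_{\mathbb{Q}}={\rm NS}(S)_{\mathbb{Q}}\cap \iota^*H^2(X,\mathbb{Q})$ via strictness of $\iota^*$ and Lefschetz $(1,1)$ on $X$, the reduction to ${\rm NS}(S)_{\mathbb{Q}}\cap H^2_{\rm van}=0$ (which tacitly, and correctly, uses that on a \emph{surface} every rational $(1,1)$-class is algebraic, so the vanishing component of an algebraic class is again algebraic), and the dichotomy on $h^{2,0}_{\rm van}=h^{2,0}(S)-h^{2,0}(X)$: if it vanishes the vanishing part is purely $(1,1)$ and hence entirely algebraic, giving both that (i) implies surjectivity and that failure of (i) and (ii) kills surjectivity; if it is positive, irreducibility of the monodromy on $H^2_{\rm van}$, countability of the Hodge loci, and the fixed-part/semisimplicity theorems give the Noether--Lefschetz vanishing for very general $S$.

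Two points you flagged deserve to be nailed down, and both can be. First, ``general'' in the statement must indeed be read as ``very general'' (outside a countable union of proper subvarieties); this is the sense in which the result is applied in the paper (there $h^{2,0}(X_{P^\circ})=0<1=h^{2,0}(S_{P^\circ})$, so case (ii) applies to a generic anticanonical member). Second, the integral upgrade is not automatic from rational surjectivity but follows from a concrete topological fact you did not state: the cokernel of $\iota^*\colon H^2(X,\mathbb{Z})\to H^2(S,\mathbb{Z})$ injects into $H^3(X,S;\mathbb{Z})\cong H_3(X\setminus S;\mathbb{Z})$, and since $X\setminus S$ is a smooth affine threefold it has the homotopy type of a CW complex of real dimension $\leq 3$ (Andreotti--Frankel), whose top homology is torsion-free; hence any integral class that is rationally in the image is integrally in the image, and its (unique, by injectivity of $\iota^*$) preimage is of type $(1,1)$, hence lies in ${\rm NS}(X)$. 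With these two clarifications your sketch closes into a complete proof of the cited statement.
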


We have $h^{2,0}(S_{P^\circ})=1$. 
Also, a smooth Fano three-fold $X_{P^\circ}$ satisfies $h^{2,0}(X_{P^\circ})=0$.
So,  one can apply Proposition \ref{PropMoishezon}
and calculate the intersection form of ${\rm NS}(S_{P^\circ})$  from that of ${\rm NS}(X_{P^\circ})$.

\begin{rem} \label{RemCheck}
By calculating the invariant of $L_k$ (see Section 4) for each $k$,
 we can see that the orthogonal complement $L_k^\perp$ admits a decomposition in the form $U\oplus \widecheck{L}_k$,
where $\widecheck{L}_k$ is a lattice of signature $(1,22-\ell_k)$.
 \end{rem}

 Furthermore, according to \cite{Kob} Corollary 4.3.6, 
  Proposition  \ref{PropLatticeDual} implies the isomorphism $L_{P_k}\simeq {\rm NS}(S_{P_k})$.
 Therefore, in order to see that the Dolgachev conjecture holds for three-dimensional Fano polytopes,
 it is enough to prove 
 \begin{align}\label{mirrorK3}
 {\rm Tr}(S_{P}) \simeq U \oplus {\rm NS}(S_{P^\circ}) =U \oplus L_{P^\circ}
 \end{align}
 for each  three-dimensional Fano polytope $P$.
If (\ref{mirrorK3}) holds, we have ${\rm Tr}(S_{P^\circ}) \simeq U \oplus {\rm NS}(S_{P})$.

A pair $(S,S')$ of  $K3$ surfaces is usually called a mirror pair if
$
{\rm Tr}(S) \simeq U \oplus {\rm NS}(S').
$
The relation (\ref{mirrorK3})  means that $(S_P, S_{P^\circ})$ is a mirror pair.

\subsection{$K3$  hypersurfaces with explicit parameters}

For each Fano polytope $P_k$ in Table 1,
let  $S_k=S_{P_k}$ be a very general member of the linear system $ |-K_{X_{P_k}}| $ in the toric three-fold $X_{P_k}$.
Due to Table 1,  Proposition \ref{PropLatticeDual} and (\ref{mirrorK3}),
it is enough to prove the relation
\begin{align}\label{K3mirrorF}
{\rm Tr}(S_k) \simeq U \oplus L_{k}
\end{align}
for $k\in \{1,\ldots, 18\}$ to see that the Dolgachev conjecture holds for the Fano polytopes.
As in Table 1,
the number of vertices of a three-dimensional Fano polytope is one of $4,5,6,7$ or $8$.

Let us recall  previous research.
The N\'eron-Severi lattice  of $S_1$, which is derived from the unique Fano polytope $P_1$ with $4$ vertices,
is determined by Narumiya and Shiga \cite{NS}. 
They show (\ref{K3mirrorF}) holds for $S_1$.
The motivation and the method of their paper are based on the study of the monodromy group of the Picard-Fuchs equation for the family of $S_1$.
The second author was inspired by \cite{NS}. 
Also, he was interested in a numerical approach of  Ishige,
 who studies the monodromy group of the Picard-Fuchs system for $K3$ surfaces derived from the Fano polytope  $P_3$ in detail.
Affected by these earlier studies,
 the second author \cite{Na}  determines the the N\'eron-Severi lattices coming from the Fano polytopes with $5$ vertices 
and studies the monodromy groups of the Picard-Fuchs systems.
His proof is based on techniques of Mordell-Weil lattices introduced in \cite{Shioda}.
Anyway,
in these works,
it is established  that the Dolgachev conjecture is true for the Fano polytopes with $4$ or $5$ vertices.

\begin{rem}
In  \cite{I} and \cite{IIT},
one can find the above mentioned Ishige's numerical approach to study the monodromy group.
We remark that there are several other computer-assisted studies of Picard-Fuchs systems coming from Fano polytopes.
For example,
Nakayama and Takayama \cite{NT} 
give an approximation algorithm to compute 
 Picard-Fuchs systems attached to  three-dimensional Fano polytopes 
via techniques of $D$-modules.
\end{rem}

In the present paper,
we will see that the Dolgachev conjecture is true for all the Fano polytopes with $6,7$ or $8$ vertices.
Namely, we will study toric $K3$ hypersurfaces $S_k$ $(k\in \{6,\ldots, 18\}).$
In this subsection, we will give explicit forms  of them.

Suppose a three-dimensional Fano polytope $P_k$ is given by
\begin{align}\label{polytopeP}
P_k=\begin{pmatrix} p_{11} & \cdots & p_{1\ell_k} \\ p_{21} & \cdots & p_{2\ell_k} \\ p_{31} & \cdots & p_{3\ell_k} \end{pmatrix},
\end{align}
 where $\ell_k$ is the number of vertices of $P_k$.
According to (\ref{LatticePtsGenerators}),
the family of the $K3$ hypersurfaces $S_k$ is explicitly given by
\begin{align}\label{EquationA}
\sum_{i=1}^{\ell_k} c_i t_1^{p_{1i}} t_2^{p_{2i}} t_3^{p_{3i}}=0, \quad \quad  c_1, \ldots, c_{\ell_k} \in \mathbb{C}.
\end{align}
Each Fano polytope in Table 1 contains the origin as the unique inner lattice point and the standard simplex generated by ${}^t (1,0,0), {}^t (0,1,0), {}^t (0,0,1)$.
From the polytope $P_k$ of (\ref{polytopeP}), together with Table 1,
we set
\begin{align}
\widetilde{P}_k=
\begin{pmatrix}
1&1&\cdots &1 \\
p_{10}&p_{11}& \cdots &p_{1\ell_k} \\
p_{20}&p_{21}& \cdots &p_{2\ell_k} \\
p_{30}&p_{31}& \cdots &p_{3\ell_k} 
\end{pmatrix}
\end{align}
 where ${\small \begin{pmatrix} p_{10} & p_{11} & p_{12} & p_{13} \\ p_{20} & p_{21} & p_{22} & p_{23} \\ p_{30} & p_{31} & p_{32} & p_{33}  \end{pmatrix} =\begin{pmatrix}  0&1&0&0 \\ 0&0&1&0 \\ 0&0&0&1 \end{pmatrix} }$.
 The matrix $\widetilde{P}_k$ is regarded as a surjective linear mapping $\mathbb{R}^{\ell_k+1} \rightarrow \mathbb{R} \oplus M_\mathbb{R} $
 which induces the exact sequence 
 \begin{align}\label{Sequence1}
0 \rightarrow \mathbb{Z}^{\ell_k-3} \overset{K_k} \longrightarrow \mathbb{Z}^{\ell_k+1} \overset{\widetilde{P}_k}\longrightarrow \mathbb{Z} \oplus M \rightarrow 0,
 \end{align}
where $K_k=(\kappa_{s,t})_{1 \leq s \leq \ell_k-3, 0 \leq t \leq \ell_k }$ is a $(\ell_k+1)\times (\ell_k-3)$-matrix with entries in $\mathbb{Z}$ 
such that its minor matrix $(\kappa_{s,t})_{1 \leq s \leq \ell_k-3, 4 \leq t \leq \ell_k }$ is equal to the identity matrix of rank $\ell_k - 3$.
Then, by putting 
$
\displaystyle
x=\frac{c_1 t_1}{c_0},  y=\frac{c_2 t_2}{c_0}$ and $\displaystyle z=\frac{c_3 t_3}{c_0},
$
the equation in (\ref{EquationA}) is transformed to
\begin{align}\label{EqStack}
S_k(\lambda_1,\ldots, \lambda_{\ell_k-3}): \quad 1 + x + y + z + \sum_{i=1}^{\ell_k-3} \lambda_i x^{p_{1,i+3}} y^{p_{2,i+3}} z^{p_{3,i+3}} =0.
\end{align}
Here, we put
\begin{align}\label{lambda}
\lambda_i = \prod_{h=0}^{\ell_k} c_h^{\kappa_{i+3, h}}.
\end{align}

By clearing denominators of (\ref{EqStack})
for the cases of $k\in \{6,\ldots,18\}$, 
we obtain explicit defining equations of $S_k(\lambda_1,\ldots,\lambda_{\ell_k -3})$ as in Table 2.

\vspace{5mm}
\begin{longtable}{ll}
\caption{Defining equations of toric $K3$ hypersurfaces $S_k$}
 \vspace{-1.5mm}
 \\
\hline
  $k$ & Defining equations of $S_k (\lambda_1,\ldots,\lambda_{\ell_k -3})$   
    \\
  \hline
  \endhead
\vspace{1mm}$6$ & $x y z (x + y + z+1) +\lambda_1 x y + \lambda_2 x z + \lambda_3 y z = 0$  \\
\vspace{1mm}$7$ & $xyz(x+y+z+1)+\lambda_1xy+\lambda_2x+\lambda_3y= 0$\\
\vspace{1mm}$8$ & $x y z (x+y+z+1)+\lambda_1 x y+\lambda_2 x z^2+\lambda_3 y=0$\\
\vspace{1mm}$9$ & $x y z (x+y+z+1)+\lambda_1 x y+\lambda_2 x z+\lambda_3 z=0$\\
\vspace{1mm}$10$ & $x y z (x+y+z+1)+\lambda_1 x y+\lambda_2 x+\lambda_3=0$\\
\vspace{1mm}$11$ & $x y z (x+y+z+1)+\lambda_1 x y+\lambda_2 x y^2+\lambda_3=0$\\
\vspace{1mm}$12$ & $x y z (x+y+z+1)+\lambda_1 x y+\lambda_2 x y^2+\lambda_3 z=0$\\
\vspace{1mm}$13$ & $x y z (x+y+z+1)+\lambda_1 x y+\lambda_2 x z+\lambda_3 y z+\lambda_4 z=0$\\
\vspace{1mm}$14$ & $x y z (x + y + z+1) +\lambda_1 y + \lambda_2 x z + \lambda_3 y z + \lambda_4 z=0$\\
\vspace{1mm}$15$ & $ x y z
 (x+y+z+1)+\lambda_1 x^2 y+\lambda_2 x z+\lambda_3 y z+\lambda_4 z=0$\\
 \vspace{1mm}$16$ & $x y z (x+y+z+1)+ \lambda_1+\lambda_2 x z+\lambda_3 y z+\lambda_4 z=0$\\
  \vspace{1mm}$17$ & $x y z (x+y+z+1)+\lambda_1 x y+\lambda_2 y z+\lambda_3 x z+\lambda_4 x^2 z+\lambda_5 y^2 z=0$\\
  $18$ & $x y z (x+y+z+1)+\lambda_1 x^2+\lambda_2 y z+\lambda_3 x z+\lambda_4 x^2
 z+\lambda_5 y^2 z=0$\\
\hline\\
\end{longtable}

We are able to give meaning to the above parameters $\lambda_1,\ldots, \lambda_{\ell_k-3}$ coming from the polytope $P_k$.
The matrix $K_k$ of (\ref{Sequence1}) is called the Gale transform of $\widetilde{P}_k$.
Each row of the matrix $K_k$ in (\ref{Sequence1}) is a $\mathbb{Z}$-vector of $\mathbb{R}^{\ell_k-3}$.
Let  $\Sigma'(P_k)$ be a fan in $\mathbb{R}^{\ell_k-3}$ 
whose one-dimensional cones are generated by these vectors.
This fan $\Sigma'(P_k)$ is regarded as  the secondary fan coming from  the set of  lattice points of the Fano polytope $P_k$
(for detail, see \cite{GKZ}; see also \cite{BFS} Section 4).
Then, the tuple of the parameters $\lambda_1,\ldots, \lambda_{\ell_k-3}$ of (\ref{lambda}) gives a system of coordinates of the torus $(\mathbb{C}^\times)^{\ell_k-3}$ of the $(\ell_k-3)$-dimensional toric variety determined by the secondary fan $\Sigma' (P_k)$.
Namely,  $(\mathbb{C}^\times)^{\ell_k-3}$ is regarded as ${\rm Spec}\left(\mathbb{C}\left[\lambda_1^{\pm},\ldots, \lambda_{\ell_k-3}^{\pm}\right]\right).$

Thus,
for each $k \in \{6,\ldots, 18\}$,
we obtain a family 
\begin{align}\label{Family}
\mathscr{F}_k: \{S_k (\lambda_1,\ldots, \lambda_{\ell_k-3}) \text{ of (\ref{EqStack})} \mid (\lambda_1,\ldots, \lambda_{\ell_k-3})\in (\mathbb{C}^\times)^{\ell_k-3} \} \rightarrow (\mathbb{C}^\times)^{\ell_k-3}
\end{align}
of toric $K3$ hypersurfaces.
We remark that such a family can  also be obtained via a morphism of fans precisely studied in \cite{Lafforgue}.

 \section{Jacobian elliptic fibrations and evident lattices}

 Let $S$ be a $K3$ surface.
 We call a triple $(S,\pi,C)$ an elliptic $K3$ surface, 
 if $C$ is a curve, $\pi:S\rightarrow C$ is a proper mapping such that the fibres  $\pi^{-1}(p)$ are elliptic curves for almost all $p\in C$  and $S$ is relatively minimal.
We suppose that an elliptic fibration $\pi:S\rightarrow C$ has a section $O:C\rightarrow S$ such that $O$ is the identity element of the Mordell-Weil group ${\rm MW}(\pi,O)$ of sections of $\pi$ (see  Section 5).
We call such an elliptic fibration $\pi$ a Jacobian elliptic fibration on $S$.
 
 We will study elliptic $K3$ surfaces $\pi:S\rightarrow \mathbb{P}^1(\mathbb{C}) $ induced from  cubic curves defined over the field $\mathbb{C}(x_1)$ of rational functions. 
 Here, $\mathbb{P}^1(\mathbb{C}) $ is regarded as the $x_1\text{-sphere}$.
 Suppose such an  elliptic curve is given by 
 $
 Z^2 W = Y^3 + a_1 Y^2 W + a_2 Y W^2 + a_3 W^3,
 $
 where $(W: Y: Z) $ is a system of homogeneous coordinates of $\mathbb{P}^2(\mathbb{C})$ and $a_1,a_2,a_3\in \mathbb{C}(x_1)$,
 then the point $(W:Y:Z)=(0:0:1)$ corresponds to the identity element $O$  of ${\rm MW}(\pi,O)$.
 From now on, we often use the affine form
 $
 z^2 = y^3 + a_1 y^2  + a_2 y  + a_3.
 $
 
In this section, we introduce an appropriate Jacobian elliptic fibration on the $K3$ surface $S_k (\lambda_1,\ldots, \lambda_{\ell_k-3})$ in Table 2 for each $k\in\{6,\ldots, 18\}$. 
From now on,
$S_k (\lambda_1,\ldots, \lambda_{\ell_k-3})$ will be shortly denoted by
$S_k(\lambda)$.

\begin{prop}\label{PropEquationJac}
For each $k\in \{6,\ldots, 18\},$
there is a Jacobian elliptic fibration 
$\pi_k=\pi_k^{(\lambda)}:S_k(\lambda) \rightarrow \mathbb{P}^1 (\mathbb{C})$
 defined by the equation
\begin{align}\label{EquationJac}
z_1^2 =4 y_1^3 + a_1 (x_1) y_1^2 + a_2(x_1) y_1 + a_3(x_1),
\end{align}
where $a_j (x_1)$ $(j=1,2,3)$ are polynomials given in Table 3.
\end{prop}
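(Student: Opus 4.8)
The plan is to realise each $S_k(\lambda)$ as the minimal resolution of a Weierstrass model by projecting onto the $x$-coordinate. Fixing $x=x_1$, I would read the defining equation of Table 2 as a polynomial in $y$ and $z$ over the field $\mathbb{C}(x_1)$. For every $k\in\{6,\ldots,18\}$ the common term $xyz(x+y+z+1)$ makes the equation quadratic in $z$, so one writes it as $A(x_1,y)z^2+B(x_1,y)z+C(x_1,y)=0$ with $A,B,C\in\mathbb{C}[x_1,y]$ whose coefficients involve the parameters $\lambda_i$. Completing the square in $z$, that is putting $w=2Az+B$, turns the generic fibre into the hyperelliptic form $w^2=B^2-4AC=:f(x_1,y)$, where $f$ is of degree $4$ in $y$. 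The arithmetic genus of this curve is one, which shows that the projection $(x,y,z)\mapsto x$ is an elliptic fibration; since $S_k(\lambda)$ is a $K3$ surface, the resulting fibration is an elliptic $K3$ fibration over the $x$-line, whose base coordinate I denote $x_1$.

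The decisive structural feature is that the leading coefficient of $f$ in $y$ is automatically a perfect square in $\mathbb{C}[x_1]$. Indeed, a direct inspection of Table 2 shows that $B$ always has degree $2$ in $y$ while $4AC$ has degree at most $3$, so the top coefficient of $f$ equals the square of the leading coefficient of $B$ in $y$ (for instance $x_1^2$ for $S_6$, and $(x_1+\lambda_5)^2$ for $S_{17}$ and $S_{18}$, where an extra monomial $\lambda_5 y^2 z$ enters $B$). Consequently $w/y^2\to\pm c(x_1)$ as $y\to\infty$ for a polynomial $c(x_1)$, and the quartic model carries a rational section supported at $y=\infty$. Taking this point as the origin, I would apply the classical reduction of a quartic $w^2=f(y)$ with square leading coefficient to Weierstrass normal form: an explicit birational change of the fibre coordinates $(y,w)\mapsto(y_1,z_1)$ produces an equation of the shape $z_1^2=4y_1^3+a_1(x_1)y_1^2+a_2(x_1)y_1+a_3(x_1)$. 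Under this reduction the chosen point over $y=\infty$ is sent to the point at infinity $(y_1,z_1)=(\infty,\infty)$ of the Weierstrass cubic, which is exactly the asserted section $O:x\mapsto(x,\infty,\infty)$ and serves as the zero of the Mordell--Weil group; hence the fibration $\pi_k$ is Jacobian. Because $S_k(\lambda)$ is a $K3$ surface, it coincides with the minimal resolution of this Weierstrass model, and the birational map above is the required isomorphism of elliptic surfaces.

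What remains is to carry out the reduction explicitly and to check that the coefficients so obtained agree with the polynomials $a_i(x_1)$ recorded in Table 3. I expect this to be the main obstacle: although the completion of the square and the passage to Weierstrass form are uniform in principle, the $\lambda$-dependent monomials differ from polytope to polytope, so the quartic $f(x_1,y)$ and the ensuing Weierstrass coefficients must be computed case by case for the thirteen values of $k$. The cases where a $\lambda_i$-monomial perturbs the quadratic part, such as $\lambda_2 x z^2$ in $S_8$ (which enlarges $A$) or $\lambda_5 y^2 z$ in $S_{17},S_{18}$ (which changes the leading coefficient of $B$), require the most care, and they are precisely the places where projecting to $x$ rather than to $y$ or $z$ is what keeps the construction uniform. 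One should also record that the discriminant $\Delta(x_1)$ built from the $a_i$ is not identically zero, so that the generic fibre is a smooth elliptic curve; this is guaranteed a priori by $S_k(\lambda)$ being $K3$ rather than rational. Once the coefficients are matched with Table 3, the proposition follows.
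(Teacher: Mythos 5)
Your general method is the right one, and it is in essence how the transformations behind the paper's proof are produced: the paper proves the proposition simply by exhibiting explicit birational changes of coordinates (Tables 4, 5, 6) that carry the affine equations of Table 2 into the form (\ref{EquationJac}), and such transformations are indeed found by fibring over a coordinate line, completing the square in the variable in which the equation is quadratic, and reducing the resulting quartic with square leading coefficient to Weierstrass form. The gap is in your choice of projection. You project onto the $x$-coordinate uniformly for all $k$, but inspection of Tables 4--6 shows that the base coordinate $x_1$ of the paper's fibration $\pi_k$ is the \emph{original $y$} for $k\in\{6,7,8,9,11,13,14,15,16,17\}$, the original $z$ for $k\in\{10,12\}$, and the original $x$ only for $k=18$. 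For the equations that are not symmetric in the relevant variables, the $x$-projection is a genuinely different elliptic fibration and your final step, matching the computed coefficients against Table 3, cannot succeed. Concretely, for $k=8$ the coefficient of $z^2$ is $A=x(y+\lambda_2)$, so fibring over $y$ produces a reducible fibre at $x_1=-\lambda_2$, and indeed $a_2$ and $a_3$ in Table 3 carry the factor $(\lambda_2+x_1)$ (cf.\ $\pi_8^{-1}(-\lambda_2)$ in Table 8); projecting to $x$ instead yields $f=x^2y^2(x+y+1)^2-4xy(y+\lambda_2)(\lambda_1x+\lambda_3)$, whose special base values involve $\lambda_1x_1+\lambda_3$, not $\lambda_2+x_1$. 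Likewise for $k=17$: Table 3 contains $(\lambda_4+x_1)$ and $(\lambda_2+\lambda_5x_1)$, which arise from the $y$-projection (with reducible fibres at $-\lambda_4$ and $-\lambda_2/\lambda_5$, see Table 8), whereas your $x$-projection would produce the factor $(\lambda_5+x_1)$ --- which is precisely the factor occurring in the $k=18$ coefficients, the one case where the paper does fibre over $x$. Your cited leading square ``$(x_1+\lambda_5)^2$ for $S_{17}$'' is thus a symptom of the wrong projection, not evidence for it.

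Even in the cases with an $x\leftrightarrow y$ symmetry (e.g.\ $k=6,7$, where swapping $x$ and $y$ exchanges $\lambda_2$ and $\lambda_3$), the $x$-projection reproduces Table 3 only after relabelling the parameters, which again breaks the literal coefficient match the proposition asserts. The remainder of your outline (square leading coefficient of the quartic giving a rational point over $y=\infty$, reduction to the cubic with $O:x\mapsto(x,\infty,\infty)$, nonvanishing of the discriminant for generic $\lambda$) is sound and consistent with the paper, but to complete the proof you must fibre over the coordinate the paper actually uses in each case --- equivalently, verify the paper's Tables 4--6 by direct substitution --- since the proposition pins down the specific polynomials $a_1,a_2,a_3$ of Table 3 rather than merely the existence of some Jacobian fibration.
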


\vspace{-2.5mm}
{\small
\begin{longtable}{llll}
  \caption{\normalsize{$a_1(x_1), a_2(x_1)$ and $a_3(x_1)$ of (\ref{EquationJac})}}
  \vspace{-2.5mm}\\
\hline
  $k$ &  $a_1(x_1) $ &$a_2(x_1)$ & $a_3(x_1)$    
     \\
  \hline
  \endhead
\vspace{1mm}$6$ & $\lambda_2^2 + 2 \lambda_2 x_1 (1 + x_1) + x_1^2 (-4 \lambda_1 - 4 \lambda_3 + (1 + x_1)^2)$ & $4 \lambda_1 \lambda_3  x_1^4$ & $0$  \\
\vspace{1mm}$7$ & $x_1 (-4 \lambda_2 + x_1 (-4 \lambda_1 + (1 + x_1)^2))$ & $-2 \lambda_3 x_1^4 (1 + x_1)$ & $\lambda_3^2 x_1^6$ \\
\vspace{1mm}$8$ & $x_1 (x_1 (1 + x_1)^2 - 4 \lambda_1 (\lambda_2 + x_1))$ & $-2 \lambda_3 x_1^3 (1 + x_1) (\lambda_2 + x_1)$ & $\lambda_3^2 x_1^4 (\lambda_2 + x_1)^2$ \\
\vspace{0mm}$9$ & $\lambda_2^2 + 2 \lambda_2 x_1 (1 + x_1) $ & $4 \lambda_1 \lambda_3  x_1^3$ & $0$ \\
\vspace{1mm}& $+ x_1 (-4 \lambda_3 + x_1 (-4 \lambda_1 + (1 + x_1)^2))$ & & \\
\vspace{1mm}$10$ & $\lambda_1^2 + 2 \lambda_1 x_1 (1 + x_1) + x_1 (-4 \lambda_2 + x_1 (1 + x_1)^2)$ & $-2 \lambda_3 x_1^2 (\lambda_1 + x_1 + x_1^2)$ & $\lambda_3^2 x_1^4$ \\
\vspace{1mm}$11$ & $x_1^2 (1 - 4 \lambda_1 + (2 - 4 \lambda_2) x_1 + x_1^2)$ & $-2 \lambda_3 x_1^3 (1 + x_1)$ & $\lambda_3^2 x_1^4$\\
\vspace{1mm}$12$ & $(\lambda_1 + x_1 + x_1^2)^2$ & $-2 \lambda_3 x_1^2 (\lambda_2 + x_1) (\lambda_1 + x_1 + x_1^2)$ & $\lambda_3^2 x_1^4 (\lambda_2 + x_1)^2$\\
\vspace{0mm}$13$ & $\lambda_2^2 + 2 \lambda_2 x_1 (1 + x_1) $ & $4 \lambda_1 x_1^3 (\lambda_4 + \lambda_3 x_1)$ & $0$\\
\vspace{1mm} & $+ x_1 (-4 \lambda_4 + x_1 (-4 \lambda_1 - 4 \lambda_3 + (1 + x_1)^2))$ & &  \\
\vspace{0mm}$14$ & $\lambda_2^2 + 2 \lambda_2 x_1 (1 + x_1) $ & $-2 \lambda_1 x_1^3 (\lambda_2 + x_1 + x_1^2)$ & $\lambda_1^2 x_1^6$\\
\vspace{1mm}& $+ x_1 (-4 \lambda_4 + x_1 (-4 \lambda_3 + (1 + x_1)^2))$ && \\
\vspace{0mm}$15$ & $\lambda_2^2 + 2 \lambda_2 x_1 (1 + x_1) $ & $-2 \lambda_1 x_1^2 (\lambda_4 + \lambda_3 x_1) (\lambda_2 + x_1 + x_1^2)$ & $\lambda_1^2 x_1^4 (\lambda_4 + \lambda_3 x_1)^2$\\
\vspace{1mm}&$+ x_1 (-4 \lambda_4 + x_1 (-4 \lambda_3 + (1 + x_1)^2))$ && \\
 \vspace{0mm}$16$ & $\lambda_2^2 + 2 \lambda_2 x_1 (1 + x_1) $ & $-2 \lambda_1 x_1^2 (\lambda_2 + x_1 + x_1^2)$ & $\lambda_1^2 x_1^4$\\
\vspace{1mm} & $+ x_1 (-4 \lambda_4 + x_1 (-4 \lambda_3 + (1 + x_1)^2))$ && \\
  \vspace{0mm}$17$ & $\lambda_3^2 + 2 \lambda_3 x_1 (1 + x_1)  $ & $4 \lambda_1 x_1^3 (\lambda_4 + x_1) (\lambda_2 + \lambda_5 x_1)$ & $0$\\
\vspace{0mm}& $+x_1 (-4 \lambda_2 (\lambda_4 + x_1)  $ & & \\
\vspace{1mm} &$\hspace{7mm}+ x_1 (1 - 4 \lambda_1 - 4 \lambda_4 \lambda_5 + 2 x_1 - 4 \lambda_5 x_1 + x_1^2))$  & & \\
                        $18$ &  $\lambda_2^2 + 2 \lambda_2 x_1 (1 + x_1)  $ & $-2 \lambda_1 x_1^3 (\lambda_5 + x_1) (\lambda_2 + x_1 + x_1^2)$ & $\lambda_1^2 x_1^6 (\lambda_5 + x_1)^2$\\
 &$+ 
 x_1 (-4 \lambda_3 (\lambda_5 + x_1) $ && \\
 & $\hspace{7mm} + x_1 ((1 + x_1)^2 - 4 \lambda_4 (\lambda_5 + x_1)))$ & & \\
\hline\\
\end{longtable} 
}

\begin{proof}
By performing an appropriate birational transformation $(x,y,z)\mapsto (x_1,y_1,z_1) $ for each $k$ 
 as in Table A.1.1, A.1.2 and A.1.3 of Appendix, we have the assertion.
 See also \cite{M1}.
\end{proof}

 \begin{rem}
 There are several Jacobian elliptic fibrations on $S_k$ for each $k\in \{6,\ldots,18\}$.
 In fact, our $\pi_k$ is useful  to determine the lattice structure of $S_k$.
Namely, our mathematical arguments in Section 3, 4  and 5  heavily depend on the properties of $\pi_k$.
  \end{rem}

We have the following proposition by direct observations.

\begin{prop}
For each $k\in \{6,\ldots,18\},$
there are singular fibres of the elliptic fibration $\pi_k^{(\lambda)}$ on a  general member $S_k(\lambda)$ of the family $\mathscr{F}_k$ induced from the equation (\ref{EquationJac})
as listed in Table 4.
Also, there are non-trivial sections $\mathbb{P}^1 (\mathbb{C}) \rightarrow S_k$ of $\pi_k^{(\lambda)}$ given by $x_1 \mapsto (x_1,y_1,z_1)$  as in Table 4. 
\end{prop}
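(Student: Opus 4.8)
The plan is to treat (\ref{EquationJac}) as a Weierstrass model of $\pi_k$ over the affine line $\mathbb{A}^1=\{x_1\}$, read off every singular fibre from its discriminant, and then handle the fibre over $x_1=\infty$ separately. First I would complete the cube by the substitution $y_1\mapsto y_1-a_1(x_1)/12$, which kills the quadratic term and brings (\ref{EquationJac}) into the short form $z_1^2=4y_1^3-g_2(x_1)\,y_1-g_3(x_1)$ with $g_2=a_1^2/12-a_2$ and $g_3$ the resulting constant term; after clearing the harmless numerical denominators both $g_2$ and $g_3$ are polynomials in $x_1$ (with coefficients in $\mathbb{C}[\lambda]$). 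In the $\wp$-normalisation matching the leading coefficient $4$, the discriminant and $j$-invariant of the generic fibre are then $\Delta_k(x_1)=g_2^3-27g_3^2$ and $j_k(x_1)=1728\,g_2^3/\Delta_k$. For each $k$ I would factor $\Delta_k(x_1)$; for generic $\lambda$ its roots in $x_1$ are as forced by the shapes of $a_1,a_2,a_3$ in Table 3 (the $\lambda$-dependent constant factors being nonzero and contributing no fibres), and each root $x_1=x_0$, together with its multiplicity, pins down a candidate singular fibre.

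At each finite root I would invoke Kodaira's classification via Tate's algorithm: the order of vanishing $\mathrm{ord}_{x_0}(\Delta_k)$ together with the pole order of $j_k$ at $x_0$ determines the type. A pole of $j_k$ of order $n$ signals multiplicative reduction and a fibre of type $I_n$; finiteness of $j_k$ signals additive reduction and one of $II,III,IV,I_0^*,IV^*,III^*,II^*$, while a pole of order $n$ with $\mathrm{ord}_{x_0}(\Delta_k)=n+6$ gives $I_n^*$. In the additive cases I would run the remaining Tate steps on the local model to fix the type and to confirm minimality. The fibre over $x_1=\infty$ requires the coordinate change $x_1=1/s$ together with the weighted rescaling $y_1=s^{-2d}\widetilde{y}$, $z_1=s^{-3d}\widetilde{z}$, with $d$ chosen so that the rescaled coefficients are regular and the model is minimal at $s=0$; I would then read off the fibre at infinity by the same discriminant/$j$ analysis. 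As a global check I would verify that the Euler numbers of all these fibres sum to $24=e(S_k)$, the constraint imposed by $S_k$ being a $K3$ surface; any bookkeeping slip in a starred fibre or at infinity would be exposed here.

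For the sections of Table 7 the verification is purely algebraic. Each is given as an explicit assignment $x_1\mapsto(x_1,y_1(x_1),z_1(x_1))$ with $y_1,z_1$ rational in $x_1$ and $\lambda$, so I would substitute into (\ref{EquationJac}) and check that the result collapses to the identity $0=0$, showing the image is a genuine section of $\pi_k$. Non-triviality, i.e.\ distinctness from the zero section $O\colon x_1\mapsto(x_1,\infty,\infty)$, is immediate because $y_1(x_1)$ is finite.

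I expect the main obstacle to be twofold. First, the fibre at $x_1=\infty$: choosing the correct weight $d$ so as to land on a \emph{minimal} Weierstrass model there is delicate, and a wrong choice silently yields the wrong Kodaira type. Second, separating the additive fibres — in particular telling $I_n^*$ apart from $I_0^*$ and from the exceptional types — cannot be done from $\mathrm{ord}(\Delta)$ alone and forces several steps of Tate's algorithm in each instance. With thirteen polytopes and several fibres apiece the computation is long, but, as the identity $\sum e(\cdot)=24$ confirms, it is entirely mechanical; this is the precise sense in which the proposition holds ``by direct observations''.
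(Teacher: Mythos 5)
Your proposal is correct and matches the paper's intent: the paper offers no written proof beyond ``by direct observations,'' meaning exactly the computation you describe (pass to a short Weierstrass form, factor $\Delta_k=g_2^3-27g_3^2$, read off Kodaira types from the orders of $\Delta_k$ and the poles of $j_k$ including at $x_1=\infty$, and verify the sections by substitution), and the same Weierstrass/$j$-invariant/discriminant apparatus reappears explicitly in the paper's proof of Lemma \ref{LemPsi}. Your Euler-number check $\sum e(\cdot)=24$ is a sensible safeguard the paper does not state, and it is consistent with every row of Table 7.
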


{\small
\begin{longtable}{lll}
  \caption{\normalsize{Singular fibres and non-trivial sections  of  $\pi_k$ }}
  \vspace{-5.5mm}\\
  \\
\hline
  $k$ & Kodaira type of singular fibres &  Non-trivial sections   $x_1 \mapsto (x_1,y_1,z_1)$   
    \\
  \hline
  \endhead
\vspace{1mm}$6$ &$I_8 + I_8 +8 I_1$ & $O': x_1\mapsto (x_1,0,0)$   \\
\vspace{1mm} & & $Q: x\mapsto (x_1,\lambda_1 x_1^2,\lambda_1 x_1^2 (\lambda_2 + x_1 +x_1^2))$\\
\vspace{1mm}$7$ & $I_3^*  +I_8 +7 I_1$ & $Q: x_1\mapsto (x_1,0,\lambda_3 x_1^3)$   \\
\vspace{1mm}$8$  & $I_1^* + I_3 + I_8 + 6 I_1$  & $Q: x_1\mapsto (x_1,0,\lambda_3 x_1^2 (\lambda_2+x_1) )$   \\
\vspace{1mm}$9$ & $I_6 + I_{10} +8 I_1$ & $O': x_1\mapsto (x_1,0,0)$   \\
\vspace{1mm}&&  $Q: x_1\mapsto (x_1,\lambda_1 x_1^2,\lambda_1 x_1^2 (\lambda_2+x_1 + x_1^2))$ \\
\vspace{1mm}$10$  & $I_5 +I_{11}+8 I_1$ & $Q: x_1\mapsto (x_1,0,\lambda_3  x_1^2)$   \\
\vspace{1mm}$11$ &$IV^* + I_9 + 7 I_1$ & $Q: x_1\mapsto (x_1,0,\lambda_3  x_1^2)$   \\
\vspace{1mm}$12$ &$I_6 + I_3 + I_9 + 6 I_1$ & $Q: x_1\mapsto (x_1,0,\lambda_3  x_1^2 (\lambda_2+x_1))$   \\
\vspace{1mm}$13$ &$I_6 + I_2 +I_8 +  8I_1 $ & $O': x_1\mapsto (x_1,0,0)$   \\
\vspace{1mm}& & $Q: x_1\mapsto (x_1,x_1(\lambda_4+ \lambda_3 x_1),x_1(\lambda_4+\lambda_3 x_1) (\lambda_2+x_1 + x_1^2))$ \\
\vspace{1mm}$14$ &$I_7 +I_8 + 9 I_1$ & $Q: x_1\mapsto (x_1,0,\lambda_1 x_1^3)$   \\
\vspace{1mm}$15$ &$I_5 +I_3 +I_8 +8I_1$ & $Q: x_1\mapsto (x_1,0,\lambda_1 x_1^2 (\lambda_4 + \lambda_3 x))$   \\
\vspace{1mm}$16$ &$I_5 + I_{10} + 9I_1$ & $Q: x_1\mapsto (x_1,0,\lambda_1 x_1^2 )$   \\
\vspace{1mm}$17$ &$I_6 + I_2 +I_2 + I_6 + 8 I_1$ & $O': x_1\mapsto (x_1,0,0)$   \\
\vspace{1mm}& & $Q: x_1\mapsto (x_1,x_1(\lambda_2+ \lambda_5 x_1) (\lambda_4+x_1),x_1 (\lambda_2+\lambda_5 x_1)(\lambda_4 +x_1)(\lambda_3+x_1 +x_1^2))$ \\
 \vspace{1mm}    $18$ & $I_7 +I_3 +I_5 + 9I_1$ & $Q: x_1\mapsto (x_1,0,\lambda_1 x_1^3 (\lambda_5 +x_1) )$   \\
\hline\\
\end{longtable} 
}

\begin{rem}
In fact, there are certain loci in $(\mathbb{C}^\times)^{\ell_k -3}$ on which we have degenerate elliptic $K3$ surfaces.
For instance, one can obtain a locus on  which two singular fibres of type $I_1$  collapse into a fibre of type $I_2$ for each $k$.
A  general member of the family $\mathscr{F}_k$  is avoiding such loci and  has the singular fibres  in Table 4.
\end{rem}

From now on,
for a $K3$ surface $S$,
let us regard ${\rm NS}(S)$ as a sublattice of the $2$-homology group $H_2(S,\mathbb{Z}).$ 
For $P\in {\rm MW}(\pi,O)$, let $(P) \in {\rm NS}(S)$ denote the corresponding divisor.  

Each elliptic fibration $\pi_k$ is determined by the equation (\ref{EquationJac}) and the coefficients $a_j(x_1)$ $(j\in \{1,2,3\})$ in Table 3.
If  a fibre $\pi_k^{-1} (x_1')$ at $x_1' \in \mathbb{P}^1 (\mathbb{C})$ is a singular fibre,
one can determine the Kodaira type of it.
The equation (\ref{EquationJac}) is transformed to an equation
$z_2^2 = y_2^3 +a_1'(x_2) y_2^2 + a_2'(x_2) y_2 + a_3'(x_2)$ 
with $x_2=x_1-x_1'$.
By putting $y_2 = y_2' - \frac{1}{3} a_1' (x_2)$,
we have the Weierstrass model
$z_2^2 = y_2'^3 + \alpha (x_2) y_2' +\beta(x_2)$.
Then, by referring to \cite{Kod}, we can determine the Kodaira type of $\pi_k^{-1} (x_1')$.
Each Kodaira type is obtained by a sequence of appropriate blow-ups.
We can check how each non-trivial section of $\pi_k$ intersects components of singular fibres
by observing the blow-ups and the explicit form of the section appeared in Table 4.

A general fibre
for each fibration $\pi_k$ is denoted by $F$.
For each $k\in \{6,\ldots, 18\}$,
the dual graph of 
the  sections and
the fibres  of $\pi_k$
is illustrated in Table 5. 
Here, each dot $\bullet$ ($\odot$, resp.)  stands for a rational nodal curve (a general fibre of the elliptic fibration, resp.).

\begin{longtable}{lll}
\caption{Dual graphs of the elliptic fibration $\pi_k$ coming from (\ref{EquationJac})}
\vspace{-5.5mm}\\
\\
\hline
  $k$ & Dual graphs & Divisors in singular fibres
    \\
  \hline
  \endhead
\vspace{1mm}\raisebox{8.6em}{$6$} & 
{\normalsize
\begin{tikzpicture}[scale=0.5]
 \draw (0,1)--(-1,2)--(-1,4)--(0,5)--(1,4)--(1,2)--cycle;
 \draw (3,1)--(8,1);
 \draw (5,4)--(7,3);
\draw (5,4)--(1,3);
 \draw (3,5)--(8,5);
 \draw (8,1)--(7,2)--(7,4)--(8,5)--(9,4)--(9,2)--cycle;
 \draw (3,1)--(0,1);
 \draw (3,5)--(0,5);
 \draw (3,1)--(3,3.35);
  \draw (3,5)--(3,3.65);
  \draw (5,4)--(3,2.5);
 \draw (0,1)node{$\bullet$};
 \draw (-1,2)node{$\bullet$};
 \draw (-1,3)node{$\bullet$};
 \draw (-1,4)node{$\bullet$};
 \draw (0,5)node{$\bullet$};
 \draw (1,4)node{$\bullet$};
 \draw (1,3)node{$\bullet$};
 \draw (1,2)node{$\bullet$};
 \draw (3,1)node{$\bullet$};
 \draw (3,5)node{$\bullet$};
 \draw (5,4)node{$\bullet$};
 \draw (8,1)node{$\bullet$};
 \draw (7,2)node{$\bullet$};
 \draw (7,3)node{$\bullet$};
 \draw (7,4)node{$\bullet$};
 \draw (8,5)node{$\bullet$};
 \draw (9,2)node{$\bullet$};
 \draw (9,3)node{$\bullet$};
 \draw (9,4)node{$\bullet$};
 \draw (3,2.5)node{$\odot$};
 \draw (1,2)node[left]{$a_1$};
 \draw (1,3)node[left]{$a_2$};
 \draw (1,4)node[left]{$a_3$};
  \draw (0,5)node[left]{$a_4$};
\draw (-1,4)node[left]{$a_5$};
 \draw (-1,3)node[left]{$a_6$};
  \draw (-1,2)node[left]{$a_7$};
 \draw (0,1)node[left]{$a_0$};
\draw (5,4)node[below]{$(Q)$};
 \draw (3,1)node[below]{$(O)$};
 \draw (3,5)node[above]{$(O')$};
 \draw (8,1)node[right]{$b_0$};
 \draw (7,2)node[right]{$b_1$};
 \draw (7,3)node[right]{$b_2$};
 \draw (7,4)node[right]{$b_3$};
 \draw (8,5)node[above]{$b_4$};
 \draw (9,4)node[right]{$b_5$};
 \draw (9,3)node[right]{$b_6$};
 \draw (9,2)node[right]{$b_7$};
  \draw (3,2.5)node[left]{$F$};
\end{tikzpicture}
}&
\raisebox{5.0em}{{\normalsize $\begin{matrix} \pi_6^{-1}(0)=a_0+a_1+\cdots +a_7,\\ \pi_6^{-1}(\infty)=b_0+b_1+\cdots +b_7. \end{matrix}$}}\\
\vspace{1mm}
\raisebox{8.8em}{$7$} & 
{\normalsize
\begin{tikzpicture}[scale=0.5]
 \draw (-1,0)--(0,1)--(0,4)--(-1,5);
 \draw (0,4)--(1,5);
 \draw (0,1)--(1,0);
 \draw (1,0)--(6,0);
 \draw (3,0)--(3,5);
 \draw (1,5)--(3,5);
 \draw (3,5)--(5,4);
 \draw (6,0)--(5,1)--(5,4)--(6,5)--(7,4)--(7,1)--cycle;
 \draw (-1,0)node{$\bullet$};
 \draw (0,1)node{$\bullet$};
 \draw (0,2)node{$\bullet$};
 \draw (0,3)node{$\bullet$};
 \draw (0,4)node{$\bullet$};
 \draw (-1,5)node{$\bullet$};
 \draw (1,5)node{$\bullet$};
 \draw (1,0)node{$\bullet$};
 \draw (3,0)node{$\bullet$};
 \draw (3,5)node{$\bullet$};
 \draw (6,0)node{$\bullet$};
 \draw (5,1)node{$\bullet$};
 \draw (5,2.5)node{$\bullet$};
 \draw (5,4)node{$\bullet$};
 \draw (6,5)node{$\bullet$};
 \draw (7,1)node{$\bullet$};
 \draw (7,2.5)node{$\bullet$};
 \draw (7,4)node{$\bullet$};
 \draw (3,2.5)node{$\odot$};
 \draw (-1,0)node[left]{$a_1$};
 \draw (0,1)node[left]{$a_2$};
 \draw (0,2)node[left]{$a_3$};
\draw (0,3)node[left]{$a_4$};
 \draw (0,4)node[left]{$a_5$};
 \draw (-1,5)node[left]{$a_6$};
 \draw (1,5)node[left]{$a_7$};
 \draw (1,0)node[left]{$a_0$};
 \draw (3,0)node[below]{$(O)$};
 \draw (3,5)node[above]{$(Q)$};
 \draw (6,0)node[right]{$b_0$};
 \draw (5,1)node[right]{$b_1$};
 \draw (5,2.5)node[right]{$b_2$};
 \draw (5,4)node[right]{$b_3$};
 \draw (6,5)node[right]{$b_4$};
 \draw (7,1)node[right]{$b_7$};
 \draw (7,2.5)node[right]{$b_6$};
 \draw (7,4)node[right]{$b_5$};
 \draw (3,2.5)node[left]{$F$};
\end{tikzpicture}
}&
\raisebox{5.0em}{{\normalsize $\begin{matrix} \pi_7^{-1}(0)=a_0+a_1+\cdots +a_7,\\ \pi_7^{-1}(\infty)=b_0+b_1+\cdots +b_7. \end{matrix}$}}\\
\vspace{1mm}\raisebox{8.8em}{$8$} & 
{\normalsize
\begin{tikzpicture}[scale=0.5]
 \draw (0,1)--(1,2)--(1,3)--(0,4);
 \draw (1,3)--(2,4);
 \draw (1,2)--(2,1);
 \draw (2,1)--(3,0)--(8,0);
 \draw (3,0)--(3,5);
 \draw (2,4)--(3,5);
 \draw (3,5)--(4.3,3);
 \draw (3,5)--(7,4);
 \draw (5,2)--(4.3,3)--(5.7,3)--cycle;
 \draw (3,0)--(5,2);
 \draw (8,0)--(7,1)--(7,4)--(8,5)--(9,4)--(9,1)--cycle;
 \draw (0,1)node{$\bullet$};
 \draw (2,1)node{$\bullet$};
 \draw (1,2)node{$\bullet$};
 \draw (1,3)node{$\bullet$};
 \draw (0,4)node{$\bullet$};
 \draw (2,4)node{$\bullet$};
  \draw (3,0)node{$\bullet$};
 \draw (3,5)node{$\bullet$};
 \draw (5,2)node{$\bullet$};
 \draw (4.3,3)node{$\bullet$};
 \draw (5.7,3)node{$\bullet$};
 \draw (8,0)node{$\bullet$};
 \draw (7,1)node{$\bullet$};
 \draw (7,2.5)node{$\bullet$};
 \draw (7,4)node{$\bullet$};
 \draw (8,5)node{$\bullet$};
 \draw (9,1)node{$\bullet$};
 \draw (9,2.5)node{$\bullet$};
 \draw (9,4)node{$\bullet$};
 \draw (3,2.5)node{$\odot$};
 \draw (0,1)node[left]{$a_1$};
 \draw (1,2)node[left]{$a_2$};
 \draw (1,3)node[left]{$a_3$};
\draw (2,4)node[left]{$a_5$};
 \draw (0,4)node[left]{$a_4$};
 \draw (2,1)node[left]{$a_0$};
 \draw (5,2)node[right]{$b_0$};
 \draw (4.5,3)node[above]{$b_1$};
 \draw (5.7,3)node[right]{$b_2$};
 \draw (3,0)node[below]{$(O)$};
 \draw (3,5)node[above]{$(Q)$};
 \draw (8,0)node[right]{$c_0$};
 \draw (7,1)node[right]{$c_1$};
 \draw (7,2.5)node[right]{$c_2$};
 \draw (7,4)node[right]{$c_3$};
 \draw (7,5)node[right]{$c_4$};
 \draw (9,1)node[right]{$c_7$};
 \draw (9,2.5)node[right]{$c_6$};
 \draw (9,4)node[right]{$c_5$};
 \draw (3,2.5)node[left]{$F$};
\end{tikzpicture}
}&
\raisebox{5.0em}{{\normalsize $\begin{matrix} \pi_8^{-1}(0)=a_0+a_1+\cdots +a_5,\\  \hspace{-5mm} \pi_8^{-1} (-\lambda_2)=b_0 +b_1 +b_2, \\ \pi_8^{-1}(\infty)=c_0+c_1+\cdots +c_7. \end{matrix}$}}\\
\vspace{1mm}
\raisebox{8.8em}{$9$} & 
{\normalsize
\begin{tikzpicture}[scale=0.5]
 \draw (0,1)--(-1,2)--(-1,3)--(0,4)--(1,3)--(1,2)--cycle;
 \draw (3,0)--(8,0);
 \draw (5,2)--(7,3);
\draw (5,2)--(1,2);
 \draw (3,5)--(8,5);
 \draw (8,0)--(7,1)--(7,4)--(8,5)--(9,4)--(9,1)--cycle;
 \draw (3,0)--(0,1);
 \draw (3,5)--(0,4);
 \draw (3,1.8)--(3,0);
  \draw (3,2.1)--(3,5);
 \draw (5,2)--(3,2.5);
 \draw (0,1)node{$\bullet$};
 \draw (-1,2)node{$\bullet$};
 \draw (-1,3)node{$\bullet$};
 \draw (0,4)node{$\bullet$};
 \draw (1,3)node{$\bullet$};
 \draw (1,2)node{$\bullet$};
 \draw (3,0)node{$\bullet$};
 \draw (3,5)node{$\bullet$};
 \draw (5,2)node{$\bullet$};
 \draw (8,0)node{$\bullet$};
 \draw (7,1)node{$\bullet$};
 \draw (7,2)node{$\bullet$};
 \draw (7,3)node{$\bullet$};
 \draw (7,4)node{$\bullet$};
 \draw (8,5)node{$\bullet$};
 \draw (9,1)node{$\bullet$};
 \draw (9,2)node{$\bullet$};
 \draw (9,3)node{$\bullet$};
 \draw (9,4)node{$\bullet$};
 \draw (3,2.5)node{$\odot$};
 \draw (1,2)node[left]{$a_1$};
 \draw (1,3)node[left]{$a_2$};
 \draw (0,4)node[left]{$a_3$};
 \draw (-1,3)node[left]{$a_4$};
  \draw (-1,2)node[left]{$a_5$};
 \draw (0,1)node[left]{$a_0$};
\draw (5,2)node[below]{$(Q)$};
 \draw (3,0)node[below]{$(O)$};
 \draw (3,5)node[above]{$(O')$};
 \draw (8,0)node[right]{$b_0$};
  \draw (7,1)node[right]{$b_1$};
 \draw (7,2)node[right]{$b_2$};
 \draw (7,3)node[right]{$b_3$};
 \draw (7,4)node[right]{$b_4$};
 \draw (8,5)node[above]{$b_5$};
 \draw (9,4)node[right]{$b_6$};
 \draw (9,3)node[right]{$b_7$};
 \draw (9,2)node[right]{$b_8$};
  \draw (9,1)node[right]{$b_9$};
  \draw (3,2.88)node[right]{$F$};
\end{tikzpicture}
}&
\raisebox{5.0em}{{\normalsize $\begin{matrix} \pi_9^{-1}(0)=a_0+a_1+\cdots +a_5,\\ \pi_9^{-1}(\infty)=b_0+b_1+\cdots +b_9. \end{matrix}$}}\\
\vspace{1mm}
\raisebox{8.8em}{$10$} & 
 {\normalsize
\begin{tikzpicture}[scale=0.5]
 \draw (0,1)--(1,2)--(1,3)--(-1,3)--(-1,2)--cycle;
 \draw (0,1)--(3,0)--(6,0);
 \draw (3,0)--(3,5);
 \draw (1,3)--(3,5);
 \draw (3,5)--(5,4);
 \draw (6,0)--(5,1)--(5,5)--(7,5)--(7,1)--cycle;
 \draw (0,1)node{$\bullet$};
 \draw (1,2)node{$\bullet$};
 \draw (1,3)node{$\bullet$};
 \draw (-1,3)node{$\bullet$};
 \draw (-1,2)node{$\bullet$};
 \draw (3,0)node{$\bullet$};
 \draw (3,5)node{$\bullet$};
 \draw (6,0)node{$\bullet$};
 \draw (5,1)node{$\bullet$};
 \draw (5,2)node{$\bullet$};
 \draw (5,3)node{$\bullet$};
 \draw (5,4)node{$\bullet$};
 \draw (5,5)node{$\bullet$};
 \draw (7,1)node{$\bullet$};
 \draw (7,2)node{$\bullet$};
\draw (7,3)node{$\bullet$};
 \draw (7,4)node{$\bullet$};
 \draw (7,5)node{$\bullet$};
 \draw (3,2.5)node{$\odot$};
 \draw (1,2)node[left]{$a_1$};
 \draw (0.7,3)node[above]{$a_2$};
\draw (-1,3)node[left]{$a_3$};
 \draw (-1,2)node[left]{$a_4$};
 \draw (0,1)node[left]{$a_0$};
 \draw (3,0)node[below]{$(O)$};
 \draw (3,5)node[above]{$(Q)$};
 \draw (6,0)node[right]{$b_0$};
 \draw (5,1)node[right]{$b_1$};
 \draw (5,2)node[right]{$b_2$};
\draw (5,3)node[right]{$b_3$};
 \draw (5,4)node[right]{$b_4$};
 \draw (5,5)node[above]{$b_5$};
 \draw (7,1)node[right]{$b_{10}$};
 \draw (7,2)node[right]{$b_9$};
 \draw (7,3)node[right]{$b_8$};
 \draw (7,4)node[right]{$b_7$};
 \draw (7,5)node[right]{$b_6$};
 \draw (3,2.5)node[left]{$F$};
\end{tikzpicture}
}&
\raisebox{5.0em}{{\normalsize $\begin{matrix} \pi_{10}^{-1}(0)=a_0+a_1+\cdots +a_4,\\ \hspace{2mm} \pi_{10}^{-1}(\infty)=b_0+b_1+\cdots +b_{10}. \end{matrix}$}}\\
\vspace{1mm}\raisebox{8.8em}{$11$} & 
{\normalsize
\begin{tikzpicture}[scale=0.5]
 \draw (0,0)--(0,4);
 \draw (0,2)--(-2,2);
 \draw (3,0)--(3,5);
 \draw (3,5)--(5,4);
 \draw (0,0)--(3,0);
 \draw (0,4)--(3,5);
 \draw (3,0)--(6,0);
 \draw (6,0)--(5,1)--(5,4)--(7,4)--(7,1)--cycle;
 \draw (0,0)node{$\bullet$};
 \draw (0,1)node{$\bullet$};
 \draw (0,2)node{$\bullet$};
 \draw (0,3)node{$\bullet$};
 \draw (0,4)node{$\bullet$};
 \draw (-1,2)node{$\bullet$};
\draw (-2,2)node{$\bullet$};
 \draw (3,0)node{$\bullet$};
 \draw (3,5)node{$\bullet$};
 \draw (6,0)node{$\bullet$};
 \draw (5,1)node{$\bullet$};
 \draw (5,2)node{$\bullet$};
 \draw (5,3)node{$\bullet$};
 \draw (5,4)node{$\bullet$};
 \draw (7,1)node{$\bullet$};
 \draw (7,2)node{$\bullet$};
\draw (7,3)node{$\bullet$};
 \draw (7,4)node{$\bullet$};
 \draw (3,2.5)node{$\odot$};
 \draw (0,2)node[right]{$a_2$};
 \draw (0,1)node[left]{$a_1$};
\draw (0,3)node[left]{$a_3$};
 \draw (0,4)node[left]{$a_4$};
 \draw (0,0)node[left]{$a_0$};
 \draw (-1,2)node[above]{$a_5$};
 \draw (-2,2)node[above]{$a_6$};
 \draw (3,0)node[below]{$(O)$};
 \draw (3,5)node[above]{$(Q)$};
 \draw (6,0)node[right]{$b_0$};
 \draw (5,1)node[right]{$b_1$};
 \draw (5,2)node[right]{$b_2$};
\draw (5,3)node[right]{$b_3$};
 \draw (5,4.4)node[right]{$b_4$};
 \draw (7,1)node[right]{$b_{8}$};
 \draw (7,2)node[right]{$b_7$};
 \draw (7,3)node[right]{$b_6$};
 \draw (7,4)node[right]{$b_5$};
 \draw (3,2.5)node[left]{$F$};
\end{tikzpicture}
}&
\raisebox{5.0em}{{\normalsize $\begin{matrix} \pi_{11}^{-1}(0)=a_0+a_1+\cdots +a_6,\\ \pi_{11}^{-1}(\infty)=b_0+b_1+\cdots +b_8. \end{matrix}$}}\\
\vspace{1mm}\raisebox{8.8em}{$12$} & 
{\normalsize
\begin{tikzpicture}[scale=0.5]
 \draw (0,1)--(-1,2)--(-1,3)--(0,4)--(1,3)--(1,2)--cycle;
 \draw (3,0)--(8,0);
 \draw (3,0)--(3,5);
 \draw (3,5)--(4.3,3);
 \draw (3,5)--(7,3);
 \draw (5,2)--(4.3,3)--(5.7,3)--cycle;
 \draw (3,0)--(5,2);
 \draw (8,0)--(7,1)--(7,4)--(9,4)--(9,1)--cycle;
 \draw (3,0)--(0,1);
 \draw (3,5)--(1,3);
 \draw (0,1)node{$\bullet$};
 \draw (-1,2)node{$\bullet$};
 \draw (-1,3)node{$\bullet$};
 \draw (0,4)node{$\bullet$};
 \draw (1,3)node{$\bullet$};
 \draw (1,2)node{$\bullet$};
 \draw (3,0)node{$\bullet$};
 \draw (3,5)node{$\bullet$};
 \draw (5,2)node{$\bullet$};
 \draw (4.3,3)node{$\bullet$};
 \draw (5.7,3)node{$\bullet$};
 \draw (8,0)node{$\bullet$};
 \draw (7,1)node{$\bullet$};
 \draw (7,2)node{$\bullet$};
 \draw (7,3)node{$\bullet$};
 \draw (7,4)node{$\bullet$};
 \draw (9,1)node{$\bullet$};
 \draw (9,2)node{$\bullet$};
 \draw (9,3)node{$\bullet$};
 \draw (9,4)node{$\bullet$};
 \draw (3,2.5)node{$\odot$};
 \draw (1,2)node[left]{$a_1$};
 \draw (1,3)node[left]{$a_2$};
 \draw (0,4)node[left]{$a_3$};
\draw (-1,3)node[left]{$a_4$};
 \draw (-1,2)node[left]{$a_5$};
 \draw (0,1)node[left]{$a_0$};
 \draw (5,2)node[right]{$b_0$};
 \draw (4.5,3)node[above]{$b_1$};
 \draw (5.7,3)node[right]{$b_2$};
 \draw (3,0)node[below]{$(O)$};
 \draw (3,5)node[above]{$(Q)$};
 \draw (8,0)node[right]{$c_0$};
 \draw (7,1)node[right]{$c_1$};
 \draw (7,2)node[right]{$c_2$};
 \draw (7,3)node[right]{$c_3$};
 \draw (7,4)node[above]{$c_4$};
 \draw (9,4)node[right]{$c_5$};
 \draw (9,3)node[right]{$c_6$};
 \draw (9,2)node[right]{$c_7$};
 \draw (9,1)node[right]{$c_8$};
 \draw (3,2.5)node[left]{$F$};
\end{tikzpicture}
}&
\raisebox{5.0em}{{\normalsize $\begin{matrix} \pi_{12}^{-1}(0)=a_0+a_1+\cdots +a_5,\\  \hspace{-5mm} \pi_{12}^{-1} (-\lambda_2)=b_0 +b_1 +b_2, \\ \pi_{12}^{-1}(\infty)=c_0+c_1+\cdots +c_8. \end{matrix}$}}
\\
\vspace{1mm}\raisebox{8.8em}{$13$} & 
{\normalsize
\begin{tikzpicture}[scale=0.5]
 \draw (0,1)--(-1,2)--(-1,3)--(0,4)--(1,3)--(1,2)--cycle;
 \draw (3,0)--(9,0);
 \draw (9,0)--(8,1)--(8,3)--(9,4)--(10,3)--(10,1)--cycle;
 \draw (3,0)--(0,1);
 \draw (3,5)--(0,4);
 \draw (3,2.9)--(3,0);
  \draw (3,3.1)--(3,5);
 \draw (5,4)--(1,2);
 \draw (3,0)--(6,1);
 \draw (5,4)--(6,2);
 \draw (5.9,1)--(5.9,2);
 \draw (6.1,1)--(6.1,2);
 \draw (4.6,3.4)--(6,2);
  \draw (3,5)--(4.2,3.8);
 \draw (3,5)--(9,4);
 \draw (3,2.5)--(5,4);
 \draw (5,4)--(8,2);
 \draw (0,1)node{$\bullet$};
 \draw (-1,2)node{$\bullet$};
 \draw (-1,3)node{$\bullet$};
 \draw (0,4)node{$\bullet$};
 \draw (1,3)node{$\bullet$};
 \draw (1,2)node{$\bullet$};
 \draw (3,0)node{$\bullet$};
 \draw (3,5)node{$\bullet$};
 \draw (5,4)node{$\bullet$};
 \draw (6,1)node{$\bullet$};
\draw (6,2)node{$\bullet$};
 \draw (9,0)node{$\bullet$};
 \draw (8,1)node{$\bullet$};
 \draw (8,2)node{$\bullet$};
 \draw (8,3)node{$\bullet$};
 \draw (9,4)node{$\bullet$};
 \draw (10,1)node{$\bullet$};
 \draw (10,2)node{$\bullet$};
 \draw (10,3)node{$\bullet$};
 \draw (3,2.5)node{$\odot$};
 \draw (1,2)node[left]{$a_1$};
 \draw (1,3)node[left]{$a_2$};
 \draw (0,4)node[left]{$a_3$};
 \draw (-1,3)node[left]{$a_4$};
  \draw (-1,2)node[left]{$a_5$};
 \draw (0,1)node[left]{$a_0$};
\draw (5,4)node[right]{$(Q)$};
 \draw (3,0)node[below]{$(O)$};
 \draw (3,5)node[above]{$(O')$};
 \draw (6,1)node[right]{$b_0$};
 \draw (6,2)node[right]{$b_1$};
 \draw (9,0)node[right]{$c_0$};
  \draw (8,1)node[right]{$c_1$};
 \draw (8,2)node[right]{$c_2$};
 \draw (8,3)node[right]{$c_3$};
 \draw (9,4)node[right]{$c_4$};
 \draw (10,3)node[right]{$c_5$};
 \draw (10,2)node[right]{$c_6$};
  \draw (10,1)node[right]{$c_7$};
  \draw (3,2.3)node[right]{$F$};
\end{tikzpicture}
}&
\raisebox{5.0em}{{\normalsize $\begin{matrix} \pi_{13}^{-1}(0)=a_0+a_1+\cdots +a_5,\\  \hspace{-12.5mm} \pi_{13}^{-1} (-\frac{\lambda_4}{\lambda_3})=b_0 +b_1, \\ \pi_{13}^{-1}(\infty)=c_0+c_1+\cdots +c_7. \end{matrix}$ }}\\
\vspace{1mm}
\raisebox{8.8em}{$14$}  &
 {\normalsize
\begin{tikzpicture}[scale=0.5]
 \draw (0,1)--(1,2)--(1,4)--(-1,4)--(-1,2)--cycle;
 \draw (0,1)--(3,0)--(6,0);
 \draw (3,0)--(3,5);
 \draw (1,4)--(3,5);
 \draw (3,5)--(5,4);
 \draw (6,0)--(5,1)--(5,4)--(6,5)--(7,4)--(7,1)--cycle;
 \draw (0,1)node{$\bullet$};
 \draw (1,2)node{$\bullet$};
 \draw (1,3)node{$\bullet$};
 \draw (1,4)node{$\bullet$};
 \draw (-1,4)node{$\bullet$};
 \draw (-1,3)node{$\bullet$};
 \draw (-1,2)node{$\bullet$};
 \draw (3,0)node{$\bullet$};
 \draw (3,5)node{$\bullet$};
 \draw (6,0)node{$\bullet$};
 \draw (5,1)node{$\bullet$};
 \draw (5,2.5)node{$\bullet$};
 \draw (5,4)node{$\bullet$};
 \draw (6,5)node{$\bullet$};
 \draw (7,1)node{$\bullet$};
 \draw (7,2.5)node{$\bullet$};
 \draw (7,4)node{$\bullet$};
 \draw (3,2.5)node{$\odot$};
 \draw (1,2)node[left]{$a_1$};
 \draw (1,3)node[left]{$a_2$};
 \draw (0.8,4)node[above]{$a_3$};
\draw (-1,4)node[left]{$a_4$};
 \draw (-1,3)node[left]{$a_5$};
 \draw (-1,2)node[left]{$a_6$};
 \draw (0,1)node[left]{$a_0$};
 \draw (3,0)node[below]{$(O)$};
 \draw (3,5)node[above]{$(Q)$};
 \draw (6,0)node[right]{$b_0$};
 \draw (5,1)node[right]{$b_1$};
 \draw (5,2.5)node[right]{$b_2$};
 \draw (5,4)node[right]{$b_3$};
 \draw (6,5)node[right]{$b_4$};
 \draw (7,1)node[right]{$b_7$};
 \draw (7,2.5)node[right]{$b_6$};
 \draw (7,4)node[right]{$b_5$};
 \draw (3,2.5)node[left]{$F$};
\end{tikzpicture}
}&
\raisebox{5.0em}{{\normalsize $\begin{matrix} \pi_{14}^{-1}(0)=a_0+a_1+\cdots +a_6,\\ \pi_{14}^{-1}(\infty)=b_0+b_1+\cdots +b_7. \end{matrix}$}}
\\
\vspace{1mm}\raisebox{8.8em}{$15$} & 
{\normalsize
\begin{tikzpicture}[scale=0.5]
 \draw (0,1)--(1,2)--(1,3)--(-1,3)--(-1,2)--cycle;
 \draw (0,1)--(3,0);
 \draw (3,0)--(8,0);
 \draw (3,0)--(3,5);
 \draw (3,5)--(4.3,3);
 \draw (3,5)--(7,4);
 \draw (5,2)--(4.3,3)--(5.7,3)--cycle;
 \draw (3,0)--(5,2);
 \draw (1,3)--(3,5);
 \draw (8,0)--(7,1)--(7,4)--(8,5)--(9,4)--(9,1)--cycle;
 \draw (0,1)node{$\bullet$};
 \draw (1,2)node{$\bullet$};
 \draw (1,3)node{$\bullet$};
 \draw (-1,3)node{$\bullet$};
 \draw (-1,2)node{$\bullet$};
 \draw (3,0)node{$\bullet$};
 \draw (3,5)node{$\bullet$};
 \draw (5,2)node{$\bullet$};
 \draw (4.3,3)node{$\bullet$};
 \draw (5.7,3)node{$\bullet$};
 \draw (8,0)node{$\bullet$};
 \draw (7,1)node{$\bullet$};
 \draw (7,2.5)node{$\bullet$};
 \draw (7,4)node{$\bullet$};
 \draw (8,5)node{$\bullet$};
 \draw (9,1)node{$\bullet$};
 \draw (9,2.5)node{$\bullet$};
 \draw (9,4)node{$\bullet$};
 \draw (3,2.5)node{$\odot$};
 \draw (1,2)node[left]{$a_1$};
 \draw (0.9,3)node[above]{$a_2$};
 \draw (-1,3)node[left]{$a_3$};
\draw (-1,2)node[left]{$a_4$};
 \draw (0,1)node[left]{$a_0$};
 \draw (5,2)node[right]{$b_0$};
 \draw (4.5,3)node[above]{$b_1$};
 \draw (5.7,3)node[right]{$b_2$};
 \draw (3,0)node[below]{$(O)$};
 \draw (3,5)node[above]{$(Q)$};
 \draw (8,0)node[right]{$c_0$};
 \draw (7,1)node[right]{$c_1$};
 \draw (7,2.5)node[right]{$c_2$};
 \draw (7,4)node[right]{$c_3$};
 \draw (7,5)node[right]{$c_4$};
 \draw (9,1)node[right]{$c_7$};
 \draw (9,2.5)node[right]{$c_6$};
 \draw (9,4)node[right]{$c_5$};
 \draw (3,2.5)node[left]{$F$};
\end{tikzpicture}
}&
\raisebox{5.0em}{{\normalsize $\begin{matrix} \pi_{15}^{-1}(0)=a_0+a_1+\cdots +a_4,\\  \hspace{-5mm} \pi_{15}^{-1} (-\lambda_2)=b_0 +b_1 +b_2, \\ \pi_{15}^{-1}(\infty)=c_0+c_1+\cdots +c_7. \end{matrix}$}}\\
 \vspace{1mm}\raisebox{8.8em}{$16$} & 
  {\normalsize
\begin{tikzpicture}[scale=0.5]
 \draw (0,1)--(1,2)--(1,3)--(-1,3)--(-1,2)--cycle;
 \draw (0,1)--(3,0)--(6,0);
 \draw (3,0)--(3,5);
 \draw (1,3)--(3,5);
 \draw (3,5)--(5,4);
 \draw (6,0)--(5,1)--(5,4)--(6,5)--(7,4)--(7,1)--cycle;
 \draw (0,1)node{$\bullet$};
 \draw (1,2)node{$\bullet$};
 \draw (1,3)node{$\bullet$};
 \draw (-1,3)node{$\bullet$};
 \draw (-1,2)node{$\bullet$};
 \draw (3,0)node{$\bullet$};
 \draw (3,5)node{$\bullet$};
 \draw (6,0)node{$\bullet$};
 \draw (5,1)node{$\bullet$};
 \draw (5,2)node{$\bullet$};
 \draw (5,3)node{$\bullet$};
 \draw (5,4)node{$\bullet$};
 \draw (6,5)node{$\bullet$};
 \draw (7,1)node{$\bullet$};
 \draw (7,2)node{$\bullet$};
\draw (7,3)node{$\bullet$};
 \draw (7,4)node{$\bullet$};
 \draw (3,2.5)node{$\odot$};
 \draw (1,2)node[left]{$a_1$};
 \draw (0.7,3)node[above]{$a_2$};
\draw (-1,3)node[left]{$a_3$};
 \draw (-1,2)node[left]{$a_4$};
 \draw (0,1)node[left]{$a_0$};
 \draw (3,0)node[below]{$(O)$};
 \draw (3,5)node[above]{$(Q)$};
 \draw (6,0)node[right]{$b_0$};
 \draw (5,1)node[right]{$b_1$};
 \draw (5,2)node[right]{$b_2$};
\draw (5,3)node[right]{$b_3$};
 \draw (5,4)node[right]{$b_4$};
 \draw (6,5)node[right]{$b_5$};
 \draw (7,1)node[right]{$b_9$};
 \draw (7,2)node[right]{$b_8$};
 \draw (7,3)node[right]{$b_7$};
 \draw (7,4)node[right]{$b_6$};
 \draw (3,2.5)node[left]{$F$};
\end{tikzpicture}
}&
\raisebox{5.0em}{{\normalsize $\begin{matrix} \pi_{16}^{-1}(0)=a_0+a_1+\cdots +a_4,\\ \pi_{16}^{-1}(\infty)=b_0+b_1+\cdots +b_9. \end{matrix}$}}
\\
  \vspace{1mm}\raisebox{8.8em}{$17$} &
  {\normalsize
\begin{tikzpicture}[scale=0.5]
 \draw (0,1)--(-1,2)--(-1,3)--(0,4)--(1,3)--(1,2)--cycle;
 \draw (3,0)--(9,1);
 \draw (9,1)--(8,2)--(8,3)--(9,4)--(10,3)--(10,2)--cycle;
 \draw (3,0)--(0,1);
 \draw (3,5)--(0,4);
 \draw (3,2.9)--(3,0);
  \draw (3,3.1)--(3,5);
 \draw (5,4)--(1,2);
 \draw (3,0)--(6,1);
 \draw (5,4)--(6,2);
 \draw (5.9,1)--(5.9,2);
 \draw (6.1,1)--(6.1,2);
 \draw (3,0)--(4.5,1);
 \draw (4.4,1)--(4.4,2);
 \draw (4.6,1)--(4.6,2);
 \draw (4.55,3.45)--(4.7,3.3);
 \draw (4.9,3.1)--(6,2);
  \draw (3,5)--(4.2,3.8);
 \draw (3,5)--(9,4);
 \draw (3,2.5)--(5,4);
 \draw (5,4)--(8,2);
 \draw (3,5)--(3.7,3.6);
 \draw (4,3)--(4.5,2);
  \draw (3.84,3.32)--(3.88,3.24);
 \draw (5,4)--(4.5,2);
 \draw (0,1)node{$\bullet$};
 \draw (-1,2)node{$\bullet$};
 \draw (-1,3)node{$\bullet$};
 \draw (0,4)node{$\bullet$};
 \draw (1,3)node{$\bullet$};
 \draw (1,2)node{$\bullet$};
 \draw (3,0)node{$\bullet$};
 \draw (3,5)node{$\bullet$};
 \draw (5,4)node{$\bullet$};
 \draw (4.5,1)node{$\bullet$};
\draw (4.5,2)node{$\bullet$};
 \draw (6,1)node{$\bullet$};
\draw (6,2)node{$\bullet$};
 \draw (9,1)node{$\bullet$};
 \draw (8,2)node{$\bullet$};
 \draw (8,3)node{$\bullet$};
 \draw (9,4)node{$\bullet$};
 \draw (10,2)node{$\bullet$};
 \draw (10,3)node{$\bullet$};
 \draw (3,2.5)node{$\odot$};
 \draw (1,2)node[left]{$a_1$};
 \draw (1,3)node[left]{$a_2$};
 \draw (0,4)node[left]{$a_3$};
 \draw (-1,3)node[left]{$a_4$};
  \draw (-1,2)node[left]{$a_5$};
 \draw (0,1)node[left]{$a_0$};
\draw (5,4)node[right]{$(Q)$};
 \draw (3,0)node[below]{$(O)$};
 \draw (3,5)node[above]{$(O')$};
 \draw (4.5,1)node[right]{$b_0$};
 \draw (4.5,2)node[right]{$b_1$};
 \draw (6,1)node[right]{$c_0$};
 \draw (6,2)node[right]{$c_1$};
  \draw (9,1)node[right]{$d_0$};
 \draw (8,2)node[right]{$d_1$};
 \draw (8,3)node[right]{$d_2$};
 \draw (9,4)node[right]{$d_3$};
 \draw (10,3)node[right]{$d_4$};
 \draw (10,2)node[right]{$d_5$};
  \draw (3,2.1)node[left]{$F$};
\end{tikzpicture}
}&
\raisebox{5.0em}{{\normalsize $\begin{matrix} \pi_{17}^{-1}(0)=a_0+a_1+\cdots +a_5,\\  \hspace{-12.5mm} \pi_{17}^{-1} (-\lambda_4)=b_0 +b_1, \\ 
 \hspace{-11mm} \pi_{17}^{-1} (-\frac{\lambda_2}{\lambda_5})=c_0 +c_1,,\\ \pi_{17}^{-1}(\infty)=d_0+d_1+\cdots +c_5. \end{matrix}$}}\\
   \raisebox{8.8em}{$18$} & 
  {\normalsize
\begin{tikzpicture}[scale=0.5]
 \draw (0,1)--(-1,2)--(-1,4)--(1,4)--(1,2)--cycle;
 \draw (3,0)--(8,1);
 \draw (3,0)--(3,5);
 \draw (3,5)--(4.3,3);
 \draw (3,5)--(7,3);
 \draw (5,2)--(4.3,3)--(5.7,3)--cycle;
 \draw (3,0)--(5,2);
 \draw (8,1)--(7,2)--(7,3)--(9,3)--(9,2)--cycle;
 \draw (3,0)--(0,1);
 \draw (3,5)--(1,4);
 \draw (0,1)node{$\bullet$};
 \draw (-1,2)node{$\bullet$};
 \draw (-1,3)node{$\bullet$};
 \draw (-1,4)node{$\bullet$};
 \draw (1,4)node{$\bullet$};
 \draw (1,3)node{$\bullet$};
 \draw (1,2)node{$\bullet$};
 \draw (3,0)node{$\bullet$};
 \draw (3,5)node{$\bullet$};
 \draw (5,2)node{$\bullet$};
 \draw (4.3,3)node{$\bullet$};
 \draw (5.7,3)node{$\bullet$};
 \draw (8,1)node{$\bullet$};
 \draw (7,2)node{$\bullet$};
 \draw (7,3)node{$\bullet$};
 \draw (9,2)node{$\bullet$};
 \draw (9,3)node{$\bullet$};
 \draw (3,2.5)node{$\odot$};
 \draw (1,2)node[left]{$a_1$};
 \draw (1,3)node[left]{$a_2$};
 \draw (1,4)node[above]{$a_3$};
\draw (-1,4)node[left]{$a_4$};
 \draw (-1,3)node[left]{$a_5$};
 \draw (-1,2)node[left]{$a_6$};
 \draw (0,1)node[left]{$a_0$};
 \draw (5,2)node[right]{$b_0$};
 \draw (4.5,3)node[above]{$b_1$};
 \draw (5.7,3)node[right]{$b_2$};
 \draw (3,0)node[below]{$(O)$};
 \draw (3,5)node[above]{$(Q)$};
 \draw (8,1)node[right]{$c_0$};
 \draw (7,2)node[right]{$c_1$};
 \draw (7,3)node[above]{$c_2$};
 \draw (9,3)node[right]{$c_3$};
 \draw (9,2)node[right]{$c_4$};
 \draw (3,2.5)node[left]{$F$};
\end{tikzpicture}
}&
\raisebox{5.0em}{{\normalsize $\begin{matrix} \pi_{18}^{-1}(0)=a_0+a_1+\cdots +a_6,\\  \hspace{-5mm} \pi_{18}^{-1} (-\lambda_5)=b_0 +b_1 +b_2, \\ \pi_{18}^{-1}(\infty)=c_0+c_1+\cdots +c_4. \end{matrix}$}}\\
\hline\\
\end{longtable}

\begin{prop}\label{PropEvident}
For each $k\in \{6,\ldots, 18\},$
let $\ell_k$ be the number of  vertices of the Fano polytope  $P_k$,
 $L_k$ be the lattice given in Table 1
 and $S_k$ be a very general member of the family $\mathscr{F}_k$.
Then, there is a sublattice $E_k$ of ${\rm NS}(S_k)$,
which we call the evident lattice of  $S_k$,
satisfying ${\rm rank} (E_k) =23 - \ell_k$ and $|\det (E_k)| = |\det(L_k)|.$
The generators of $E_k$ are explicitly  given in Table 6.
\end{prop}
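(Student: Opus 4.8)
The plan is to take $E_k$ to be the lattice spanned by the explicit divisor classes recorded in Table 9 and to establish its two stated invariants by a direct computation of its Gram matrix from the geometry of $\pi_k$ displayed in Tables 7 and 8. Concretely, the generators are the class of a general fibre $F$, the zero section $(O)$, the additional section $(Q)$ --- together with the extra section $(O')$ in the cases $k\in\{6,9,13,17\}$ --- and, for each reducible singular fibre, all of its irreducible components except the one meeting $(O)$, so that these selected components freely generate the root lattice ($A_{m_v-1}$, $D_\ast$ or $E_\ast$) attached to the Kodaira type listed in Table 7. Since each of these classes is represented by an effective divisor on $S_k$, the lattice $E_k$ they generate is a sublattice of ${\rm NS}(S_k)$ by construction.

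First I would assemble the Gram matrix from the standard intersection rules on a Jacobian elliptic $K3$ surface: $F^2=0$ and $F\cdot(P)=1$ for every section $P$; $(P)^2=-2$ for every section, since each section is a smooth rational curve on a $K3$ surface and adjunction gives self-intersection $-2$; $F\cdot C=0$ for every fibre component $C$; and the mutual intersections of the components of a given fibre read off from the extended Dynkin diagram drawn in Table 8. The only inputs requiring genuine geometric analysis are the intersection numbers between the sections $(O),(O'),(Q)$ and the fibre components, together with the pairings $(O)\cdot(O')$, $(O)\cdot(Q)$ and $(O')\cdot(Q)$. These I would determine from the explicit parametrisations of the sections in Table 7, tracking through which component of each singular fibre a given section passes, exactly as encoded by the incidences of the dual graphs.

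With the Gram matrix in hand, the rank assertion follows from the Shioda--Tate formula: the classes $F,(O)$ and the selected fibre components span the trivial lattice, of rank $2+\sum_v(m_v-1)$, and adjoining the free section $(Q)$ raises the rank by one, while the torsion section $(O')$, when present, lies in the rational span of the preceding classes and so leaves the rank unchanged. Reading the Kodaira types off Table 7 and summing over the reducible fibres gives ${\rm rank}(E_k)=2+\sum_v(m_v-1)+1=23-\ell_k$ in each case. The determinant assertion is then a direct, if lengthy, evaluation of the explicit integral Gram matrix, which I would organise so that the hyperbolic plane $\langle F,(O)\rangle\cong U$ splits off first, leaving a negative-definite block to be compared with $|\det(L_k)|$ from Table 1; the point is that, in the cases $k\in\{6,9,13,17\}$, the torsion section $(O')$ refines the trivial lattice by a finite index, and this index reduction is precisely what brings $|\det(E_k)|$ down to the value of Table 1.

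I expect the main obstacle to be the correct determination of the section-to-component incidences, since a single misidentified intersection propagates into both the rank and the determinant. This is exactly where the careful choice of the fibration $\pi_k$ in \cite{M1} pays off, as it makes each section pass through an easily identifiable component, so the incidences can be verified from the explicit sections of Table 7. The determinant computations themselves, of size up to $17\times 17$, are mechanical once the Gram matrix is correctly set up, and I would carry them out by block row and column reduction case by case.
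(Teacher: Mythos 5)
Your overall method is the same as the paper's own proof, which consists of exactly the computation you outline: record the standard rules $(F\cdot F)=0$, $(R\cdot R)=-2$, $(F\cdot R)=1$ for a fibre and sections, read the remaining incidences off Tables 7 and 8, and evaluate the Gram matrices of the Table 9 classes directly (the explicit matrices being delegated to \cite{M1}). The genuine flaw is that the generating set you describe is \emph{not} the set of Table 9 in precisely the four torsion cases $k\in\{6,9,13,17\}$. There Table 9 omits one fibre component \emph{beyond} the identity components ($a_1$ for $k=6$, $b_9$ for $k=9$, $a_5$ for $k=13$ and $k=17$), so that the list has exactly $23-\ell_k$ members and can serve as a basis. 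Your set --- all non-identity components together with $(O)$, $(Q)$ and $(O')$ --- has $24-\ell_k$ members and is linearly dependent, so the ``determinant of the explicit integral Gram matrix'' is identically zero as you have set it up. The index-squared correction you gesture at can repair this, but only if you determine the order of $(O')$ modulo the trivial lattice and, moreover, check that the $\mathbb{Z}$-span of your set coincides with the $\mathbb{Z}$-span of Table 9's list; that amounts to exhibiting the integral relation satisfied by $(O')$ and verifying that the dropped component occurs in it with coefficient $\pm 1$. The efficient fix, and what the paper actually does, is to compute with Table 9's $23-\ell_k$ classes as they stand: a nonzero Gram determinant then yields the rank and the determinant claims simultaneously.

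A second, related issue is logical order. Your rank argument invokes Shioda--Tate together with the assertions that $(Q)$ is of infinite order and $(O')$ is torsion; in this paper those assertions are Theorem \ref{ThmMW}, which is proved \emph{from} Proposition \ref{PropEvident} via Theorems \ref{ThmPic} and \ref{ThmNSEvident}, so assuming them here is circular. The circularity is avoidable, since linear independence of the Table 9 classes follows from the nonvanishing of the very determinant you must compute, with no input about the Mordell--Weil group --- but then the Shioda--Tate framing should be dropped rather than relied upon. Finally, your claim that $2+\sum_v(m_v-1)+1=23-\ell_k$ holds ``in each case'' fails for the row $k=12$ as printed: the configuration $I_6+I_3+I_9$ gives $18$ while $23-\ell_{12}=17$, and the sixteen generators listed in Table 9 for $k=12$ cannot have rank $17$. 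This appears to be an inconsistency (likely a typo) in the paper's tables rather than a defect of your plan, but a proof by direct computation has to notice and resolve it rather than assert the uniform formula.
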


\begin{longtable}{llll}
  \caption{\normalsize{Evident lattices $E_k$ }}
  \vspace{-5.5mm}\\
  \\
\hline
  $k$ & $E_k$ & ${\rm rank}(E_k)$ & $|\det(E_k)|=|\det (L_k)|$   
    \\
  \hline
  \endhead
\vspace{1mm}$6$ &$\langle F, (O), (Q), (O'),  a_2 ,\ldots, a_7,b_1,\ldots,b_7 \rangle_\mathbb{Z}$ & \hspace{3.9mm} $17$ &\hspace{12.9mm} $ 16$   \\
\vspace{1mm} $7$ &  $\langle F, (O), (Q), a_1 ,\ldots, a_7,b_1,\ldots,b_7 \rangle_\mathbb{Z}$ &\hspace{3.9mm} $17$ &\hspace{12.9mm} $12$\\
\vspace{1mm}$8$  & $\langle F, (O), (Q), a_1 ,\ldots, a_5,b_1, b_2, c_1\ldots,c_7 \rangle_\mathbb{Z}$  &\hspace{3.9mm} $17$ &\hspace{12.9mm} $20$   \\
\vspace{1mm}$9$ & $\langle F, (O), (Q), (O'), a_1 ,\ldots,a_5, b_1,\ldots ,b_8 \rangle_\mathbb{Z}$ &\hspace{3.9mm} $17$  &\hspace{12.9mm} $16$    \\
\vspace{1mm}$10$ &$\langle F, (O),(Q), a_1 ,\ldots, a_4,b_1, \ldots,b_{10} \rangle_\mathbb{Z}$&\hspace{3.9mm} $17$  &\hspace{12.9mm} $14$   \\
\vspace{1mm}$11$ &$\langle F, (O),  (Q),a_1 ,\ldots, a_6,b_1, \ldots,b_{8} \rangle_\mathbb{Z}$  &\hspace{3.9mm} $17$ &\hspace{12.9mm} $12$  \\
\vspace{1mm}$12$ &$\langle F, (O), (Q), a_2 ,\ldots, a_5, b_1,b_2,  c_1\ldots,c_7 \rangle_\mathbb{Z}$ &\hspace{3.9mm} $17$ &\hspace{12.9mm} $18$   \\
\vspace{1mm}$13$ &$\langle F, (O), (Q),(O'), a_1 ,\ldots,a_4, b_1, c_1,\ldots ,c_7 \rangle_\mathbb{Z} $ &\hspace{3.9mm} $16$ &\hspace{12.9mm} $28$    \\
\vspace{1mm}$14$ &$\langle F, (O),  (Q),a_1 ,\ldots, a_6,b_1, \ldots,b_{7} \rangle_\mathbb{Z}$ &\hspace{3.9mm} $16$ &\hspace{12.9mm} $23$    \\
\vspace{1mm}$15$ &$\langle F, (O),(Q), a_1 ,\ldots,a_4, b_1,b_2, c_1,\ldots ,c_7 \rangle_\mathbb{Z}$ &\hspace{3.9mm} $16$ &\hspace{12.9mm} $31$   \\
\vspace{1mm}$16$ &$\langle F, (O),  (Q),a_1 ,\ldots, a_4,b_1, \ldots,b_9 \rangle_\mathbb{Z}$ &\hspace{3.9mm} $16$  &\hspace{12.9mm} $20$  \\
\vspace{1mm}$17$ &$\langle F, (O),  (Q), (O'), a_1 ,\ldots, a_4,b_1,c_1,d_1, \ldots,d_5 \rangle_\mathbb{Z}$ &\hspace{3.9mm} $15$ &\hspace{12.9mm} $48$   \\
\vspace{1mm}    $18$ & $\langle F, (O),(Q), a_1 ,\ldots,a_6, b_1,b_2, c_1,\ldots ,c_4 \rangle_\mathbb{Z}$ &\hspace{3.9mm} $15$ &\hspace{12.9mm} $44$  \\
\hline\\
\end{longtable} 

\begin{proof}
By applying the theory of \cite{Shioda}, we have the assertion.
\end{proof}

 \section{Period mappings and Picard numbers}
 
 In this section, we will see the rank of ${\rm NS} (S_k)$, where $S_k$ is a very general member of the family $\mathscr{F}_k$ of (\ref{Family}), is equal to the rank of $E_k$ in Proposition \ref{PropEvident}.
Our argument is based on the properties of  elliptic fibrations and period mappings of $K3$ surfaces.

For $k\in \{6,\ldots, 18\},$
take  $\lambda^0 =(\lambda_1^0,\ldots, \lambda_{l_k-3}^0) \in (\mathbb{C}^\times)^{\ell_k-3}$
such that $S_k^0 = S_k (\lambda^0)$ is a very general member of $\mathscr{F}_k$.
We call $S_k^0=S_k (\lambda^0)$  a reference surface of  $\mathscr{F}_k$.
Let us introduce a marking for $S_k(\lambda^0)$ using the evident lattice $E_k$ in Proposition \ref{PropEvident}.
According to the procedure,
$E_k$ is a sublattice of the N\'eron-Severi lattice ${\rm NS}(S_k^0)$.
Let $\widehat{E}_k$ be the primitive closure of $E_k$ in ${\rm NS}(S_k^0)$:
$\widehat{E}_k = (E_k \otimes_\mathbb{Z} \mathbb{Q}) \cap {\rm NS}(S_k^0)$.
Take a basis $\{\Gamma_{k,1},\ldots, \Gamma_{k,23 -\ell_k}\}$ of $\widehat{E}_k$. 
 Let us identify $H_2 (S_k^0,\mathbb{Z}) $ with  $L_{K3}$.
 This identification gives an isometry
 $\psi^0: H_2 (S_k^0,\mathbb{Z}) \simeq L_{K3}$.
 Put $\mathcal{E}_k = \psi^0 \left(\widehat{E}_k \right)$ and $\gamma_{k,j}  =\psi \left(\Gamma_{k,j} \right)$ ($j \in\{1,\ldots, 23-\ell_k\}$).
Then, $\mathcal{E}_k$ satisfies $\left(\psi^0\right)^{-1} (\mathcal{E}_k) \subset{\rm NS}(S_k^0)$.
We can  extend $\left\{\gamma_{k,1},\ldots, \gamma_{k,23-\ell_k}\right\}$ to  a basis $\left\{\gamma_{k,1},\ldots, \gamma_{k,22} \right\}$ of $L_{K3}$.
 Let $\left\{ \delta_{k,1},\ldots, \delta_{k,22}  \right\}$ be the dual basis of $\left\{\gamma_{k,1},\ldots, \gamma_{k,22}\right\}$ with respect to the unimodular lattice $L_{K3}$.

If we take a sufficiently small neighborhood $\mathcal{U}$ of $\lambda^0$, we have the  trivialization
$\tau: \{S_k(\lambda)\mid\lambda\in \mathcal{U}\} \rightarrow S_k^0 \times \mathcal{U}$.
Letting $\beta: S_k^0 \times \mathcal{U} \rightarrow S_k^0$ be the projection, 
we set $r=\beta \circ \tau.$
Then, for $\lambda\in \mathcal{U}$,
$\left(r^{(\lambda)}_k \right) ' := r|_{S_k(\lambda)}: S_k(\lambda)\rightarrow S_k^0 $ gives a $\mathcal{C}^\infty $-isomorphism of compact complex surfaces.
We obtain an isometry
$\psi^{(\lambda)}= \psi^0 \circ \left(r^{(\lambda)}_k \right) '_{*} : H_2(S_k(\lambda),\mathbb{Z}) \rightarrow L_{K3}$ for $\lambda \in \mathcal{U}$. 
We call the pair $(S_k(\lambda),\psi^{(\lambda)})$ an $S$-marked $K3$ surface on $\mathcal{U}$.
For $\lambda^1$ and $\lambda^2\in \mathcal{U}$,
if there exists a biholomorphic mapping $f : S_k (\lambda^1) \rightarrow S_k(\lambda^2)$ satisfying
$\left( \psi^{\left( \lambda^2 \right)} \circ f_* \circ \left(\psi^{\left( \lambda^1 \right)} \right)^{-1} \right)\Big|_{\mathcal{E}_k}$, 
we say that
$\left( S_k (\lambda^1),\psi^{\left( \lambda^1 \right)} \right)$ and $\left( S_k (\lambda^2), \psi^{\left( \lambda^2 \right)} \right)$ are equivalent as $S$-marked $K3$ surfaces.

Letting the notation be as above, we denote the intersection matrix of the lattice $\left\{ \delta_{k,23-\ell_k+1 },\ldots, \delta_{k,22}\right\}$ by $\textbf{A}_k$.
Set 
$\mathcal{D}_{\mathcal{E}_k}
=\{ \xi \in \mathbb{P}^{\ell_k-2} (\mathbb{C})\mid
\xi \textbf{A}_k {}^t \xi=0, \xi \textbf{A}_k {}^t \overline{\xi}>0\}$.
We remark that $\mathcal{D}_{\mathcal{E}_k}$ has two connected components.
Let $\mathcal{D}_k$ be a connected component of $\mathcal{D}_{\mathcal{E}_k}$.
Then, we have the local period mapping
$\Phi_k : \mathcal{U} \rightarrow \mathcal{D}_k$ given by
\begin{align}\label{PhiInt}
\lambda \mapsto \left(\int_{(\psi^{(\lambda)})^{-1}\left(\gamma_{k,23-\ell_k +1} \right) }  \omega_k : \cdots  :  \int_{(\psi^{(\lambda)})^{-1}\left(\gamma_{k,22} \right) }  \omega_k\right).
\end{align}
Here, $\omega_k$ is the unique holomorphic $2$-form on $S_k$ up to a constant factor.

Let $\pi: S\rightarrow \mathbb{P}^1 (\mathbb{C})$ be an elliptic fibration of a $K3$ surface $S$ with a general fibre $F$. 
If  $\pi$ is defined by the Weierstrass equation
$z^2 =4 y^3 -g_2(x) y - g_3(x)$, where $\pi $ is given by $(x,y,z)\mapsto x$,
then the $j$-invariant is given by
$j(x)=\frac{g_2^3(x)}{g_2^3(x)-27 g_3^2(x)}$.
Suppose $(S,\pi,\mathbb{P}^1(\mathbb{C}))$ and $(S',\pi',\mathbb{P}^1(\mathbb{C}))$ are two elliptic surfaces 
such that there exists a biholomorphic mapping $f: S \rightarrow S'$ and $\varphi \in {\rm Aut}(\mathbb{P}^1(\mathbb{C}))$
satisfying $\varphi \circ \pi = \pi' \circ f$.
Then, they are said to be isomorphic as elliptic fibrations.
In this case,
if $(S,\pi,\mathbb{P}^1(\mathbb{C}))$ ($(S',\pi',\mathbb{P}^1(\mathbb{C}))$, resp.) is defined by the Weierstrass equation with the $j$-invariant $j (x)$ ($j' (x)$, resp.),
then $j' \circ \varphi  = j$ holds (see \cite{Kod} or \cite{Ne}).
Moreover, 
if $\pi^{-1} (x)$ is a singular fibre,
then  $\pi'^{-1} (\varphi (x))$ is a singular fibre of the same type.

\begin{lem}\label{LemPsi}
For each $k\in \{6,\ldots,18\}$, let $S_k (\lambda^0)$ be a very general member of $\mathscr{F}_k$ and $\mathcal{U} $ be a small neighborhood of $\lambda^0$ as above.
Let $\pi_k^{(\lambda)}$ be the Jacobian elliptic fibration given by the equation (\ref{EquationJac}).
Take $\lambda^1$ and $\lambda^2 \in \mathcal{U}$.
If $\left(S_k(\lambda^1), \pi_k^{\left( \lambda^1 \right)}, \mathbb{P}^1 (\mathbb{C}) \right)$ and  $\left(S_k(\lambda^2), \pi_k^{\left( \lambda^2 \right)}, \mathbb{P}^1 (\mathbb{C}) \right)$ are isomorphic as elliptic surfaces,
then $\lambda^1 = \lambda^2$ holds.
\end{lem}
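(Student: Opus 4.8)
The plan is to exploit the two standard invariants of an isomorphism class of Jacobian elliptic surface over $\mathbb{P}^1(\mathbb{C})$: the $j$-function and the configuration (position together with Kodaira type) of the singular fibres. By the fact recalled just before the statement, an isomorphism $f$ of elliptic surfaces produces a $\varphi\in\mathrm{Aut}(\mathbb{P}^1(\mathbb{C}))$ with $j^{(\lambda^2)}\circ\varphi=j^{(\lambda^1)}$ such that $\varphi$ carries each singular fibre of $\pi_k^{(\lambda^1)}$ to a singular fibre of $\pi_k^{(\lambda^2)}$ of the same type. Since $\lambda^1,\lambda^2$ lie in the small neighbourhood $\mathcal{U}$ of the generic point $\lambda^0$, both surfaces realise the fibre configuration of Table 7; in particular the type-$I_1$ fibres sit over the simple roots of the discriminant, while the remaining, \emph{heavy}, fibres sit over $0$, over $\infty$, and (for $k\in\{8,12,13,15,17,18\}$) over the explicit extra points recorded there.

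First I would pin down $\varphi$. The heavy fibres over $0$ and $\infty$ are exactly the fibres of largest Euler number, or are additive (the $I_3^{*}$, $I_1^{*}$, $IV^{*}$ occurring for $k=7,8,11$), so $\varphi$ must permute the finite set of their base points. Whenever the types over $0$ and $\infty$ differ — which happens for every $k$ except the symmetric configurations $k=6$ and $k=17$, where both are the same $I_n$ — this forces $\varphi(0)=0$ and $\varphi(\infty)=\infty$, hence $\varphi(x_1)=c\,x_1$ for some $c\in\mathbb{C}^{\times}$; in the two symmetric cases $\varphi$ is a priori either $x_1\mapsto c\,x_1$ or $x_1\mapsto c/x_1$. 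When Table 7 records additional special fibres at finite nonzero points, the requirement that $\varphi$ preserve their types as well as $\{0,\infty\}$ rigidifies $\varphi$ further and restricts the scaling $c$ to finitely many values.

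Next I would determine $c$ and compare the parameters. An isomorphism of Weierstrass fibrations is $\varphi$ followed by a Weierstrass change $y_1\mapsto u(x_1)^2 y_1$, $z_1\mapsto u(x_1)^3 z_1$; applied to (\ref{EquationJac}) it imposes
\[
a_m^{(\lambda^2)}\bigl(\varphi(x_1)\bigr)=u(x_1)^{2m}\,a_m^{(\lambda^1)}(x_1)\qquad(m=1,2,3),
\]
equivalently $\Delta^{(\lambda^2)}\bigl(\varphi(x_1)\bigr)=u(x_1)^{12}\,\Delta^{(\lambda^1)}(x_1)$ for the discriminants. Substituting $\varphi(x_1)=c\,x_1$ (resp.\ $c/x_1$) together with the polynomials $a_m$ from Table 3, and matching coefficients in $x_1$, produces a system of polynomial equations in $c$, in $u$, and in the two parameter tuples. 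Working case by case, I expect these to force $c=1$, $u$ constant, and ultimately $\lambda^1=\lambda^2$; in the symmetric cases the substitution $x_1\mapsto c/x_1$ reverses the degree distribution of the $a_m$, and so is incompatible with matching their distinct leading and trailing coefficients unless it collapses to the identity, which rules it out.

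The main obstacle I anticipate is precisely this last, computational, stage: excluding the orientation-reversing substitution $x_1\mapsto c/x_1$ in the symmetric configurations $k=6,17$, and then extracting $\lambda^1=\lambda^2$ from the coefficient identities. Because the $a_m(x_1)$ in Table 3 are fully explicit, each case reduces to elementary but genuinely case-dependent algebra; the uniform conceptual input is only that the heavy fibres constrain $\varphi$ to the two shapes above, after which the explicit Weierstrass coefficients determine everything.
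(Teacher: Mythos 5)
Your proposal is correct and takes essentially the same route as the paper's own proof: the paper likewise uses the fact that an isomorphism of elliptic surfaces transports singular fibres to fibres of the same Kodaira type to force $\varphi(x_1)=\mu x_1$, and then concludes $\mu=1$ and $\lambda^1=\lambda^2$ by inspecting the explicit $j$-invariant $j_k^{(\lambda)}(x_1)$ together with the factor $\mathcal{P}_k^{(\lambda)}(x_1)$ of the discriminant locating the singular fibres away from $\{0,\infty\}$, deferring the case-by-case algebra exactly as you do. Two marginal remarks: since (\ref{EquationJac}) carries an $a_1(x_1)y_1^2$ term, an isomorphism may also involve a shift $y_1\mapsto u^2y_1+r(x_1)$, so your matching law $a_m^{(\lambda^2)}(\varphi(x_1))=u(x_1)^{2m}a_m^{(\lambda^1)}(x_1)$ should be imposed only after completing the cube (i.e., on $g_2,g_3$, or equivalently on $j$ and the discriminant, which is what the paper does); and your explicit treatment of the possible swap $x_1\mapsto c/x_1$ in the symmetric configurations $k=6,17$ (where the fibres over $0$ and $\infty$ have the same type) addresses a case the paper's proof passes over silently, so your version is, if anything, slightly more careful on that point.
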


\begin{proof}
By observing the Kodaira type of the singular fibres, together with the above mentioned  properties of the $j$-invariant for each $\pi_k$, 
we can prove this lemma via a similar argument to the proof of \cite{NaP} Lemma 1.1. 
\end{proof}

\begin{lem}\label{LemSEquiv}
For $k\in \{6,\ldots,18\}$, let $S_k (\lambda^0)$ be a very general member of $\mathscr{F}_k$ and $\mathcal{U} $ be a small neighborhood of $\lambda^0$ as above.
 Take the $S$-marked $K3$ surface $\left(S_k(\lambda), \psi^{(\lambda)} \right)$ for  $\lambda \in \mathcal{U}$ as above.
For $\lambda^1$ and $\lambda^2 \in \mathcal{U}$,
$\left(S_k(\lambda^1),\psi^{\left( \lambda^1 \right)} \right)$ and $\left(S_k(\lambda^2),\psi^{\left( \lambda^2 \right)} \right)$ are equivalent as $S$-marked $K3$ surfaces if and only if   $\lambda^1 = \lambda^2$ holds.
\end{lem}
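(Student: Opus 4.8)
The reverse implication is immediate: if $\lambda^1=\lambda^2$, the identity map on $S_k(\lambda^1)$ realizes the equivalence. So the content lies in the forward implication, and the plan is to reduce it to Lemma \ref{LemPsi} by showing that a marked equivalence forces an isomorphism of the underlying elliptic surfaces. Throughout I would keep the same biholomorphic map $f$ that witnesses the marked equivalence and show it respects the fibrations.

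First I would unwind the definition of equivalence: by hypothesis there is a biholomorphic map $f:S_k(\lambda^1)\to S_k(\lambda^2)$ such that $\psi^{(\lambda^2)}\circ f_*\circ(\psi^{(\lambda^1)})^{-1}$ restricts to the identity on $\mathcal{E}_k$. The crucial observation is that the class of a general fibre $F$ of $\pi_k$ is, by construction (Table 9), one of the generators of the evident lattice $E_k$, so its marking $\mathbf{f}=\psi^0([F])\in\mathcal{E}_k$ is among the classes fixed by the equivalence. Because the family $\mathscr{F}_k$ is $C^\infty$-trivial over $\mathcal{U}$ and the markings $\psi^{(\lambda)}=\psi^0\circ\left(r_k^{(\lambda)}\right)'_*$ are built from that trivialization, the class $(\psi^{(\lambda)})^{-1}(\mathbf{f})$ is exactly the fibre class of $\pi_k^{(\lambda)}$ for each $\lambda\in\mathcal{U}$. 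Applying the identity condition to $\mathbf{f}$ then gives $f_*\!\left[(\psi^{(\lambda^1)})^{-1}(\mathbf{f})\right]=(\psi^{(\lambda^2)})^{-1}(\mathbf{f})$; that is, $f_*$ sends the fibre class of $\pi_k^{(\lambda^1)}$ to the fibre class of $\pi_k^{(\lambda^2)}$, with no sign ambiguity since the marking is fixed exactly.

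Next I would promote this to an isomorphism of elliptic surfaces. On a $K3$ surface, an elliptic fibration coincides with the morphism defined by the complete linear system of its fibre class: $\pi_k^{(\lambda)}=\Phi_{|F^{(\lambda)}|}$ up to $\mathrm{Aut}(\mathbb{P}^1(\mathbb{C}))$. Since $f$ is biholomorphic and $f_*[F^{(\lambda^1)}]=[F^{(\lambda^2)}]$, pullback identifies the linear systems, $f^*|F^{(\lambda^2)}|=|F^{(\lambda^1)}|$, so there is $\varphi\in\mathrm{Aut}(\mathbb{P}^1(\mathbb{C}))$ with $\varphi\circ\pi_k^{(\lambda^1)}=\pi_k^{(\lambda^2)}\circ f$. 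Hence $\left(S_k(\lambda^1),\pi_k^{(\lambda^1)},\mathbb{P}^1(\mathbb{C})\right)$ and $\left(S_k(\lambda^2),\pi_k^{(\lambda^2)},\mathbb{P}^1(\mathbb{C})\right)$ are isomorphic as elliptic surfaces, and Lemma \ref{LemPsi} yields $\lambda^1=\lambda^2$.

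The main obstacle is the middle step: carefully justifying that the marked equivalence, which a priori only fixes homology classes, genuinely preserves the geometric fibre class of the fibration — i.e. that $(\psi^{(\lambda)})^{-1}(\mathbf{f})$ really is the fibre class of $\pi_k^{(\lambda)}$ and not merely an abstract lattice vector. This rests on the compatibility of the markings with the $C^\infty$-trivialization of $\mathscr{F}_k$ over $\mathcal{U}$ together with $[F]$ being an honest generator of $E_k$. Once the fibre class is known to be preserved, the passage to an isomorphism of elliptic surfaces through $|F|$ is routine, and the rigidity supplied by Lemma \ref{LemPsi} closes the argument.
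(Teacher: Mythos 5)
Your proposal is correct and takes essentially the same route as the paper: the marked equivalence fixes the class of the general fibre $F\in E_k\subset\widehat{E}_k$, the fibration is recovered from the fibre class via the complete linear system $|F|$ (the paper makes your ``routine'' step explicit with Riemann--Roch, computing $\dim H^0\left(S_k(\lambda^2),\mathcal{O}_{S_k(\lambda^2)}\left(F_k^{(\lambda^2)}\right)\right)=2$ so that the pencil determines $\pi_k^{(\lambda^2)}$ up to ${\rm Aut}(\mathbb{P}^1(\mathbb{C}))$), and Lemma \ref{LemPsi} then yields $\lambda^1=\lambda^2$.
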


\begin{proof}
For each $k$,
let $F_k^{\left( \lambda^j \right)}$ $(j\in \{ 1,2 \})$ be a general fibre for the elliptic fibration $\pi_k^{\left( \lambda^j \right)}$.
Let  
$f: S_k(\lambda^1) \rightarrow S_k(\lambda^2)$
a biholomorphic mapping 
which induces an equivalence of $S$-marked $K3$ surfaces.
Then,   $f_* \left(F_k^{\left( \lambda^1 \right)}\right) = F_k^{\left( \lambda^2 \right)}$ holds.
Therefore, 
by applying Lemma \ref{LemPsi},
we can obtain the assertion
by a similar argument to the proof of \cite{NaP} Lemma 1.2.
\end{proof}

According to Lemma \ref{LemSEquiv},
we can apply  the Torelli theorem to our local period mapping $\Phi_k$ of (\ref{PhiInt}).
Then, we can prove the following theorem as in the proof of \cite{NaP} Theorem 1.1.

 \begin{thm}\label{ThmPic}
For each $k\in \{6,\ldots,18\},$
the Picard number of a  very general member of  $\mathscr{F}_k$ of (\ref{Family}) 
is equal to ${\rm rank}(E_k)=23-\ell_{k}$, where $E_k$ is the evident lattice in Proposition \ref{PropEvident}.
\end{thm}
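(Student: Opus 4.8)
The plan is to bound the Picard number $\rho$ of a generic $S_k(\lambda)$ from both sides by $23-\ell_k$. One inequality is immediate: by Proposition \ref{PropEvident} the evident lattice $E_k$ embeds in ${\rm NS}(S_k)$ and has rank $23-\ell_k$, so $\rho \geq 23-\ell_k$. All the work lies in the reverse inequality, which I would extract from the local period mapping $\Phi_k$ of (\ref{PhiInt}) together with Lemma \ref{LemSEquiv}.

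First I would fix the dimension bookkeeping. Each $E_k$ in Table 9 contains $F$ and $(O)$, and $\langle F,(O)\rangle$ is a unimodular hyperbolic plane isometric to $U$; since moreover $|\det(E_k)|=|\det(L_k)|\neq 0$, the lattice $E_k$ is nondegenerate of signature $(1,22-\ell_k)$, and the same holds for its primitive closure $\widehat{E}_k$ and for $\mathcal{E}_k\subset L_{K3}$. Hence $\mathcal{T}_k:=\mathcal{E}_k^{\perp}$ has signature $(2,\ell_k-3)$, so the period domain $\mathcal{D}_k$ attached to $\mathcal{T}_k$ has complex dimension $\ell_k-3=\dim\mathcal{U}$.

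The central step is the injectivity of $\Phi_k:\mathcal{U}\rightarrow\mathcal{D}_k$. If $\Phi_k(\lambda^1)=\Phi_k(\lambda^2)$, then the two marked transcendental Hodge structures on $\mathcal{T}_k$ coincide, so a standard application of the Torelli theorem for $K3$ surfaces (arranging the resulting Hodge isometry to fix $\mathcal{E}_k$ and to send a K\"ahler class to a K\"ahler class) produces a biholomorphism $f:S_k(\lambda^1)\rightarrow S_k(\lambda^2)$ that is an equivalence of $S$-marked $K3$ surfaces; Lemma \ref{LemSEquiv} then forces $\lambda^1=\lambda^2$. Since an injective holomorphic map between complex manifolds of equal dimension is a biholomorphism onto an open subset, $\Phi_k(\mathcal{U})$ is open in $\mathcal{D}_k$.

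Finally I would run a Noether--Lefschetz density argument. The points of $\mathcal{D}_k$ at which the period $[\omega_k]$ is orthogonal to some integral class of $\mathcal{T}_k$ --- equivalently those with $\rho>23-\ell_k$ --- form a countable union $\bigcup_{v}(v^{\perp}\cap\mathcal{D}_k)$ of proper closed analytic subsets. An open set cannot be contained in such a meagre union, so a generic point of $\Phi_k(\mathcal{U})$ has transcendental lattice exactly $\mathcal{T}_k$ and hence $\rho=23-\ell_k$; transporting this through the open embedding $\Phi_k$, and using that the minimal Picard number is attained on a dense open subset of the irreducible base $(\mathbb{C}^{\times})^{\ell_k-3}$, yields the claim for a generic member of $\mathscr{F}_k$. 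The genuine obstacle is the injectivity of $\Phi_k$: once injectivity and the equality $\dim\mathcal{D}_k=\dim\mathcal{U}$ are in hand, the openness of the image and the thinness of the jumping locus are formal, so it is the Torelli-plus-Lemma \ref{LemSEquiv} step where the argument must be made to work.
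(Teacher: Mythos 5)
Your proposal is correct and follows essentially the same route as the paper: the lower bound from $E_k \subset \mathrm{NS}(S_k)$, injectivity of $\Phi_k$ via the local Torelli theorem for $\mathcal{E}_k$-polarized $K3$ surfaces combined with Lemma \ref{LemSEquiv}, and a dimension count on $\mathcal{D}_k$. The only difference is presentational --- the paper argues by contradiction (a Picard number exceeding $23-\ell_k$ would force $\dim \Phi_k(\mathcal{U}) < \ell_k-3$, contradicting injectivity), whereas you run the equivalent direct argument via openness of the image and the countable union of Noether--Lefschetz hyperplanes, filling in details the paper leaves implicit.
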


\section{Lattice structures}

Let $L$ be a non-degenerate even lattice.
Set $L^\vee ={\rm Hom}(L,\mathbb{Z}).$
The group $\mathscr{A}_L = L^\vee /L$ becomes to be a finite abelian group.
The length $l(\mathscr{A}_L)$ of $\mathscr{A}_L$ is defined as the minimum number of generators of $\mathscr{A}_L.$
The intersection form of $L$
induces a quadratic form $q_L:\mathscr{A}_L \rightarrow \mathbb{Q}/2\mathbb{Z}.$
This $q_L$ is called the discriminant form of $L$.
We note that $q_{L_1 \oplus L_2} \simeq q_{L_1} \oplus q_{L_2}$ holds.
If the signature of $L$ is of $(s,t)$,
the triple $(s,t,q_L)$ is called the invariant of $L$.

\begin{prop} (\cite{Ni} Proposition 1.6.1)\label{PropLatticeOrthogonal}
Let $L$ be a non-degenerate even lattice.
Suppose $L$ is a primitive lattice in a unimodular lattice $\widehat{L}$.
Letting $L^\perp$ be the orthogonal complement of $L$ in $\widehat{L}$,
one has $q_{L^\perp} \simeq -q_{L}$.
 \end{prop}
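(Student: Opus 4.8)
The plan is to realize $\widehat{L}$ as an overlattice of the orthogonal direct sum $L\oplus L^\perp$ and to read the claimed anti-isometry off the resulting gluing subgroup, following the even-lattice framework of \cite{Ni}. First I would set $M=L\oplus L^\perp$. As $L$ and $\widehat{L}$ are non-degenerate, $L\cap L^\perp=0$ and $M\otimes_\mathbb{Z}\mathbb{Q}=\widehat{L}\otimes_\mathbb{Z}\mathbb{Q}$, so $M$ is a finite-index sublattice of $\widehat{L}$; moreover $L^\perp$ is itself non-degenerate and even. Since $\widehat{L}$ is unimodular, the identification $\widehat{L}=\widehat{L}^\vee$ together with $M^\vee=L^\vee\oplus (L^\perp)^\vee$ gives a chain $M\subseteq\widehat{L}=\widehat{L}^\vee\subseteq M^\vee$. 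Hence the finite group $H:=\widehat{L}/M$ embeds into $M^\vee/M=\mathscr{A}_M\simeq\mathscr{A}_L\oplus\mathscr{A}_{L^\perp}$, the last isometry being $q_{L_1\oplus L_2}\simeq q_{L_1}\oplus q_{L_2}$ recalled above.

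The heart of the argument is to prove that the two coordinate projections $p_1\colon H\to\mathscr{A}_L$ and $p_2\colon H\to\mathscr{A}_{L^\perp}$ are isomorphisms. Under the embedding above, $p_1$ sends $v\bmod M$ to the class of $\rho_L(v)$, where $\rho_L\colon\widehat{L}\to L^\vee$ is $v\mapsto\langle v,-\rangle|_L$. Injectivity is a direct computation: if $\rho_L(v)\in L$, choosing $\ell\in L$ with $\langle v,-\rangle|_L=\langle\ell,-\rangle|_L$ gives $v-\ell\in L^\perp$, so $v\in M$. Surjectivity is where primitivity is essential: because $L$ is primitive, $\widehat{L}/L$ is free, the sequence $0\to L\to\widehat{L}\to\widehat{L}/L\to 0$ splits, and dualizing shows the restriction map $\rho_L\colon\widehat{L}=\widehat{L}^\vee\to L^\vee$ is onto; composing with $L^\vee\to\mathscr{A}_L$ and noting that $M$ maps to $0$ yields surjectivity of $p_1$. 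The identical argument gives the same for $p_2$, since $L^\perp$, being an orthogonal complement, is automatically primitive in $\widehat{L}$. Therefore $H$ is the graph of an isomorphism $\gamma:=p_2\circ p_1^{-1}\colon\mathscr{A}_L\xrightarrow{\sim}\mathscr{A}_{L^\perp}$.

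Finally I would extract the discriminant-form identity from the evenness of $\widehat{L}$. For $v\in\widehat{L}$ the class $v\bmod M\in H\subseteq M^\vee/M$ has $q_M(v\bmod M)=\langle v,v\rangle\bmod 2\mathbb{Z}=0$, as $\langle v,v\rangle\in 2\mathbb{Z}$. Writing this class as $(a,\gamma(a))$ with $a\in\mathscr{A}_L$ and using $q_M=q_L\oplus q_{L^\perp}$ gives
\[
0=q_M(a,\gamma(a))=q_L(a)+q_{L^\perp}(\gamma(a)),
\]
so $q_{L^\perp}(\gamma(a))=-q_L(a)$ for all $a\in\mathscr{A}_L$. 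Thus $\gamma$ is an isometry $(\mathscr{A}_L,-q_L)\xrightarrow{\sim}(\mathscr{A}_{L^\perp},q_{L^\perp})$, that is, $q_{L^\perp}\simeq-q_L$, as claimed.

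I expect the surjectivity of $p_1$---equivalently of $\rho_L\colon\widehat{L}\to L^\vee$---to be the one genuinely non-formal step, since it is the sole place where the primitivity hypothesis enters; the remaining ingredients are bookkeeping with dual lattices and the single use of evenness of $\widehat{L}$ to force $H$ to be isotropic.
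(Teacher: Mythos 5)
Your proof is correct; the paper gives no argument for this statement, citing Nikulin's Proposition 1.6.1, and your proof is precisely the standard argument from that source: realize $\widehat{L}$ as an overlattice of $M=L\oplus L^\perp$, show the glue group $H=\widehat{L}/M\subset \mathscr{A}_L\oplus \mathscr{A}_{L^\perp}$ is the graph of an isomorphism (with primitivity of $L$ entering through the surjectivity of $\widehat{L}=\widehat{L}^\vee\to L^\vee$, and also through $(L^\perp)^\perp=L$ for the injectivity of $p_2$), and use isotropy of $H$ to get $q_{L^\perp}\circ\gamma=-q_L$. The only point worth flagging is that you implicitly assume $\widehat{L}$ is even (both for $L^\perp$ to carry a discriminant quadratic form and in the isotropy step $\langle v,v\rangle\in 2\mathbb{Z}$); this hypothesis is omitted from the paper's statement but is part of Nikulin's and holds in the intended application $\widehat{L}=L_{K3}$.
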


\begin{prop} (\cite{Ni} Corollary 1.13.1) \label{PropQI}
Let $L$ be a non-degenerate even lattice with the invariant $(s,t,q_L).$
Suppose $s>0$, $t>0$ and $l(\mathscr{A}_L)\leq {\rm rank}(L)-2$ hold.
Then, $L$ is the unique lattice with the invariant $(s,t,q_L)$ up to isometry.
\end{prop}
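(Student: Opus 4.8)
The plan is to recognize that the invariant $(s,t,q_L)$ determines the \emph{genus} of $L$, so that the assertion is equivalent to the statement that this genus contains a single isometry class; the hypotheses $s,t>0$ and $l(\mathscr{A}_L)\le{\rm rank}(L)-2$ are exactly what force the class number to be one. First I would show that two even lattices sharing the invariant $(s,t,q_L)$ are locally isometric at every place. Over $\mathbb{R}$ the local isometry class is recorded by the signature $(s,t)$. Over $\mathbb{Z}_p$ the isometry class of $L\otimes\mathbb{Z}_p$ is determined by its Jordan splitting into unimodular blocks scaled by powers of $p$: the non-unimodular blocks are read off from the $p$-component of the discriminant form $q_L$, while the rank and determinant of the residual unimodular block are then pinned down by ${\rm rank}(L)=s+t$ together with the total determinant, the latter being recovered from $|\mathscr{A}_L|$, from $q_L$, and from the congruence ${\rm sign}(q_L)\equiv s-t \pmod 8$. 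The prime $p=2$ is the delicate case, but the fact that $q_L$ is a genuine quadratic form valued in $\mathbb{Q}/2\mathbb{Z}$ (and not merely a bilinear form) supplies precisely the extra $2$-adic data needed. This local determination of the genus is standard in the arithmetic theory of even lattices and is developed in Nikulin \cite{Ni}.

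Next I would pass from the genus to the isometry class using the theory of spinor genera for indefinite forms. If $l(\mathscr{A}_L)=0$ then $L$ is unimodular, and an indefinite even unimodular lattice is determined up to isometry by its signature (it is of the form $U^{\oplus a}\oplus E_8(-1)^{\oplus b}$ or its negative), so uniqueness is immediate. If $l(\mathscr{A}_L)\ge 1$ then ${\rm rank}(L)=s+t\ge l(\mathscr{A}_L)+2\ge 3$, and here I would invoke Eichler's strong approximation theorem for the spin group: an indefinite lattice of rank at least $3$ contains exactly one isometry class in each spinor genus, so the class number of the genus equals its number of spinor genera. It then remains to prove that, under our hypotheses, the genus consists of a single spinor genus.

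The number of spinor genera is an index of the subgroup of the idele class group generated by the local spinor norms $\theta\big(O^+(L\otimes\mathbb{Z}_p)\big)$ together with the archimedean contribution, and the key point—where the inequality $l(\mathscr{A}_L)\le{\rm rank}(L)-2$ enters—is a purely local computation. The bound says that at each prime $p$ the non-unimodular part of the Jordan splitting of $L\otimes\mathbb{Z}_p$ has rank at most ${\rm rank}(L)-2$, so $L\otimes\mathbb{Z}_p$ carries a unimodular Jordan block of rank at least $2$; for odd $p$ such a block contains a hyperbolic plane over $\mathbb{Z}_p$, and for $p=2$ the even structure again suffices. In each case $\theta\big(O^+(L\otimes\mathbb{Z}_p)\big)$ exhausts $\mathbb{Z}_p^\times\,\mathbb{Q}_p^{\times 2}$, and combined with the indefiniteness at the archimedean place (which $s,t>0$ guarantees) this makes the relevant index equal to $1$. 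Hence the genus is a single spinor genus, therefore a single class, and $L$ is unique up to isometry.

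I expect the main obstacle to be the local spinor-norm computation of the last paragraph, and in particular the case $p=2$: proving that a unimodular $2$-adic block of rank at least $2$ forces the spinor norm to be surjective requires a careful analysis of the reflections generating $O^+(L\otimes\mathbb{Z}_2)$, and this is exactly the step that separates the even from the odd theory. The global strong-approximation input (Eichler's theorem) I would cite rather than reprove, as I would the dictionary between discriminant forms and $p$-adic Jordan blocks following \cite{Ni}; the genuinely load-bearing work is isolating and verifying the local lemma that the length bound guarantees a surjective spinor norm at every prime.
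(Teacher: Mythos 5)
The paper supplies no proof of this proposition at all---it is quoted directly from Nikulin \cite{Ni}, Corollary 1.13.1---and your sketch is a faithful reconstruction of the argument behind that citation: the invariant $(s,t,q_L)$ determines the genus, Eichler's strong approximation gives one class per spinor genus for indefinite lattices of rank at least $3$ (the rank-$2$ case being the even unimodular one, handled by the classification), and the bound $l(\mathscr{A}_L)\leq \mathrm{rank}(L)-2$ forces a unimodular Jordan block of rank at least $2$ in $L\otimes\mathbb{Z}_p$ at every prime, whence the local spinor norms contain $\mathbb{Z}_p^\times(\mathbb{Q}_p^\times)^2$ and the genus is a single spinor genus. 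Your flagged worry at $p=2$ resolves exactly as you suspect---since $L$ is even, the unimodular $2$-adic block is of type II, hence an orthogonal sum of the two even unimodular rank-$2$ lattices, and reflections in their unit-norm-class vectors already exhaust the unit spinor norms---so the proposal is correct and is essentially the same proof as the cited source.
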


We will apply these arithmetic  results for lattices to our N\'eron-Severi lattices of toric $K3$ hypersurfaces.

\begin{lem}\label{LemFinAbel}
For $k\in\{6,\ldots, 18\},$ let $L_k$ be the lattice of Table 1 and $E_k$ be the evident lattice of Proposition \ref{PropEvident}.
The finite abelian group $\mathscr{A}_{E_k}$ is isomorphic to the group $\mathscr{A}_{L_k}.$ 
Precisely,  setting  
\begin{align*}
& 
\mathcal{G}_6=(\mathbb{Z}/4\mathbb{Z})\oplus (\mathbb{Z}/2\mathbb{Z})^{ \oplus 2} ,\quad 
\mathcal{G}_7=\mathbb{Z}/12 \mathbb{Z}, \quad
\mathcal{G}_8 = \mathbb{Z}/10 \mathbb{Z}, \quad 
\mathcal{G}_9 =\mathbb{Z}/16 \mathbb{Z}, \quad
\mathcal{G}_{10}=\mathbb{Z}/14 \mathbb{Z},
\\
&
\mathcal{G}_{11}=\mathbb{Z}/12 \mathbb{Z}, \quad
\mathcal{G}_{12}=\mathbb{Z}/18 \mathbb{Z}, \quad
\mathcal{G}_{13}=(\mathbb{Z}/14 \mathbb{Z})\oplus (\mathbb{Z}/2 \mathbb{Z}), \quad
\mathcal{G}_{14}=\mathbb{Z}/23 \mathbb{Z},\quad
\mathcal{G}_{15}=\mathbb{Z}/31 \mathbb{Z},\\
&
\mathcal{G}_{16}=(\mathbb{Z}/10\mathbb{Z})\oplus (\mathbb{Z}/2 \mathbb{Z}),\quad
\mathcal{G}_{17}=(\mathbb{Z}/12 \mathbb{Z}) \oplus (\mathbb{Z}/2 \mathbb{Z})^{\oplus 2}, \quad
\mathcal{G}_{18}=\mathbb{Z}/44 \mathbb{Z},
\end{align*}
one has $\mathscr{A}_{E_k} \simeq \mathscr{A}_{L_k} \simeq \mathcal{G}_k$ for each $k$.
Also,  $q_{E_k} \simeq - q_{L_k}$ holds.
\end{lem}

\begin{proof}
For each $k\in\{6,\ldots, 18\},$
let $\{v_1,\ldots,v_{\ell_k-3}\}$ ($\{w_1,\ldots,w_{23-\ell_k}\}$,  resp.)  be the basis of $L_k$  ($E_k$, resp.)
of the intersection matrix of Table 1 
(Table 6, resp.),
where $\ell_k$ is the number of vertices of $P_k$.
Then, we can find an element 
$\alpha \in L_k^\vee / L_k$ 
($\beta\in E_k^\vee/E_k$, resp.) 
in the form
$\alpha = \sum_{i=1}^{\ell_k-3} r_i v_i $
($\beta = \sum_{i=1}^{23-\ell_k} s_i w_i $),
where $r_i$ ($s_i$, resp.) are elements of $\mathbb{Q}/\mathbb{Z}$,
such that $\alpha $ ($\beta$, resp.) generates a cyclic group which is a direct summand of $\mathcal{G}_k$.
One can find the coefficients $r_i$ ($s_i$, resp.) in Table A.2.1 (Table A.2.2, resp.) of Appendix.
The values $q_{L_k}(\alpha)$ and $q_{E_k}(\beta)$ for each $\alpha$ and $\beta$ are listed in Appendix.
Thus, we can check $q_{E_k} \simeq - q_{L_k}$ for every $k\in \{6,\ldots, 18\}$.
\end{proof}

\section{Mordell-Weil groups for our elliptic fibrations}

 Let  $\pi : S \rightarrow C$ be a Jacobian elliptic fibration.
We assume that  $\pi$ has singular fibres.
We let ${\rm MW}(\pi,O)$ denote the Mordell-Weil group of sections of $\pi$.
For all $P\in {\rm MW}(\pi,O)$ and $v\in C$, 
we have $(P\cdot \pi^{-1}(v))=1$.
Note that the section $P$ intersects  an irreducible component with multiplicity $1$ of every fibre $\pi^{-1}(v)$.
Set 
$
\mathcal{R} = \{ v\in \mathbb{C} \mid  \pi^{-1}(v) {\rm \enspace is \enspace a \enspace singular \enspace fibre \enspace of \enspace} \pi \}.
$
For all $v\in \mathcal{R}$, we have an expression
$
\pi^{-1}(v)=\Theta_{v,0}+ \sum_{j=1}^{m_v-1} \mu _{v,j} \Theta_{v,j},
$
where $m_v$ is the number of irreducible components of $\pi^{-1}(v)$, $\Theta_{v,j} \hspace{1mm} (j=0,\cdots,m_v -1)$ are irreducible components with multiplicity $\mu_{v,j}$ of $\pi^{-1}(v)$
and $\Theta_{v,0}$ is the component such that $\Theta_{v,0} \cap (O) \not= \phi$.
Let $F$ be a general fibre of $\pi$.
The lattice
$
T=\langle F, (O), \Theta_{v,j} \mid v\in \mathcal{R}, 1\leq j \leq m_v-1   \rangle_{\mathbb{Z}} \subset {\rm NS}(S)
$
is called  the trivial lattice for $\pi$.
Let $\widehat{T}$ be the primitive closure of $T$ in ${\rm NS}(S)$: $\widehat{T}=(T \otimes _{\mathbb{Z}}  \mathbb{Q}) \cap {\rm NS}(S)$.

\begin{prop}\label{propMW} (\cite{Shioda})
{\rm (1)} One has  ${\rm MW}(\pi,O) \simeq {\rm NS}(S)/T$  given by $P \mapsto (P) \hspace{1mm} ({\rm mod } \hspace{1mm} T)$.

{\rm (2)} 
The rank of ${\rm MW}(\pi,O)$ is equal to ${\rm rank} ({\rm NS}(S))-2- \sum_{v\in \mathcal{R}} (m_v -1).$

{\rm (3)}  
Let ${\rm MW}(\pi,O)_{tor}$ be the torsion part of ${\rm MW}(\pi,O)$. Then,
$
{\rm MW}(\pi,O)_{tor} \simeq \widehat{T}/T.
$
\end{prop}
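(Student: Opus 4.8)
The plan is to reduce everything to the generic fibre and to standard facts about elliptic curves over a field. Write $K=\mathbb{C}(C)$ and let $E_\eta$ be the generic fibre of $\pi$, a smooth curve of genus $1$ over $K$ with a rational point given by $O$; thus $E_\eta$ is an elliptic curve and, by definition, ${\rm MW}(\pi,O)=E_\eta(K)$. Since $S$ is a $K3$ surface we have ${\rm Pic}(S)={\rm NS}(S)$, so divisor classes restrict to line bundles on $E_\eta$. I would first recall the standard isomorphism $E_\eta(K)\xrightarrow{\ \sim\ }{\rm Pic}^0(E_\eta)$, $P\mapsto[(P)-(O)]$, coming from the group law on a genus-one curve with a rational point. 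Restricting prime divisors of $S$ to the generic fibre (a horizontal curve restricts to a closed point of $E_\eta$, a fibral curve restricts to $0$) yields a homomorphism $r\colon{\rm NS}(S)\to{\rm Pic}(E_\eta)$; composing with the projection $[D]\mapsto[D-(\deg D)(O)]$ onto ${\rm Pic}^0(E_\eta)$ gives a homomorphism $\psi\colon{\rm NS}(S)\to{\rm MW}(\pi,O)$.

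For part (1) I would show $\psi$ is surjective with kernel exactly $T$. Surjectivity is immediate: a section $P$ restricts to the point $P$ on $E_\eta$, so $\psi([(P)])=P$. The inclusion $T\subseteq\ker\psi$ is equally direct, since $F$ restricts to $0$, $O$ restricts to $(O)$ (degree $1$, hence $0$ in ${\rm Pic}^0$), and each non-identity component $\Theta_{v,j}$ $(j\geq 1)$ is fibral and restricts to $0$. The essential point is the reverse inclusion $\ker\psi\subseteq T$: if $\psi([D])=0$ then, writing $d=\deg r(D)$, the restriction $r(D-d\,(O))$ is principal on $E_\eta$, say the divisor of a function $f\in K(E_\eta)=\mathbb{C}(S)$; since ${\rm div}_S(f)$ restricts to $\mathrm{div}_{E_\eta}(f)$, the class $D-d\,(O)-{\rm div}_S(f)$ restricts to $0$, so it is fibral. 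I would finish by noting that the group of fibral divisor classes is generated by $F$ and the $\Theta_{v,j}$: using $\pi^{-1}(v)\sim F$ one replaces each identity component via $\Theta_{v,0}\sim F-\sum_{j\geq 1}\mu_{v,j}\Theta_{v,j}$, so every fibral class lies in $\langle F,\Theta_{v,j}\rangle_{\mathbb{Z}}\subseteq T$. As ${\rm div}_S(f)=0$ in ${\rm NS}(S)$ and $(O)\in T$, this forces $[D]\in T$.

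Parts (2) and (3) then follow formally. For (2), part (1) gives ${\rm rank}\,{\rm MW}(\pi,O)={\rm rank}\,{\rm NS}(S)-{\rm rank}\,T$, so it remains to show ${\rm rank}\,T=2+\sum_{v\in R}(m_v-1)$. I would check that $T$ is nondegenerate, whence its stated generators are independent: $\langle F,O\rangle$ is unimodular with $F^2=0$, $O^2=-2$, $F\cdot O=1$; each $\Theta_{v,j}$ $(j\geq 1)$ satisfies $F\cdot\Theta_{v,j}=O\cdot\Theta_{v,j}=0$, components of different fibres are orthogonal, and within one fibre the non-identity components span a negative-definite root lattice. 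Thus $T$ is the orthogonal direct sum of these pieces and has the claimed rank. For (3), $\hat T/T$ is by construction the torsion subgroup of ${\rm NS}(S)/T$: an element of ${\rm NS}(S)/T$ is torsion exactly when a multiple of a representative lands in $T$, i.e.\ when the representative lies in $\hat T=(T\otimes_{\mathbb{Z}}\mathbb{Q})\cap{\rm NS}(S)$. Combining with the isomorphism of (1) yields ${\rm MW}(\pi,O)_{tor}\simeq({\rm NS}(S)/T)_{tor}=\hat T/T$.

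The hard part will be the inclusion $\ker\psi\subseteq T$. The two delicate inputs there are the identification $K(E_\eta)=\mathbb{C}(S)$, which lets a principal divisor on the generic fibre be lifted to some ${\rm div}_S(f)$ that is trivial in ${\rm NS}(S)$, and the precise claim that fibral divisor classes are spanned by $F$ together with the components $\Theta_{v,j}$ occurring in $T$; the latter rests squarely on the linear equivalence $\pi^{-1}(v)\sim F$ holding for every $v\in R$.
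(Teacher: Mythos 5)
Your proposal is correct, and it takes the same route as the paper, which offers no argument of its own here but simply cites Shioda's paper on Mordell--Weil lattices: your reduction to the generic fibre --- the homomorphism ${\rm NS}(S)\to E_\eta(K)$ with kernel exactly the trivial lattice $T$, the orthogonal decomposition $T=\langle F,O\rangle\oplus\bigoplus_{v\in R}\langle \Theta_{v,j}\rangle$ for the rank count, and the identification of $\hat{T}/T$ with the torsion of ${\rm NS}(S)/T$ --- is precisely the standard argument of that reference. The only implicit ingredients worth making explicit are that ${\rm Pic}(S)={\rm NS}(S)$ is torsion-free for a $K3$ surface (needed both to represent N\'eron--Severi classes by divisors and in your step (3)) and that all fibres are linearly equivalent over $C\simeq\mathbb{P}^1(\mathbb{C})$, both of which you invoke correctly.
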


For $v\in \mathcal{R}$, we set
$
(\pi^{-1} (v))^\sharp = \bigcup_{0\leq j \leq m_v -1, \mu_{v,j} =1} \Theta_{v,j}^\sharp,
$
where
$
\Theta_{v,j}^\sharp = \Theta_{v,j} -\{\text{singular points of } \pi^{-1} (v)\}.
$
Then, the set $(\pi^{-1} (v))^\sharp$ admits a group structure (\cite{Kod}, \cite{Ne}).
Precisely,
\begin{align}\label{NeronHom}
(\pi^{-1} (v))^\sharp \simeq
 \mathbb{C}^\times \times (\mathbb{Z}/b \mathbb{Z}) \quad (\text{if $\pi^{-1}(v)$ is of type $I_b$}).
 \end{align}
Letting $v\in \mathcal{R}$, we have the mapping $sp_v: {\rm MW}(\pi,O) \rightarrow (\pi^{-1} (v))^\sharp$ defined by $P \mapsto P \cap \pi^{-1}(v).$
This mapping, called the specialization mapping, gives a homomorphism of groups.
Setting ${\rm MW}(\pi,O)_0 =\{P\in {\rm MW}(\pi,O) \mid (P) \cap \Theta_{v,0} \not=\phi \text{ for all } v\in \mathcal{R}\}$, we have ${\rm MW}(\pi,O)_0 \subset {\rm MW}(\pi,O)/ {\rm MW}(\pi,O)_{tor}.$

We will apply the above result to our elliptic $K3$ surfaces.

\subsection{$E_k$ as N\'eron-Severi lattice}

For each $k\in \{6, \ldots, 18\},$
let $S_k$ be a very general member of the family $\mathscr{F}_k$ of (\ref{Family})
and $\pi_k$ be the elliptic fibration given in Proposition \ref{PropEquationJac}.
Let $T_k$ be the trivial lattice for $\pi_k$,
which is coming from the  fibres  and the sections  illustrated in Table 5.

 \begin{thm}\label{ThmNSEvident}
The N\'eron-Severi lattice ${\rm NS}(S_k)$ is isometric to the evident lattice $E_k$ of Proposition \ref{PropEvident}.
\end{thm}

We will prove the theorem for each $k$.
Note  that  ${\rm rank}(E_k) = {\rm rank}({\rm NS}(S_k))$ is proved in Theorem \ref{ThmPic}.

\subsubsection{Cases of $k=10,14,15$}

If $|{\rm det} (E_k)|$ is square-free,
then $E_k$ is equal to ${\rm NS}(S_k)$.
Therefore, from Table 6, Theorem \ref{ThmNSEvident} is true if $k=10,14,15$.

\subsubsection{Cases of $k=7,8,11,18$}

Suppose $\frac{|{\det} (E_k)|}{2^2}$ is an odd square-free number.
Then, $\left[{\rm NS}(S_k): E_k \right] =1 \text{ or } 2$.
If $\left[{\rm NS}(S_k): E_k \right] = 2$, then ${\rm NS}(S_k)$ is an even lattice whose discriminant is odd.
However, this is a contradiction if the rank of ${\rm NS}(S_k)$ is odd.
Thus, Theorem \ref{ThmNSEvident} holds for $k=7,8,11,18$.

\subsubsection{Case of $k= 16$}

As in Section 5.1.2, we have $\left[{\rm NS}(S_{16}): E_{16} \right] =1 \text{ or } 2$.

\begin{lem}\label{LemCase1.1}
One has $\left[\widehat{T}_{16} : T_{16} \right]=1$.
\end{lem}

\begin{proof}
Assume $\left[\widehat{T}_{16} : T_{16} \right]= 2$. 
Then, due to Proposition \ref{propMW} (3), there exists $R\in {\rm MW}(\pi_{16}, O)_{tor}$ such that $2 R =O$.
By virtue of the structure of (\ref{NeronHom}) and the property of the specialization mapping,
we can observe how the section $(R)$ intersects the divisors $a_p, b_q$ illustrated in Table 5.
Let $T_{16}' $ be the group generated by $(R)$ and  $T_{16}$.
We can see that ${\rm det} (T_{16}' )$  never become zero (for precise calculations, see Appendix Section A.3).
This is a contradiction, because  ${\rm rank} (T_{16} ') = {\rm rank} (T_{16})$.
Hence, we have the assertion.
\end{proof}

\begin{lem}\label{LemCase1.2}
One has   $\left[{\rm NS}(S_{16}): E_{16} \right] =1$.
\end{lem}

\begin{proof}
According to Lemma \ref{LemCase1.1}, 
${\rm NS}(S_{16}) $
is equal to the primitive closure of the group generated by
$T_{16}$ and $(Q)$ in ${\rm NS}(S_{16}) $.
Suppose there exists $Q'\in {\rm MW}(\pi_{16}, O)$ with $2 Q' =Q$.
Let $\widetilde{E}_{16}$ be the group generated by $E_{16}$ and $(Q').$
We can observe how $Q'$ intersects $T_{16}$.
Then, we have ${\rm det}\left( \widetilde{E}_{16} \right)\not=0 $  (for detail, see Appendix Section A.3).
This is a contradiction, because ${\rm rank}(E_{16}) = {\rm rank} \left(\widetilde{E}_{16} \right)$.
Thus, $\left[ {\rm NS}(S_{16}) : E_{16} \right]=1.$
\end{proof}

\subsubsection{Cases of $k= 9,  13, 17$}

If  $k\in \{9,13,17\},$ there is the $2$-torsion element $O' \in {\rm MW}(\pi_k, O)_{tor}$ in Table 4.
Set $E_k' = T_k + \langle (Q) \rangle_\mathbb{Z} $.
It holds ${\rm rank} (E_k') = {\rm rank} ({\rm NS}(S_k))$.
By calculating $|{\rm det} (E_k')|,$ we can see that $\left[{\rm NS}(S_k): E_k' \right] =2 \text{ or } 4 \text{ or } 8$.
By an argument similar to the proof of  Lemma \ref{LemCase1.1}, we can obtain the following lemma  
(see the precise data of Appendix Section A.3).

\begin{lem}\label{LemCase2.1}
If $k\in \{9, 13, 17\}$, $\left[\widehat{T}_k : T_k\right]=2$ holds.
\end{lem}

Now, let $\overline{T}_k$ be the lattice generated by the generators of $E_k$ of Table 6 with the exception of $(Q)$.
Remark that $(O')\in \overline{T}_k $.
Since $(O') \in  \widehat{T}_k$, $\overline{T}_k $ is a sublattice of $\widehat{T}_k$.
By calculating ${\rm det}\left( \overline{T}_k \right)$ and ${\rm det} \left( \widehat{T}_k \right)$, together with Lemma \ref{LemCase2.1},  we obtain the following lemma.

\begin{lem}\label{LemCase2.2}
If $k\in \{9, 13, 17\}$, $\left[\widehat{T}_k : \overline{T}_k\right]=1$ holds.
\end{lem}
 
By an argument like Section 5.1.2, we have $[{\rm NS}(S_k):E_k]=1 \text{ or } 2$.
According to (\ref{NeronHom}), there are no sections $Q'$ with $2Q'=Q$.
Assume there exists a section $Q'$ such that $2Q' = Q+O'$.
Then, there exist $Q'+O'$ and $-Q'$ also.
For $Q_0' \in \{Q', Q'+O', -Q' \}$, let us consider the group $\check{E}_k = \overline{T}_k +\langle (Q'_0) \rangle_\mathbb{Z}$, which should give an overlattice of $E_k$.
However, by calculating ${\rm det}\left( \check{E}_k \right)$ for $Q_0' \in \{Q', Q'+O', -Q' \}$, 
we can see  it is impossible that   $\overline{T}_k +\langle (Q')\rangle_\mathbb{Z} $  becomes  a proper overlattice of $E_k$,
because ${\rm det}\left( \check{E}_k \right)$  take inappropriate values.
For detailed calculations, see Appendix Section A.3.
Therefore, $[{\rm NS}(S_k):E_k] \not= 2$. 
Thus, we have the following lemma.
 
\begin{lem}\label{LemCase2.3}
If $k\in \{9,13,17\}$,   $\left[{\rm NS}(S_k): E_k \right] =1$ holds.
\end{lem}

\subsubsection{Case of $k=6$}

Since there exists the 2-torsion element $O'\in {\rm MW}(\pi_6,O)_{tor}$ in Table 4,  together with $|{\rm det}(T_{6})| = 2^6$, $\left[\widehat{T}_6 : T_6 \right]$  must be $2 \text{ or } 4 \text{ or } 8.$
We can prove $\left[\widehat{T}_6:T_6\right]=2$ as in the proof of Lemma \ref{LemCase1.1}.
Taking  the sublattice $\overline{T}_6 $ of $\widehat{T}_6$ such that $E_6=\widehat{T}_6 +\langle (Q) \rangle_\mathbb{Z}$ like the above $\widehat{T}_{k}$ $(k\in \{9,13,17\})$,
 we can see $\left[\widehat{T}_6 : \overline{T}_6\right] =1  $ as in Lemma \ref{LemCase2.2}.
As in Section 5.1.2, we have $[{\rm NS}(S_6):E_6]=1 \text{ or } 2$.
Suppose there exists a section $Q'$ with $2 Q' = Q.$
Setting $\widetilde{E}_6 = E_6 + \langle (Q') \rangle_\mathbb{Z}$, 
as in the proof of Lemma \ref{LemCase1.2}, we can see that ${\rm det}\left(\widetilde{E}_6\right)$ never become zero.
See the precise data of Appendix Section A.3.
So, there does not exist $Q'$ with $2Q'=Q$.
Since the configuration  for $k=6$ in Table 5 is symmetric,  it guarantees the nonexistence of $Q'$ with $2Q' =Q +O'$.
Thus, we have $[{\rm NS}(S_6): E_6]=1$.

\subsubsection{Case of $k=12$}

The proof for the case of $k=12$ is rather simple.
We have $\widehat{T}_{12} = {\rm NS}(S_{12})$, because ${\rm rank}(T_{12})={\rm rank}({\rm NS}(S_{12}))$.
Since $|{\rm det}(T_{12})| = 2 \cdot 3^4$, $\left[{\rm NS}(S_{12}): T_{12} \right]= 1 \text{ or } 3 \text{ or } 9.$ 
The section $Q$ satisfies $3Q=O$.
We can observe that 
${\rm MW}(\pi_{12},O)_{tor}$ is just generated by $Q$
 on the basis of (\ref{NeronHom}) and the divisors illustrated in Table 5.
Hence, we have $[{\rm NS}(S_{12}): T_{12}]=3$. 
Therefore, we have ${|{\rm det}(E_{12})|}=2 \cdot 3^2= |{\rm det}({\rm NS}(S_{12}))| $ and  $E_{12} = {\rm NS}(S_{12})$.

\subsection{Consequences of Theorem \ref{ThmNSEvident}}

 By Theorem \ref{ThmNSEvident}, together with Proposition \ref{PropLatticeOrthogonal}, Proposition \ref{PropQI} and Lemma \ref{LemFinAbel}, we have the following Corollary.

 \begin{cor}\label{CorTr}
For each $k\in \{6,\ldots,18\},$
let $S_k$ be a very general member of the family $\mathscr{F}_k$ of (\ref{Family})
and $L_k$ be the lattice in Table 1.
Then, the transcendental lattice ${\rm Tr}(S_k)$ is isometric to $U\oplus L_k$.
\end{cor}

Hence, according to (\ref{K3mirrorF}),
the Dolgachev conjecture is established for every three-dimensional Fano polytope.

 \begin{rem}\label{RemHes}
 The lattice structure of  $S_6$ is determined in \cite{DG} Proposition 5.4.
 In \cite{DG}, they study $K3$ surfaces, 
 which are coming from the Hessian of a general cubic surfaces which does not admit a Sylvester form.
 Such Hessian $K3$ surface is birationally equivalent to our $S_6$
 (see  \cite{KHes} also).
 \end{rem}

From the argument  of Section 5.1, 
together with Proposition \ref{propMW}, 
we obtain the structure of the Mordell-Weil group ${\rm MW}(\pi_k, O)$ for each $k$.

\begin{thm}\label{ThmMW}
Let $S_k$ be a very general member of the family $\mathscr{F}_k$ of (\ref{Family})
and $\pi_k$ be the elliptic fibration given in Proposition \ref{PropEquationJac}.

(1) If $k\in \{7,8,10,11,14,15,16,18\}$,
the rank of ${\rm MW}(\pi_k, O)$ is one. 
The free part of ${\rm MW}(\pi_k, O)$ is generated by $Q$ in Table 4. 
Also, ${\rm MW}(\pi_k, O)_{tor}=\{O\}.$

(2) If $k\in \{6,9,13,17\}$,
the rank of ${\rm MW}(\pi_k, O)$ is one. 
The free part of ${\rm MW}(\pi_k, O)$ is generated by $Q$ in Table 4. 
Also, $O'$ in Table 4 generates ${\rm MW}(\pi_k, O)_{tor} \simeq \mathbb{Z}/2\mathbb{Z}$. 

(3) If $k=12$,
the rank of ${\rm MW}(\pi_k, O)$ is zero. 
Also, $Q$ in Table 4 generates ${\rm MW}(\pi_k, O)_{tor}\simeq \mathbb{Z}/3\mathbb{Z}$. 
\end{thm}

 \section*{Appendix : Supplementary data}

 \subsection*{A.1 Tables  for  proof of Proposition 2.1}

By performing the birational transformation $(x,y,z)\mapsto (x_1,y_1,z_1)$ given by
$$
x=x(x_1,y_1,z_1), \quad y=y(x_1,y_1,z_1), \quad z=z(x_1,y_1,z_1),
$$
as in Table A.1.1, A.1.2 and A.1.3 to the equations of hypersurfaces in Table 2 
for each $k \in \{6,\ldots,18\}$,
we obtain elliptic $K3$ surfaces of Proposition 2.1.

{\small
\begin{longtable}{llll}
\caption*{Table A.1.1: Rational functions  $ x(x_1,y_1,z_1)$ }
 \vspace{-5.5mm}\\
 \\
\hline
  $k$ &  $x(x_1,y_1,z_1) $     
    \\
  \hline
  \endhead
\vspace{1mm}$6$ &
 $\frac{2 y_1 (-\lambda_3 x_1^2 + y_1)}{x_1 (\lambda_2 y_1 + x_1 y_1 + x_1^2 y_1 + z_1)}$ 
  \\
\vspace{1mm}$7$ &
 $\frac{2 y_1^2}{x_1 (-\lambda_3 x_1^3 + x_1 y_1 + x_1^2 y_1 - z_1)}$  \\
\vspace{1mm}$8$ &
 $\frac{2 y_1^2}{x_1 (-\lambda_2 \lambda_3 x_1^2 - \lambda_3 x_1^3 + x_1 y_1 + x_1^2 y_1 + z_1)}$  \\
\vspace{1mm}$9$ &
 $\frac{2 y_1 (-\lambda_3 x_1 + y_1)}{x_1 (\lambda_2 y_1 + x_1 y_1 + x_1^2 y_1 - z_1)}$ \\
\vspace{1mm}$10$ &
 $\frac{2 y_1^2}{x_1 (-\lambda_3 x_1^2  + \lambda_1 y_1 + x_1 y_1 + x_1^2 y_1+z_1)}$ 
  \\
\vspace{1mm}$11$ &
 $\frac{2 y_1^2}{x_1 (-\lambda_3 x_1^2 + x_1 y_1 + x_1^2 y_1 + z_1)}$ 
 \\
\vspace{1mm}$12$ &
 $\frac{2 y_1^2}{x_1 (-\lambda_2 \lambda_3 x_1^2 - \lambda_3 x_1^3 + \lambda_1 y_1 + x_1 y_1 + x_1^2 y_1 + z_1)}$ 
 \\
\vspace{1mm}$13$ & $\frac{2 y_1 (-\lambda_4 x_1 - \lambda_3 x_1^2 + y_1)}{x_1 (\lambda_2 y_1 + x_1 y_1 + x_1^2 y_1 - z_1)}$
\\
\vspace{1mm}$14$ & $\frac{2 (\lambda_4 x_1 + \lambda_3 x_1^2 - y_1) y_1}{x_1 (\lambda_1 x_1^3 - \lambda_2 y_1 - x_1 y_1 - x_1^2 y_1 -
    z_1)}$ 
  \\
\vspace{1mm}$15$ &
 $\frac{(\lambda_4 + \lambda_3 x_1)( -\lambda_1 \lambda_4 x_1^2 - \lambda_1 \lambda_3 x_1^3 + \lambda_2 y_1 + x_1 y_1 + x_1^2 y_1 + z_1)}{2 y_1 (-\lambda_4 x_1 - \lambda_3 x_1^2 + y_1)}$ 
\\
 \vspace{1mm}$16$ &
  $\frac{2 y_1 (-\lambda_4 x_1 - \lambda_3 x_1^2 + y_1)}{x_1 (-\lambda_1 x_1^2 + \lambda_2 y_1 + x_1 y_1 + 
   x_1^2 y_1 + z_1)}$ 
   \\
  \vspace{1mm}$17$ &
   $\frac{2 y_1 (-\lambda_2 \lambda_4 x_1 - \lambda_2 x_1^2 - \lambda_4 \lambda_5 x_1^2 - \lambda_5 x_1^3 + y_1)}{(\lambda_4 + 
   x_1) (\lambda_3 y_1 + x_1 y_1 + x_1^2 y_1 + z_1)}$ 
   \\
 \vspace{1mm}   $18$ &  $x_1$ 
 \\
 \hline\\
\end{longtable} 
}
\vspace{-3mm}
{\small
\begin{longtable}{llll}
\caption*{Table A.1.2: Rational functions  $ y(x_1,y_1,z_1)$ }
\vspace{-5.5mm}\\
\\
\hline
  $k$ &  $y(x_1,y_1,z_1)$     
    \\
  \hline
  \endhead
\vspace{1mm}$6$ 
& $x_1$ 
  \\
\vspace{1mm}$7$ 
& $x_1$ 
 \\
\vspace{1mm}$8$ &
  $x_1$ 
  \\
\vspace{1mm}$9$ &
  $x_1$ 
  \\
\vspace{1mm}$10$ &
  $-\frac{-\lambda_3 x_1^2  + \lambda_1 y_1 + x_1 y_1 + x_1^2 y_1+z_1}{2 x_1 y_1}$
   \\
\vspace{1mm}$11$ &
  $x_1$ 
  \\
\vspace{1mm}$12$ & 
 $\frac{\lambda_2 \lambda_3 x_1^2 + \lambda_3 x_1^3 - \lambda_1 y_1 - x_1 y_1 - x_1^2 y_1 - z_1}{2 (\lambda_2 + x_1) y_1}$ 
 \\
\vspace{1mm}$13$ & 
 $x_1$
 \\
\vspace{1mm}$14$ &
      $x_1$ 
      \\
\vspace{1mm}$15$ &
  $x_1$
   \\
 \vspace{1mm}$16$ & 
    $x_1$
    \\
  \vspace{1mm}$17$ & 
   $x_1$ 
  \\
 \vspace{1mm}   $18$ & 
  $\frac{\lambda_1 \lambda_5 x_1^3 + \lambda_1 x_1^4 - \lambda_2 y_1 - x_1 y_1 - x_1^2 y_1 + z_1}{2 (\lambda_5 + x_1) y_1}$ 
  \\
 \hline\\
\end{longtable} 
}
\vspace{-3mm}
{\small
\begin{longtable}{llll}
\caption*{Table A.1.3: Rational functions  $ z(x_1,y_1,z_1)$ }
\vspace{-5.5mm}\\
\\
\hline
  $k$ &  $z(x_1,y_1,z_1)$    
    \\
  \hline
  \endhead
\vspace{1mm}$6$ &
  $- \frac{\lambda_2 y_1 + x_1 y_1 + x_1^2 y_1 + z_1}{2 x_1 (-\lambda_3 x_1^2 + y_1)}$  \\
\vspace{1mm}$7$ &
 $-\frac{-\lambda_3 x_1^3 + x_1 y_1 + x_1^2 y_1 - z_1}{2 x_1 y_1}$ \\
\vspace{1mm}$8$ &
  $\frac{\lambda_2 \lambda_3 x_1^2 + \lambda_3 x_1^3 - x_1 y_1 - x_1^2 y_1 - z_1}{2 (\lambda_2 + x_1) y_1}$ \\
\vspace{1mm}$9$ &
  $\frac{\lambda_2 y_1 + x_1 y_1 + x_1^2 y_1 - z_1}{2 x_1 (\lambda_3 x_1 - y_1)}$ \\
\vspace{1mm}$10$ & 
 $x_1$ \\
\vspace{1mm}$11$ &  
$-\frac{-\lambda_3 x_1^2 + x_1 y_1 + x_1^2 y_1 + z_1}{2 x_1 y_1}$\\
\vspace{1mm}$12$ & 
$x_1$\\
\vspace{1mm}$13$ & 
 $\frac{\lambda_2 y_1 + x_1 y_1 + x_1^2 y_1 - z_1}{2 x_1 (\lambda_4 x_1 + \lambda_3 x_1^2 - y_1)}$ \\
\vspace{1mm}$14$ 
    & $-\frac{\lambda_1 x_1^3 - \lambda_2 y_1 - x_1 y_1 - x_1^2 y_1 - z_1}{
 2 x_1 (\lambda_4 x_1 + \lambda_3 x_1^2 - y_1)}$\\
\vspace{1mm}$15$ 
&  
  $-\frac{-\lambda_1 \lambda_4 x_1^2 - \lambda_1 \lambda_3 x_1^3 + \lambda_2 y_1 + x_1 y_1 + x_1^2 y_1 + z_1}{
 2 x_1 (-\lambda_4 x_1 - \lambda_3 x_1^2 + y_1)}$\\
 \vspace{1mm}$16$ & 
   $\frac{-\lambda_1 x_1^2 + \lambda_2 y_1 + x_1 y_1 + 
 x_1^2 y_1 + z_1}{2 x_1 (\lambda_4 x_1 + \lambda_3 x_1^2 - y_1)}$\\
  \vspace{1mm}$17$ &
      $\frac{\lambda_3 y_1 + x_1 y_1 + 
 x_1^2 y_1 + z_1}{2 x_1 (\lambda_2 \lambda_4 x_1 + \lambda_2 x_1^2 + \lambda_4 \lambda_5 x_1^2 + \lambda_5 x_1^3 - y_1)}$\\
 \vspace{1mm}   $18$ & 
   $-\frac{\lambda_1 x_1^2 (\lambda_5 + x_1)}{y_1}$\\
 \hline\\
\end{longtable} 
}

 \subsection*{A.2 Tables  for  proof of Lemma 4.1}

In the proof of Lemma 4.1 of [MN],
 $\alpha$ and $\beta$ are necessary.
 Here, we give explicit forms of them.

For each $k\in \{6,\ldots,18\}$,
$$\alpha = \sum_{k=1}^{\ell_k-3} s_k v_k $$
is defined by the coefficients in Table A.2.1.

\begin{longtable}{llll}
\caption*{Table A.2.1: Coefficients  $r_1,\ldots,r_{\ell_k-3}$ and discriminants  $q_{L_k}(\alpha) $}
\vspace{-5.5mm}\\
\\
\hline
  $k$ & Cyclic groups &  Coefficients  $r_1,\ldots, r_{\ell_k-3}$  & $q_{L_k} (\alpha)$  \vspace{1mm}
    \\
  \hline   
  \endhead
\vspace{1mm}$6$ &$ \mathbb{Z}/4\mathbb{Z}$ & $\frac{3}{4},\frac{1}{4},\frac{1}{4}$   & $\frac{7}{4}$ \vspace{1mm}\\
\vspace{1mm} &$\mathbb{Z}/2\mathbb{Z}$ &  $0, \frac{1}{2}, 0$   & $0$ \vspace{1mm}\\
\vspace{1mm} &$\mathbb{Z}/2\mathbb{Z}$ &  $\frac{1}{2}, \frac{1}{2}, 0 $   & $1$ \vspace{1mm}\\
\vspace{1mm}$7$ &$ \mathbb{Z}/12\mathbb{Z}$ & $\frac{1}{6}, \frac{11}{12}, \frac{5}{12}$  & $\frac{23}{12}$ \vspace{1mm}\\
\vspace{1mm}$8$ &  $ \mathbb{Z}/20\mathbb{Z}$ & $\frac{1}{10},\frac{7}{20},\frac{19}{20}$  & $\frac{39}{20}$ \vspace{1mm}\\
\vspace{1mm}$9$ & $ \mathbb{Z}/16\mathbb{Z}$ & $\frac{3}{8},\frac{1}{8},\frac{15}{16}$ &  $\frac{31}{16}$ \vspace{1mm}\\
\vspace{1mm}$10$ & $ \mathbb{Z}/14\mathbb{Z}$ &  $\frac{2}{7},\frac{5}{14},\frac{3}{14}$ & $ \frac{3}{14}$ \vspace{1mm}\\
\vspace{1mm}$11$ & $ \mathbb{Z}/12\mathbb{Z}$ & $\frac{7}{12},\frac{11}{12},\frac{1}{4} $ & $\frac{19}{12}$ \vspace{1mm}\\
\vspace{1mm}$12$ & $ \mathbb{Z}/18\mathbb{Z}$ &  $\frac{13}{18},\frac{8}{9},\frac{1}{3} $ & $\frac{31}{18}$ \vspace{1mm}\\
\vspace{1mm}$13$ & $ \mathbb{Z}/14\mathbb{Z}$ & $\frac{2}{7},\frac{5}{7},\frac{3}{14},\frac{1}{14}$  & $\frac{12}{7}$ \vspace{1mm}\\
\vspace{1mm}  & $ \mathbb{Z}/2\mathbb{Z}$ &  $0,\frac{1}{2},0,\frac{1}{2}$  & $0$ \vspace{1mm}\\
\vspace{1mm}$14$ &$ \mathbb{Z}/23\mathbb{Z}$ &  $\frac{9}{23},\frac{17}{23},\frac{5}{23},\frac{1}{23}$ & $\frac{40}{23}$  \vspace{1mm}\\
\vspace{1mm}$15$ & $ \mathbb{Z}/31\mathbb{Z}$ & $  \frac{13}{31},\frac{5}{31},\frac{12}{31},\frac{14}{31} $ & $\frac{44}{31}$ \vspace{1mm}\\
 \vspace{1mm}$16$ &  $ \mathbb{Z}/10\mathbb{Z}$ & $\frac{3}{10},\frac{1}{5},\frac{7}{10},\frac{1}{10} $  & $\frac{17}{10}$ \vspace{1mm}\\
 \vspace{1mm} & $ \mathbb{Z}/2\mathbb{Z}$ &  $\frac{1}{2},\frac{1}{2},0,\frac{1}{2} $ & $\frac{1}{2}$\\
  \vspace{1mm}$17$ & $ \mathbb{Z}/12\mathbb{Z}$ & $\frac{1}{6},\frac{5}{6},\frac{1}{3},\frac{5}{12},\frac{5}{12} $  & $\frac{17}{12}$ \vspace{1mm}\\
   \vspace{1mm} & $ \mathbb{Z}/2\mathbb{Z}$ & $0,0,\frac{1}{2},0,\frac{1}{2} $  & $0$ \vspace{1mm}\\
 \vspace{1mm}  &$ \mathbb{Z}/2\mathbb{Z}$ & $0,\frac{1}{2},0,0,\frac{1}{2}$  & $1$ \vspace{1mm}\\
  $18$ &$ \mathbb{Z}/44\mathbb{Z}$ & $\frac{13}{44},\frac{5}{44},\frac{27}{44},\frac{31}{44},\frac{25}{44}$ & $\frac{57}{44}$ \vspace{1mm} \\
\hline\\
\end{longtable}

Also,
$$\beta = \sum_{k=1}^{\ell_k-3} r_k v_k $$
is defined by the coefficients in Table A.2.2.

\begin{longtable}{llll}
\caption*{Table A.2.2: Coefficients  $s_1,\ldots, s_{23-\ell_k}$ and discriminants $q_{E_k}(\beta)$}
\vspace{-5.5mm}\\
\\
\hline
  $k$ &Cyclic groups & Coefficients  $s_1,\ldots, s_{23-\ell_k}$  & $q_{E_k} (\beta)$    \vspace{1mm}
    \\
  \hline
  \endhead
\vspace{1mm}$6$  &$ \mathbb{Z}/4\mathbb{Z}$ & $\frac{1}{2},\frac{1}{4},\frac{1}{2},\frac{1}{4},0,\frac{1}{2},0,\frac{1}{4},\frac{1}{2},\frac{3}{4},\frac{3}{4},\frac{1}{2},\frac{3}{4},0,0,0,0 $   & $\frac{1}{4}$ \vspace{1mm}\\
\vspace{1mm}  &$ \mathbb{Z}/2\mathbb{Z}$&  $0,0,0,0,0,0,0,0,0,0,\frac{1}{2},0,\frac{1}{2},0,\frac{1}{2},0,\frac{1}{2} $   & $0$ \vspace{1mm}\\
\vspace{1mm} &$ \mathbb{Z}/2\mathbb{Z}$ & $0,\frac{1}{2},\frac{1}{2},0,0,\frac{1}{2},0,\frac{1}{2},0,\frac{1}{2},\frac{1}{2},0,0,0,0,0,0  $   & $1$ \vspace{1mm}\\
\vspace{1mm}$7$  &$ \mathbb{Z}/12\mathbb{Z}$ & $\frac{2}{3}, \frac{1}{3}, \frac{2}{3}, \frac{5}{6}, \frac{2}{3}, \frac{1}{2}, \frac{1}{3}, \frac{1}{6}, \frac{1}{12}, \frac{11}{12}, \frac{11}{12}, \frac{5}{6}, \frac{3}{4}, 0, \frac{1}{4}, \frac{1}{2}, \frac{3}{4}$ & $\frac{1}{12}$  \vspace{1mm}\\
\vspace{1mm}$8$ &$ \mathbb{Z}/20\mathbb{Z}$ & $\frac {1} {5}, \frac {3} {5}, \frac {2} {5}, \frac {7} {10}, \frac {2} {5}, \frac {1} {10}, \frac {1} {20}, \frac {3} {4}, \frac {3}{5}, \frac {4} {5}, \frac {3} {4}, \frac {1} {2}, \frac {1} {4},\frac {3} {5}, \frac {19} {20}, \frac {3} {10}, \frac {13} {20}$&  $\frac{1}{20}$ \vspace{1mm}\\
\vspace{1mm}$9$ &$ \mathbb{Z}/16\mathbb{Z}$ & $\frac{3}{4},\frac{3}{8},\frac{9}{16},\frac{1}{16},\frac{1}{2},\frac{7}{16},\frac{3}{8},\frac{1}{4},\frac{1}{8},\frac{5}{8},\frac{1}{4},\frac{7}{8},\frac{15}{16},0,0,0,0$ & $\frac{1}{16}$ \vspace{1mm}\\
\vspace{1mm}$10$  &$ \mathbb{Z}/14\mathbb{Z}$ & $ \frac{5}{7},\frac{5}{14},\frac{9}{14},\frac{11}{14},\frac{4}{7},\frac{5}{7},\frac{6}{7},\frac{1}{2},0,\frac{1}{2},0,\frac{6}{7},\frac{5}{7},\frac{4}{7},\frac{3}{7},\frac{2}{7},\frac{1}{7} $ & $\frac{25}{14} $  \vspace{1mm}\\
\vspace{1mm}$11$ &$ \mathbb{Z}/12\mathbb{Z}$ & $ \frac{1}{2},\frac{3}{4},\frac{1}{4},\frac{1}{4},\frac{1}{2},\frac{5}{12},\frac{1}{3},\frac{1}{3},\frac{1}{6},\frac{11}{12},\frac{5}{6},\frac{3}{4},\frac{2}{3},\frac{1}{3},0,\frac{2}{3},\frac{1}{3} $ & $\frac{5}{12}$ \vspace{1mm}\\
\vspace{1mm}$12$ &$ \mathbb{Z}/18\mathbb{Z}$& $\frac{1}{3},\frac{2}{3},\frac{1}{3},\frac{1}{18},\frac{1}{9},\frac{5}{6},\frac{5}{9},\frac{5}{18},\frac{2}{9},\frac{1}{9},\frac{1}{3},\frac{2}{3},0,0,0,0,0 $ & $ \frac{5}{18}$ \vspace{1mm}\\
\vspace{1mm}$13$ &$ \mathbb{Z}/14\mathbb{Z}$& $\frac{3}{7},\frac{5}{7},\frac{13}{14},\frac{5}{14},\frac{2}{7},\frac{9}{14},0,0,\frac{1}{7},0,0,\frac{1}{14},\frac{1}{7},\frac{6}{7},\frac{4}{7},\frac{2}{7}$   & $\frac{2}{7}$   \vspace{1mm}\\
\vspace{1mm} &$ \mathbb{Z}/2\mathbb{Z}$ & $0,0,0,0,0,0,0,0,0,\frac{1}{2},0,\frac{1}{2},0,\frac{1}{2},0,\frac{1}{2} $  & $0$ \vspace{1mm}\\
\vspace{1mm}$14$ &$ \mathbb{Z}/23\mathbb{Z}$ & $\frac{6}{23},\frac{3}{23},\frac{20}{23},\frac{18}{23},\frac{13}{23},\frac{8}{23},\frac{6}{23},\frac{4}{23},\frac{2}{23},\frac{1}{23},\frac{2}{23},\frac{3}{23},\frac{7}{23},\frac{11}{23},\frac{15}{23},\frac{19}{23} $ & $\frac{6}{23}$ \vspace{1mm}\\
\vspace{1mm}$15$ &$ \mathbb{Z}/31\mathbb{Z}$ & $ \frac{28}{31},\frac{14}{31},\frac{17}{31},\frac{4}{31},\frac{8}{31},\frac{26}{31},\frac{13}{31},\frac{1}{31},\frac{16}{31},\frac{30}{31},\frac{29}{31},\frac{28}{31},\frac{10}{31},\frac{23}{31},\frac{5}{31},\frac{18}{31}$ & $\frac{18}{31}$ \vspace{1mm}\\
 \vspace{1mm}$16$ &$ \mathbb{Z}/10\mathbb{Z}$& $0,\frac{1}{2},\frac{1}{2},\frac{3}{10},\frac{3}{5},\frac{2}{5},\frac{1}{5},\frac{1}{10},\frac{1}{5},\frac{3}{10},\frac{2}{5},0,\frac{3}{5},\frac{1}{5},\frac{4}{5},\frac{2}{5}$  & $\frac{3}{10}$ \vspace{1mm}\\
 \vspace{1mm}  &$ \mathbb{Z}/2\mathbb{Z}$ & $0,0,0,0,0,0,0,\frac{1}{2},0,\frac{1}{2},0,\frac{1}{2},0,\frac{1}{2},0,\frac{1}{2} $ & $\frac{3}{2}$ \\
  \vspace{1mm}$17$ &$ \mathbb{Z}/12\mathbb{Z}$ & $\frac{1}{3},\frac{1}{6},\frac{1}{4},\frac{7}{12},\frac{5}{6},\frac{5}{12},0,0,\frac{5}{12},\frac{5}{12},\frac{1}{2},\frac{3}{4},0,\frac{2}{3},\frac{1}{3}$  & $\frac{7}{12}$ \vspace{1mm}\\
   \vspace{1mm}  &$ \mathbb{Z}/2\mathbb{Z}$& $0, 0, 0, 0, 0, 0, 0, 0, 0, \frac{1}{2}, \frac{1}{2}, 0, \frac{1}{2}, 0, \frac{1}{2} $  & $0$ \vspace{1mm}\\
    \vspace{1mm} &$ \mathbb{Z}/2\mathbb{Z}$& $0,0,0,0,0,0,0,0,\frac{1}{2},\frac{1}{2},0,0,0,0,0 $  & $1$ \vspace{1mm}\\
  $18$ &$ \mathbb{Z}/44\mathbb{Z}$ &  $\frac{1}{22},\frac{1}{44},\frac{43}{44},\frac{3}{11},\frac{6}{11},\frac{9}{11},\frac{5}{44},\frac{9}{22},\frac{31}{44},\frac{7}{22},\frac{29}{44},\frac{17}{44},\frac{17}{22},\frac{2}{11},\frac{13}{22}$ & $\frac{31}{44}$ \vspace{1mm}\\
\hline\\
\end{longtable}

\subsection*{A.3 Precise data for the proof of Theorem 5.1}

In this write-up, we show detailed data for $k\in \{6,9,13, 16, 17\}$ for the proof of Theorem 5.1 of [MN]  in Section 5.1.
For the detailed notations, see [MN].

\subsubsection*{\underline{Case of $k=6$}}

$T_6$ is $\langle F,(O),a_1,\dots,a_7,b_1,\dots,b_7\rangle_\mathbb{Z} $.
Since $\mathrm{det}(T_6)=-64$ and we have a section $O'$ with $2O'=O$, it follows $\left[\widehat{T}_6:T_6\right]=2 \text{ or } 4 \text{ or } 8$.

Suppose
$\left[\widehat{T}_6:T_6\right]=4$.
Then, there exists a section $R_1$ such that $4R_1 =O$.
$R_1$ must satisfy
\begin{align*}
(a_p\cdot (R_1)) = ( b_q\cdot (R_1))  =1, \quad \quad
p,q\in\{0,2,6\}
\end{align*}
except for $(p,q)=(0,0)$.
Set $T_6'=\langle T_6,(R_1)\rangle_\mathbb{Z} $ and 
put $((O)\cdot (R_1))=\kappa\in \mathbb{Z}_{\geq 0}.$
Then, 
\begin{align}
\mathrm{det}(T_6')=
\begin{cases}
32(5+4\kappa) & p=0 \text{ or } q=0, \\ 
64(1+2\kappa)  & \text{otherwise}.\\
\end{cases}
\end{align}
This is a contradiction.

Suppose
$\left[\widehat{T}_6:T_6\right]=8$.
Then, there exists a section $R_2$ with $8R_2 =O$.
This section must satisfy
$(a_p\cdot (R_1)) =( b_q\cdot (R_1)) =1$
where $p$ or $q$ is a odd number.
Set
$T_6''=\langle T_6, (R_2) \rangle_\mathbb{Z} $
and put $((O)\cdot (R_2))=\kappa\in \mathbb{Z}_{\geq 0}$.
Then, we calculate
\begin{align*}
{\rm det}(T_6'')=
\begin{cases}
 8(25+16\kappa) & (p,q)=(0,1),(1,0),(0,7),(7,0),\\ 
8(17+16\kappa) & (p,q)=(0,3),(3,0),(0,5),(5,0),\\
16(9+8\kappa) & (p,q)=(1,1),(1,7),(7,1),(7,7),\\
8(13+16\kappa) & (p,q)=(1,2),(2,1),(1,6),(6,1),(7,2),(2,7),(7,6),(6,7),\\
16(5+8\kappa)& (p,q)=(1,3),(3,1),(1,5),(5,1),(7,3),(3,7),(7,5),(5,7),\\
8(9+16\kappa)& (p,q)=(1,4),(4,1),(7,4),(4,7),\\
8(5+16\kappa)& (p,q)=(2,3),(3,2),(2,5),(5,2),(6,3),(3,6),(6,5),(5,6) ,\\
16(1+8\kappa)& (p,q)=(3,3),(3,5),(5,3),(5,5) ,\\
8(1+16\kappa)& (p,q)=(3,4),(4,3),(5,4),(4,5) .\\
\end{cases}
\end{align*}
This is a contradiction.

Hence, we have
$\left[\widehat{T}_6:T_6\right]=2$.

We set $\overline{T}_6=\langle F,(O),(O'),a_2,\dots,a_7,b_1,\dots,b_7\rangle_\mathbb{Z} $.
Then, $\mathrm{det}\left(\overline{T}_6\right)=-16$.
So,
we obtain $\overline{T}_6=\widehat{T}_6$.

Set
$E_6=\langle F,(O),(O'),(Q),a_2,\dots,a_7,b_1,\dots,b_7\rangle_\mathbb{Z} =\left\langle \overline{T}_6,(Q) \right\rangle_\mathbb{Z} $.
Since $\mathrm{det}(E_6)=16$,
$[\mathrm{NS}(S_6):E_6]=1 \text{ or } 2 \text{ or } 4$.

If we assume
$[\mathrm{NS}(S_6):E_6]=2$,
we have a section $Q_1'$ with $2Q'_1=Q$.
Such a $Q'_1$ satisfies
\begin{align*}
(a_p\cdot (Q'_1)) = 1, \quad  (b_q \cdot (Q'_1))  = 1, 
\quad\quad p,q \in \{ 1,5 \}.
\end{align*}
Set
$\widetilde{E}_6=\langle E_6,(Q'_1)\rangle_\mathbb{Z} $.
Then, we have
\begin{align*}
\mathrm{det}\left(\widetilde{E}_6\right)=
\begin{cases}
92   & (p,q)=(1,1),(5,5),\\ 
-16   & (p,q)=(1,5),(5,1).\\
\end{cases}
\end{align*}
This is a contradiction.

If we assume $\left[\widehat{T}_6:T_6\right]=4$,
there exists a section $Q_2'$ with $4Q'_2=Q$.
However, due to the group structure of $I_8$,
such a section cannot exist.

Hence, we have $[\mathrm{NS}(S_6):E_6]=1$.

\subsubsection*{\underline{Case   of  $k=9$}}

$T_9$ is $\langle F,(O),a_1,\dots,a_5,b_1,\dots,b_9\rangle_\mathbb{Z} $.
Set $E'_9=\langle T,(Q) \rangle_\mathbb{Z}$.
Since $\mathrm{det}(E'_9)=64$ and we have the section $O'$ with $2O'=O$,
$[\mathrm{NS}(S_9):E'_9]=2 \text{ or } 4 \text{ or } 8$.
Since
$\mathrm{det}(T_9)=-60$,
we have $\left[\widehat{T}_9:T_9\right]=2$.

Set $\overline{T}_9=\langle F,(O),(O'),a_1,\dots,a_5,b_1,\dots,b_8\rangle_\mathbb{Z} .$
Then, $|\mathrm{det}\left(\overline{T}_9\right)|=15=|\mathrm{det}\left(\widehat{T}_9\right)|$.
Hence, $\overline{T}_9=\widehat{T}_9$.

Set $E_9=\langle F,(O),(O'),(Q),a_1,\dots,a_5,b_1,\dots,b_8\rangle_\mathbb{Z} =\left\langle \overline{T}_9,(Q) \right\rangle_\mathbb{Z} $.
Since $\mathrm{det}(E_9)=16$,
$[\mathrm{NS}(S_9):E_9]=1 \text{ or } 2 \text{ or } 4.$
However, since ${{\rm NS} (S_9)} $ is an even lattice of rank $17$,
$[\mathrm{NS}(S_9):E_9]\not= 4.$

By observing the group structure of $I_6$ and $I_{10}$,
a section with  $Q'$ with $2Q'=Q$  cannot exist.

Suppose there exists a section $Q'$ with $2Q'=Q+O'$.
Set $\check{E}_9 = \langle (Q'), \overline{T}_9 \rangle_\mathbb{Z}$.
We have
\begin{align*}
|{\rm det} \left( \check{E}_9 \right)|
=
\begin{cases}
 26 \quad &( a_2 \cdot (Q')) = (b_4 \cdot (Q')) =1, \\
 14  \quad &( a_2 \cdot (Q')) = ( b_9  \cdot (Q')) =1,\\
 10 \quad &( a_5 \cdot (Q')) = ( b_4 \cdot (Q')) =1,\\
 74 \quad &( a_5 \cdot (Q')) = ( b_9 \cdot (Q')) =1.
\end{cases}
\end{align*}
This implies that such a section $Q'$ does not exist.

Hence, $[\mathrm{NS}(S_9):E_9]$ is not equal to $2$.
Therefore, $[\mathrm{NS}(S_9):E_9]=1$.

\subsubsection*{\underline{Case   of  $k=13$}}

$T_{13}$ is $\langle F,(O),a_1,\dots,a_5,b_1,c_1,\dots,c_7\rangle_\mathbb{Z} $.
Set $E'_{13}=\langle T,(Q) \rangle_\mathbb{Z}$.
Since
$\mathrm{det}(E'_{13})=-112$ and we have a section $O'$ with $2O'=O$,
$[\mathrm{NS}(S_{13}):E'_{13}]=2 \text{ or } 4$.

If we assume $\left[\widehat{T}_{13}:T_{13}\right]=4$,
there exists a section $R$ with $4R=O$.
Then,
\begin{align*}
(a_p\cdot(R) ) = (b_q\cdot(R)) = ( c_r\cdot(R))=1,\quad\quad
 p\in\{0,3\},q\in\{0,1\},r\in\{2,6\}
\end{align*}
Set
$T_{13}'=\langle T_{13},(R)\rangle_\mathbb{Z} $.
Put $((O)\cdot (R) =\kappa \in \mathbb{Z}_{\geq 0}$.
The discriminant of $T_{13}'$ is
\begin{align*}
\mathrm{det}(T_{13}')=
\begin{cases}
-48(5+4\kappa) & (p,q,r)=(0,0,2),(0,0,6),\\
-192(1+\kappa) & (p,q,r)=(0,1,2),(0,1,6),\\ 
-96(1+2\kappa) & (p,q,r)=(3,0,2),(3,0,6),\\
-48(1+4\kappa) & (p,q,r)=(3,1,2),(3,1,6).\\
\end{cases}
\end{align*}
It leads to a contradiction.

Setting $\overline{T}_{13}=\langle F,(O),(O'),a_1,\dots,a_4,b_1,c_1,\dots,c_7\rangle_\mathbb{Z} $
we have
$\mathrm{det}\left(\overline{T}_{13}\right)=24$.
Since $\mathrm{det}(T_{13})=96$,
we have $\overline{T}_{13}=\widehat{T}_{13}$.

Set
$E_{13}=\langle F,(O),(O'),(Q),a_1,\dots,a_4,b_1,c_1,\dots,c_7\rangle_\mathbb{Z} =\left\langle \overline{T}_{13},(Q) \right\rangle_\mathbb{Z} $.
We have
We have $\mathrm{det}(E_{13})=28$.
So, $[\mathrm{NS}(S_{13}):E_{13}]=1 \text{ or } 2.$

By observing the group structure of $I_6,I_2$ and $I_8$,
a section with  $Q'$ with $2Q'=Q$ cannot exist.

Suppose there exists a section $Q'$ with $2Q'=Q+O'$.
Then, there also exist sections $Q'+O'$ and $-Q'$.
Taking $Q_0' \in \{Q', Q'+O', -Q'\}$,
let us consider the group $\check{E}_{13}=\langle (Q_0'), \overline{T}_{13} \rangle_\mathbb{Z}$.
Since $\check{E}_{13}$ should give an overlattice of $E_{13}$,
the following calculation  for  $Q_0'$  leads a contradiction :
\begin{align*}
|{\rm det} \left( \check{E}_{13} \right)|
=
\begin{cases}
 15 \quad &(p,q,r)=(2,0,3), \\
 9  \quad &(p,q,r)=(4,0,1),\\
 41 \quad &( p,q,r) = (1,0,1),\\
 31 \quad &( p,q,r)=(5,0,3),
\end{cases}
\end{align*}
where $(p,q,r)$ means $(a_p \cdot (Q_0')) = (b_q \cdot (Q_0')) =(c_r \cdot (Q_0'))=1. $
Hence, such a section $Q'$ does not exist.

Therefore, $[\mathrm{NS}(S_{13}):E_{13}]=1$.

\subsubsection*{\underline{Case   of  $k=16$}}

$T_{16}$ is $\langle F,(O),a_1,\dots,a_4,b_1,\dots,b_9\rangle_\mathbb{Z} $.
Set $E_{16}=\langle T,(Q) \rangle_\mathbb{Z}$.
Since $\mathrm{det}(E_{16})=-20$, $[\mathrm{NS}(S_{16}):E_{16}]=1 \text{ or } 2$.

Suppose $\left[\widehat{T}_{16}:T_{16}\right]=2$.
Then, there exists a section $R$ with $2R=O$.
So, $R$ should intersect as
$(a_0\cdot(R) ) = ( b_5\cdot(R) ) =1.$
The discriminant of $T_{16}'=\langle T_{16},R\rangle_\mathbb{Z}$ is
$
\mathrm{det}(T_{16}')=-25(3+\kappa),
$
where $\kappa=((O) \cdot (R))\in \mathbb{Z}_{\geq 0}.$
Therefore,
$\left[\widehat{T}_{16}:T_{16}\right]=1$.

Suppose 
there exists a section $Q'$ with $2Q'=Q$.
So, $Q'$ satisfies
$
(a_1\cdot(Q')) = (b_2\cdot(Q')) =1 \text{ or }
(a_1\cdot(Q')) = ( b_4\cdot(Q')) =1.
$
The discriminant of $\widetilde{E}_{16}=\langle E_{16},(Q')\rangle_\mathbb{Z}$
is
\begin{align*}
\Big|\mathrm{det}\left(\widetilde{E}_{16}\right)\Big|=
\begin{cases}
30 & (a_1\cdot(Q')) = (b_2\cdot(Q')) =1,\\
34 & (a_1\cdot(Q')) = ( b_4\cdot(Q')) =1.
\end{cases}
\end{align*}
Thus, we have $[\mathrm{NS}(S_{16}):E_{16}]=1$.

\subsubsection*{\underline{Case   of  $k=17$}}

$T_{17}$ is $\langle F,(O),a_1,\dots,a_5,b_1,c_1,d_1,\dots,d_5\rangle_\mathbb{Z} $.
Set $E'_{17}=\langle T,(Q) \rangle_\mathbb{Z}$.
Since
$\mathrm{det}(E'_{17})=192$ and we have the section $O'$ with $2O'=O$,
$[\mathrm{NS}(S_{17}):E'_{17}]=2 \text{ or } 4 \text{ or } 8$.

Suppose
$\left[\widehat{T}_{17}:T_{17}\right]=4 \text{ or } 8$.
Then, there exists a section $R$ with $4R=O$ or $8R=O$.
However, due to the structure of $I_6$ and $I_2$,
such a section cannot exist.
Therefore, $\left[\widehat{T}_{17}:T_{17}\right]=2$.

Set $\overline{T}_{17}=\langle F,(O),(O'),a_1,\dots,a_4,b_1,c_1,d_1\dots,d_5\rangle_\mathbb{Z} $.
We have $\mathrm{det}\left(\overline{T}_{17}\right)=-36$.
Together with $\mathrm{det}(T_{17})=-144$,
it follows $|\mathrm{det}\left(\overline{T}_{17}\right)|=36$ and
$\overline{T}_{17}=\widehat{T}_{17}$.

Set
$E_{17}=\langle F,(O),(O'),(Q),a_1,\dots,a_4,b_1,c_1,d_1\dots,d_5\rangle_\mathbb{Z} =\left\langle \overline{T}_{17},(Q) \right\rangle_\mathbb{Z} $.
Since
$\mathrm{det}(E_{17})=48$,
$[\mathrm{NS}(S_{17}):E_{17}]=1 \text{ or } 2 \text{ or } 4$.
However, since ${{\rm NS} (S_{17})} $ is an even lattice of rank $15$,
$[\mathrm{NS}(S_9):E_9]\not= 4.$

By observing the group structure of $I_6$ and $I_2$,
a section with  $Q'$ with $2Q'=Q$ cannot exist.

Suppose there exists a section $Q'$ with $2Q'=Q+O'$.
It implies the existence of the sections $Q'+O'$ and $-Q'$.
For $Q_0' \in \{Q', Q'+O', -Q'\}$,
let us consider the group $\check{E}_{17}=\langle (Q_0'), \overline{T}_{17} \rangle_\mathbb{Z}$.
According to the following  calculation of ${\rm det} \left( \check{E}_{17} \right)$,
one can observe that  either
\begin{itemize}
\item $|{\rm det}\left(\check{E}_{17} \right)| \geq 48$ for every $Q_0' \in \{Q', Q'+O', -Q'\}$   
\end{itemize}
or
\begin{itemize}
\item there exist a certain  $Q_0' \in \{Q', Q'+O', -Q'\}$ such that $\check{E}_{17}$, which is determined by $Q_0'$,  cannot be an overlattice of $ E_{17}$
\end{itemize}
holds for each candidate $Q'$ satisfying $2Q'=Q+O'$:
\begin{align*}
|{\rm det} \left( \check{E}_{17} \right)|
=
\begin{cases}
 24 \quad &(p,q,r,s)=(5,0,0,2), (2,0,1,2), \\
 156  \quad &(p,q,r,s)=( 5,0,0,5),\\
 60 \quad & (p,q,r,s) =(5,1,1,2),\\
 48 \quad &( p,q,r,s) = (2,0,0,2), (2,0,1,5),(5,0,1,2),(5,1,1,5),\\
\end{cases}
\end{align*}
where $(p,q,r,s)$ means $(a_p \cdot (Q_0')) = (b_q \cdot (Q_0')) =(c_r \cdot (Q_0'))=(d_s\cdot (Q_0'))=1. $
Thus, we can see the nonexistence of such a section $Q'$.

Hence, $[\mathrm{NS}(S_{17}):E_{17}]\not=2$ and we have $[\mathrm{NS}(S_{17}):E_{17}] =1$.

\section*{Acknowledgment}
The authors sincerely appreciate the anonymous reviewer's  valuable comments.
They are helpful for the authors to correct the mathematical  argument and  improve the manuscript.
This second author is supported by JSPS Grant-in-Aid for Scientific Research (22K03226) and JST FOREST Program (JPMJFR2235).

{\small

}

\begin{center}
\hspace{7.7cm}\textit{Tomonao  Matsumura}\\
\hspace{7.7cm}\textit{Core Concept Technologies Inc.,}\\
\hspace{7.7cm}\textit{11F DaiyaGate Ikebukuro, }\\
\hspace{7.7cm}\textit{1-16-15 Minamiikebukuro, Toshima-ku, Tokyo}\\
\hspace{7.7cm}\textit{171-0022, Japan}\\
 \hspace{7.7cm}\textit{(E-mail: tomonaomatsumura@gmail.com)}
  \end{center}

\begin{center}
\hspace{7.7cm}\textit{Atsuhira  Nagano}\\
\hspace{7.7cm}\textit{Faculty of Mathematics and Physics,}\\
 \hspace{7.7cm} \textit{Institute of Science and Engineering,}\\
\hspace{7.7cm}\textit{Kanazawa University,}\\
\hspace{7.7cm}\textit{Kakuma, Kanazawa, Ishikawa}\\
\hspace{7.7cm}\textit{920-1192, Japan}\\
 \hspace{7.7cm}\textit{(E-mail: atsuhira.nagano@gmail.com)}
  \end{center}


\begin{thebibliography}{30}

\bibitem[B1]{BToric} V. V. Batyrev,
\newblock{\em Toroidal Fano $3$-folds,}
\newblock{Math. USSR Izv. {\bf 19} (1982), 13-25.}

\bibitem[B2]{B} V. V. Batyrev,
\newblock{\em Dual polyhedra and mirror symmetry for Calabi-Yau hypersurfaces in toric varieties,}
\newblock{J. Alg. Geom. {\bf 3} (1994), 493-535.}

\bibitem[B3]{BToric4} V. V. Batyrev,
\newblock{\em On the classification of toric Fano $4$-folds,}
\newblock{J. Math. Sci. {\bf 94} (1999), 1021-1050.}

\bibitem[BFS]{BFS} L. J. Billera, P. Filliman and B. Sturmfels,
\newblock{\em Constructions and complexity of secondary polytopes,}
\newblock{Adv. Math. {\bf 83} (1990), 155-179.}

\bibitem[CLS]{CLS} D. A. Cox, J. B. Little and H. K. Schenck, 
\newblock{\em Toric varieties,}
\newblock{Amer. Math. Soc., 2010.}

\bibitem[DG]{DG} E. Dardanelli and B. van Geemen, 
\newblock{\em Hessians and the Moduli Space of Cubic Surfaces,}
\newblock{Contemp. Math. {\bf 422}  (2007), 17-36.}

\bibitem[D]{D} I. Dolgachev,
\newblock{\em Mirror symmetry for lattice polarized $K3$ surfaces,}
\newblock{J. Math. Soc. {\bf 81} (1996), 2599-2630.}

\bibitem[GKZ]{GKZ} I. Gel'fand, M. Kapranov and M. Zelevinski,
\newblock{\em Discriminants, resultants and multidimensional determinants,}
\newblock{Birkh\"auser, 2008.}

\bibitem[IIT]{IIT}
N. Inoue, T. Ishige and A. Takayasu,
\newblock{\em Rigorous numerics
for finding the monodromy of hypergeometric differential equations,}
\newblock {arXiv:2501.03792v1.}

\bibitem[I]{I}
 T. Ishige,
\newblock{\em Monodromy of Picard-Fuchs system for a family of K3 toric hypresurfaces and fixed points of the Hilbert modular group for $\mathbb{Q}(\sqrt{2})$,}
\newblock {Euro. J. Math. {\bf 10} (2024),  article number 72.}

\bibitem[Kob]{Kob} M. Kobayashi,
\newblock{\em Duality of Weights, mirror symmetry and Arnold's strange duality,}
\newblock{Tokyo J. Math. {\bf 31}  (2008), 225-251.}


\bibitem[Kod]{Kod} K. Kodaira,
\newblock{\em On compact analytic surfaces II,}
\newblock{Ann. of Math. {\bf 77} (1963), 563-626.}

\bibitem[Koi1]{K2} K. Koike,
\newblock{\em K3 surfaces induced from polytopes (in Japanese),}
\newblock{Master Thesis, Chiba Univ., 1998.}

\bibitem[Koi2]{KHes} K. Koike,
\newblock{\em Hessian K3 surfaces of non-Sylvester type,}
\newblock{J. Alg. {\bf 330} (2011), 388-403.}

\bibitem[L]{Lafforgue} L. Lafforgue,
\newblock{\em Chirurgie des grassmanniennes,}
\newblock{CRM Monograph  {\bf 19}, 2003.}


\bibitem[Mas]{Mase} M. Mase,
\newblock{\em Families of $K3$ surfaces in smooth Fano $3$-folds,}
\newblock{Comm. Math. Univ. St. Pauli {\bf 61} (2012), 103-114.}

\bibitem[Mat1]{M1} T. Matsumura,
\newblock{\em Mirror symmetry for K3 surfaces derived from Fano polytopes (in Japanese),}
\newblock{Master Thesis, Kanazawa Univ., 2022.}

\bibitem[Mat2]{M2} T. Matsumura,
\newblock{\em On mirror symmetry for K3 surfaces from $3$-dimensional Fano polytopes (in Japanese),}
\newblock{Abstract of The 18th Mathematics Conference for Young Researchers,\\
URL: \url{https://www.math.sci.hokudai.ac.jp/~wakate/mcyr/2022/pdf/matsumura_tomonao.pdf}, 2022.}

\bibitem[Moi]{Mo} B. G. Mo{\u\i}\v{s}ezon,
\newblock{\em Algebraic homology classes on algebraic varieties,}
\newblock{Math. USSR Izv. {\bf 1} (1968), 209-251.}

\bibitem[Na1]{Na} A. Nagano,
\newblock{\em Period differential equations for the families of $K3$ surfaces with $2$ parameters derived from the  reflexive polytopes,}
\newblock{Kyushu J. Math. {\bf 66}  (2012), 193-244.}

\bibitem[Na2]{NaC} A. Nagano,
\newblock{\em Icosahedral invariants and a construction of class fields via periods of $K3$ surfaces,}
\newblock{Ramanujan J. {\bf 46}  (2018), 201-227.}

\bibitem[Na3]{NaP} 
A. Nagano,
\newblock {\em Inverse period mappings of $K3$ surfaces and
a construction of modular forms for a lattice with the Kneser conditions,}
\newblock {J. Alg. {\bf 565}  (2021), 33-63.}

\bibitem[Ne]{Ne} A. N\'eron,
\newblock{\em  Mod\`eles minimaux des vari\'et\'es ab\'eliennes sur les corps locaux er globaux,}
\newblock{Publ. Math. Inst. Hautes \'Etudes Sci. {\bf 21}  (1964), 5-128.}

\bibitem[Ni]{Ni} 
V. V. Nikulin,
\newblock {\em Integral symmetric bilinear forms and some of their applications,}
\newblock {Izv. Akad. Nauk SSSR {\bf 43}  (1979), 111-177.}

\bibitem[NS]{NS} H. Narumiya and N. Shiga,
\newblock{\em The mirror map for a family of $K3$ surfaces induced from the simplest $3$-dimensional reflexive polytope,}
\newblock{ CRM Proc. Lecture Notes {\bf 30}  (2001), 139-161.}

\bibitem[NT]{NT} H. Nakayama and N. Takayama,
\newblock{\em Computing differential equations for integrals associated to smooth Fano polytope,}
\newblock{ Japan J. Ind. Appl. Math. {\bf 30}  (2013), 307-319.}

\bibitem[{\O}b]{Ob} M. {\O}bro,
\newblock{\em Classification of smooth Fano polytopes,}
\newblock{PhD Thesis, Aarhus Univ., 2008.}

\bibitem[Od]{Od} T. Oda,
\newblock{\em Convex bodies and algebraic geometry,}
\newblock{Springer, 1985.}

\bibitem[R]{R} F. Rohsiepe,
\newblock{\em Lattice polarized toric K3 surfaces,}
\newblock{arXiv: hepth/0409290v1.}

\bibitem[Sa]{Sa} 
H. Sato,
\newblock {\em Toward the classification of higher-dimensional toric Fano varieties,}
\newblock {Tohoku Math. J. {\bf 52}  (2000), 383-413.}

\bibitem[Sh]{Shioda} 
T. Shioda,
\newblock {\em On the Mordell-Weil lattices,}
\newblock {Comm. Math. Univ. St. Pauli {\bf 39}  (1990), 211-240.}

\bibitem[WW]{WW} K. Watanabe and M. Watanabe,
\newblock{\em The classification of Fano $3$-folds with torus embeddings,}
\newblock{Tokyo J. Math. {\bf 5}   (1982),  37-48.}

\end{thebibliography}
\end{document}